\newcommand{\R}{\mathbb{R}}
\newcommand{\N}{\mathcal{N}}
\newcommand{\D}{\mathcal{D}}
\newcommand{\xb}{\mathbf{x}}
\renewenvironment{proof}[1][Proof]{\noindent\textit{#1. } }{\hfill$\square$}
\newtheorem{theorem}{Theorem}[section]
\newtheorem{lemma}{Lemma}[section]
\newtheorem{definition}[theorem]{Definition}
\newtheorem{example}{Example}[section]
\newtheorem{remark}{Remark}[section]
\newtheorem{corollary}{Corollary}[section]
\numberwithin{equation}{section}
\def\L{\mathcal{L}}
\newcommand{\JW}[1]{{#1}}
\newcommand{\YS}[1]{{#1}}
\title{{Asymptotic Compatibility of the Approximate-Ball Finite Element Method for 2D Nonlocal Poisson Problem with Neumann Boundary Conditions}
\thanks{This work was supported by the NSFC (No. 12571438), 
the Experiment Technology Project Funds of Wuhan University (No. WHU-2022-SYJS-0002).}}
\author{
Yuchen Shi\thanks{
			School of Mathematics and Statistics, Wuhan University, Wuhan 430072, China (yuchen\_shi@whu.edu.cn).}
			\and Jihong Wang\thanks{School of Mathematics and
				Statistics, Huazhong University of Science and Technology, Wuhan
				430074, China
				(jhwang00519@hust.edu.cn).}
				\thanks{Hubei Key Laboratory of Engineering Modeling and scientific Computing, Huazhong University of science and Technology, Wuhan 430074, China}
		\and Jiwei Zhang\thanks{
			School of Mathematics and Statistics, and Hubei Key Laboratory of Computational Science, Wuhan University, Wuhan 430072, China (jiweizhang@whu.edu.cn). }
}
\begin{document}
\maketitle
\begin{abstract}

In this paper, asymptotic compatibility error estimates of a finite element discretization is presented for 2D nonlocal Poisson problems with Neumann boundary conditions. To this end, we begin with deriving two kind of nonlocal Neumann boundary operators based on nonlocal Green’s identities, and establish the corresponding weak convergence to the classical Neumann operator as the horizon parameter $\delta$ vanishes for general convex domains. After that, we consider the asymptotic  properties (i.e. the so-called local limit) of two nonlocal Neumann boundary-value problems as $\delta$ approaches zero. Finally, we analyze the  asymptotical  compatable error estimates of the approximate-ball-strategy finite element discretization proposed in \cite{Cookbook}, and provide numerical examples to confirm the theoretical results.

\end{abstract}
\section{Introduction}
Over the past decades, nonlocal models have attracted growing attention due to their broad applications in material mechanics \cite{SILLING2000175,BaJiNonlocal,Dunonlocal,LocalNonlocalCoup,Chen2017AsymptoticallyCS,DElia2019ARO,CAO2023120}, stochastic processes \cite{zhao2016generalized,d2017nonlocal}, image processing \cite{gilboa2009nonlocal,tao2018nonlocal,d2020bilevel,you2022nonlocal}, traffic flow modeling \cite{huang2022stability,huang2024asymptotic}, etc. A fundamental challenge in these problems lies in the formulation of nonlocal boundary conditions, which are essential for ensuring well-posedness, conservation, and numerical stability. Unlike local models, nonlocal operators inherently couple interior and exterior regions through volumetric interactions, making boundary treatment crucial for maintaining mass balance, energy consistency, and flux continuity. While the nonlocal Dirichlet problem has been extensively studied, including the well-posedness, asymptotic compatibility, and convergence of numerical schemes \cite{tian2014asymptotically,Du2013APE,du2019conforming,yang2022uniform,GanDuShi,Foss2021ConvergenceAA,Cookbook,DuXieYin2022,du2024errorestimatesfiniteelement}, the nonlocal Neumann problem remains comparatively underexplored \cite{TAO2017282,DuZhZh,Yu2020,DeTianYu,FreVolVu,Shi2024nonlocal,DuTianZhou2023}. The main difficulty stems from the absence of a natural nonlocal analogue of the classical normal derivative, which makes it nontrivial to define Neumann-type operators that preserve conservation laws and remain asymptotically compatible with the local normal derivative.

There are mainly two kinds of ways to construct the Neumann boundary operators. The first way is to modify the nonlocal operator using boundary information and adjusting the body force term \cite{TAO2017282,Yu2020,Shi2024nonlocal}. For instance, a new nonlocal Neumann operator in \cite{Yu2020}  is provided  by means of a Taylor expansion along the tangential and normal directions, and  the body force is modified  by adding a term with the information of the local Neumann term. The second way is to directly construct nonlocal Neumann operator through nonlocal Green's first identity by comparing with local Green's identity \cite{ZhengDuMaZhang,DuTianZhou2023}.  Such a construction of the nonlocal Neumann operator is more consistent with nonlocal calculus fundamentals and imposes fewer geometric restrictions, enhancing versatility for complex domains. Recently, the asymptotic convergence to the local normal derivative in the weak sense is discussed in \cite{DuTianZhou2023}. The analysis of weak convergence in \cite{DuTianZhou2023} requires test functions belong to $C^3$. In this paper, we will reduce the regularity requirement of the test space to $C^2$,  and present the weak convergence in general two-dimensional domains.

After that, we focus on the local limit behaviors between  the nonlocal solutions and local solutions to the corresponding Neumann boundary problems as the horizon parameter $\delta$ vanishes. Many studies are carried out to consider this local limit of the nonlocal Neumann problems. For instance, Tao et al. \cite{TAO2017282} present the variational analysis  to prove the local limit as $\delta\rightarrow0$. Later, Du, Zhang and Zheng \cite{DuZhZh} revisit the nonlocal maximum principle and establish a sharp estimate on the asymptotical behavior of nonlocal Neumann problem in 1D. D'Elia et al.  \cite{DeTianYu} propose an efficient and flexible method to convert boundary data into volumetric constraints, and prove that the nonlocal solutions converge to the corresponding local solution in second–order rate with respect to $\delta$. One can refer to  \cite{Yu2020,Shi2024nonlocal,parks2025nonlocal} for more related works on local limits of the nonlocal Neumann problems. In this paper, we focus on two representative nonlocal Neumann problems (inner Neumann problem and outer Neumann problem) whose nonlocal Neumann operators are derived from nonlocal Green's identity, and investigate the second-order convergence of their solutions to the corresponding local Neumann solutions as $\delta\rightarrow 0$.


On the other hand, the concept of asymptotic compatible (AC) proposed in \cite{tian2014asymptotically} plays an important role to study the local limit when numerical methods are used to  correctly simulate the solutions of both nonlocal and local problems.  This is to say, the AC scheme can guarantee that  the numerical solutions of nonlocal problems can converge to the correct solution of the corresponding local problems as both $\delta$ and mesh size $h$ vanish. Tian et al. \cite{tian2014asymptotically} proves that the corresponding Galerkin finite element scheme is asymptotically compatible once the finite element space includes continuous piecewise linear functions. This result is based on  the exact integration of the nonlocal interaction neighborhoods $B_\delta(\mathbf{x})$. In the practical simulations  in \cite{DuXieYin2022, du2024errorestimatesfiniteelement} for 2D problems, the method using linear finite element is only a conditionally AC scheme under the condition $\delta = o(h)$ by using the recently developed Nocaps strategy in \cite{Cookbook}. As analyzed in \cite{DuXieYin2022, du2024errorestimatesfiniteelement}, the main reason is that it is hard to exactly compute integral over the interaction domain $B_\delta(\mathbf{x})$, we generally need to approximate the intersection between the mesh and $B_\delta(\mathbf{x})$. Thus, the error arose from the geometric approximation results in the finite element not to preserve the asymptotic compatibility.  Du et al.  \cite{DuXieYin2022, du2024errorestimatesfiniteelement} provide the detained analysis of the conditional AC  property for the approximate-ball-based finite element method for the nonlocal diffusion problem with Dirichlet boundary conditions. The AC analysis of nonlocal finite element schemes remains incomplete by using the approximate ball strategy to solve Neumann boundary problems. 

The focus of this paper is on developing an AC theory for approximate-ball-based finite element method for nonlocal problems with Neumann boundary conditions.
To the end, we present an error estimation of the inner and outer Neumann problems consisting of the model error, the finite element error and the approximate ball error by using the nonlocal Poincar\'e inequality. Numerical experiments are provided to verify the effectivess of the AC error estimate by using the Nocaps finite element method  and high-precision Approxcaps method. Numerical results are aligned with those in  \cite{du2024errorestimatesfiniteelement}. 

 The rest of this paper is organized as follows. In section 2, we first introduce the Neumann operator and the energy space, and consider the weak convergence properties of between nonlocal versus local Neumann boundary condition operators under $C^2$ space. 
 In section 3, we introduce nonlocal Neumann problems, and the corresponding local limit is studied. In section 4, we focus on investigating the geometric errors introduced by the approximate ball strategy in finite element discretization, and provide the corresponding AC  error estimate.  In section 5,  numerical experiments are provided to verify our theoretical findings. Conclusions are given in section 6 to end this paper.

\section{Nonlocal Neumann operators}
We begin with the nonlocal operator defined as 
 \begin{equation}\label{nloperator}
		\L_\delta u(\mathbf{x}):=\int_{\mathbf{y}\in  B_\delta(\mathbf{x})}[u(\mathbf{x})-u(\mathbf{y})]\gamma_\delta(\mathbf{x},\mathbf{y})d\mathbf{y},\quad \mathbf{x}\in \Omega\subset \R^d,
	\end{equation}
where $\Omega$ is an open nonempty bounded domain,  $B_\delta(\mathbf{x})$ is a ball  centered at point $\mathbf{x}$ with radius $\delta$. We then  introduce the following open sets:
\begin{eqnarray*}
&&\Omega^c=\{\mathbf{x}\in\R^d: \mathbf{x}\not\in\bar{\Omega}\},\quad\Omega^-=\{\mathbf{x}\in \Omega: B_\delta(\mathbf{x})\subset \Omega\},\quad\Omega_{\gamma}^-=\Omega\backslash\bar{\Omega}^-,\\
&&\Omega^+=\R^d\backslash\overline{\{\mathbf{x}\in \R^d: B_\delta(\mathbf{x})\cap\Omega=\emptyset\}},\quad\Omega_{\gamma}^+=\Omega^+\backslash\bar{\Omega}.
\end{eqnarray*}
Obviously, it holds that $\Omega^-\subset\Omega\subset\Omega^+$, $\Omega_\gamma^-=(\Omega^c)_\gamma^+,\quad
\Omega_\gamma^+=(\Omega^c)_\gamma^-.
$ 

The kernel function $\gamma_\delta$ we discussed in the paper is defined as follows:
\begin{definition}\label{kerneldef}
A ($d$-dimensional) kernel function $\gamma_\delta$ is referred to as a distribution on $\R^d\times \R^d$ with the following properties:
\begin{enumerate}
\item There exists $\widetilde{\gamma}_\delta(r)\in L^1(0,\delta)$, $\gamma_\delta(\mathbf{x},\mathbf{y})=\widetilde{\gamma}_\delta(|\mathbf{x}-\mathbf{y}|)$ for all $\mathbf{x},\mathbf{y}\in \R^d$. 
\item $\gamma_\delta(\mathbf{x},\mathbf{y})\ge 0$ for all $\mathbf{x},\mathbf{y}\in \R^d$;
\item The parameter $\delta$ of the kernel, referred to as the horizon, is positive, such that
$$
\gamma_\delta(\mathbf{x},\mathbf{y})=0,\quad\mathrm{if}\quad |\mathbf{x}-\mathbf{y}|>\delta,\quad \mathbf{x},\mathbf{y}\in \R^d;
$$
\item \JW{$\gamma_\delta$ has a bounded second order moment independent with the horizon $\delta$. Specifically, in two-dimensional, by using the polar coordinate transform, this implies that
\begin{align}\label{second moment}
\frac{1}{2}\int_{\R^2} \mathbf{x}\otimes \mathbf{x} \gamma_\delta(|\mathbf{x}|)d\mathbf{x}=&\frac{1}{2}\int_{\R^2} \left(\begin{array}{cc}
x_1^2  & x_1x_2  \\
x_1x_2 & x_2^2 
\end{array}\right) \gamma_\delta(|\mathbf{x}|)d\xb\nonumber\\=&\frac{1}{2}\int_{0}^{\delta}\int_{0}^{2\pi}r^3\widetilde{\gamma}_\delta(r)\left(\begin{array}{cc}
\cos^2(\theta)  & \cos(\theta)\sin(\theta)  \\
\cos(\theta)\sin(\theta) & \sin^2(\theta) 
\end{array}\right)d\theta dr \nonumber\\=&\left(\frac{\pi}{2}\int_{0}^\delta r^3 \widetilde{\gamma}_\delta(r) d r \right) I:=\sigma I.
\end{align}
}
\end{enumerate}
\end{definition}


%
For the local situation, the classical Neumann operator $\partial_\mathbf{n}$ can be derived by using the Green's first identity, i.e.,
\begin{equation*}
(-\triangle u, v )_\Omega = (\nabla u, \nabla v)_\Omega - 
\langle\partial_\mathbf{n} u, v\rangle_{\partial\Omega}. 
\end{equation*}
The nonlocal Neumann operators associated with nonlocal operator (\ref{nloperator}) can be similarly derived by 
\begin{eqnarray}\label{eqn1.3}
(\L_\delta u,v)_{\Omega}
&=&A_{\Omega,\Omega}(u,v)-(\N_{\Omega}^- u,v)_{\Omega_{\gamma}^-}\\
 \label{eqn1.4}&=&A_{\Omega,\Omega}(u,v)+2A_{\Omega_{\gamma}^-,\Omega_{\gamma}^+}(u,v)-(\N_{\Omega}^+ u,v)_{\Omega_{\gamma}^+}
\\ \label{eqn1.5}&
=&A_{\Omega^+,\Omega^+}(u,v)-(\N_{\Omega}^{*} u,v)_{\Omega_{\gamma}^+},  \label{intbypartN++}
\end{eqnarray}
where we have set
\begin{align}
&A_{\Omega_1,\Omega_2}(u,v)
=\frac{1}{2}\int_{\mathbf{x}\in\Omega_1}\int_{\mathbf{y}\in\Omega_2}[u(\mathbf{x})-u(\mathbf{y})][v(\mathbf{x})-v(\mathbf{y})]\gamma_\delta(\mathbf{x},\mathbf{y})d\mathbf{y}d\mathbf{x},\\
&\N_{\Omega}^- u(\mathbf{x})
=-\int_{\mathbf{y}\in\Omega_{\gamma}^+}[u(\mathbf{x})-u(\mathbf{y})]\gamma_\delta(\mathbf{x},\mathbf{y})d\mathbf{y},\quad \mathbf{x}\in \Omega_{\gamma}^-, \label{innbc}\\
&\N_{\Omega}^+ u(\mathbf{x})
=\int_{\mathbf{y}\in\Omega_{\gamma}^-}[u(\mathbf{x})-u(\mathbf{y})]\gamma_\delta(\mathbf{x},\mathbf{y})d\mathbf{y},\quad \mathbf{x}\in \Omega_{\gamma}^+, \label{outnbcfw}\\
&\N_{\Omega}^{*} u(\mathbf{x})=\int_{\mathbf{y}\in\Omega^+}[u(\mathbf{x})-u(\mathbf{y})]\gamma_\delta(\mathbf{x},\mathbf{y})d\mathbf{y},\quad \mathbf{x}\in \Omega_{\gamma}^+. \label{outnbc}
\end{align}
It can be observed that $A_{\Omega,\Omega},~ A_{\Omega,\Omega}+2A_{\Omega_{\gamma}^-,\Omega_{\gamma}^+}$ and $A_{\Omega^+,\Omega^+}$ are all symmetric bilinear functionals. This symmetry will be essential for the well-posedness of the nonlocal Neumann problem. \YS{Although the above definitions are valid in arbitrary spatial dimensions $d\ge 1$, 
we will henceforth restrict our attention to the two-dimensional case throughout this paper.}

The nonlocal Green's first identity (\ref{eqn1.3})-(\ref{eqn1.5}) can result in three different  nonlocal Neumann operators, i.e., $\mathcal{N}_\Omega^+$, $\mathcal{N}_\Omega^-$ and $\mathcal{N}_\Omega^*$. It is natural to ask  if these nonlocal Neumann operators converge to the classical normal derivative $\partial_\mathbf{n}$? Moreover, what is the convergence rate with respect to $\delta$ as $\delta$ tends to zero?  
For the next part of this section, we consider the local limit of nonlocal Neumann operators in two-dimensional, i.e., the asymptotically compatible analysis, by taking  $\mathcal{N}_\Omega^-$ for an example first and then $\mathcal{N}_\Omega^*$. 
\YS{To investigate the properties of the Neumann boundary operators, we define the following space:
 \begin{equation}
    S_N(\Omega) := \left\{ u \mid \sup_{x \in \Omega_\gamma^-} \left\{ |\mathcal{N}_{\Omega} u(x)| \right\} < \infty \right\},
\end{equation}
where $\N_{\Omega}$ represents the three types of nonlocal Neumann operators mentioned in (\ref{innbc})--(\ref{outnbc}). Like for the inner Neumann operator, the space are be noted as $S_N^-$.
}

\YS{Our target in the next of this section is to rigorously demonstrate the convergence result of the nonlocal Neumann operator proposed in (\ref{innbc})--(\ref{outnbc}) to the local norm derivative in the weak sense and in two-dimensional. The sketch of the proof will be devided in to two parts. In the first part , We will introduce our variable substitution strategy on a half-plane and provide a proof of the weak convergence property of the inner Neumann operator. Nextly, in the second part, the methology was extended to the convex polygons and the convex region with $C^2$ boundary for all three types of nonlocal Neumann boundary operators.}


\subsection{\YS{Local limit in the half-plane}}
\begin{figure}[h]
    \centering
    \includegraphics[width=80mm]{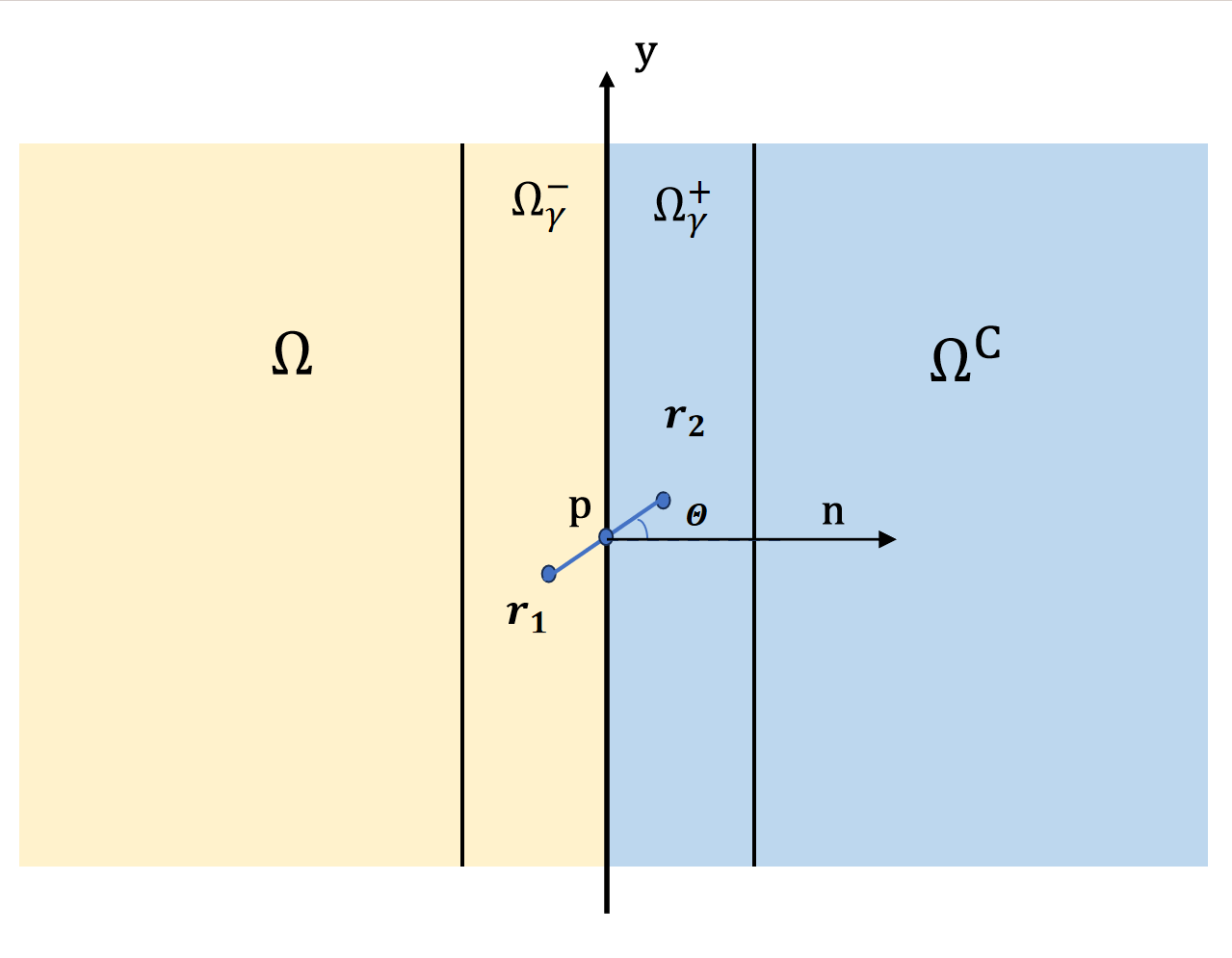}
    \caption{Schematic of the variable substitution}
    \label{fig1new:env}
 \end{figure}
 To make clear how the nonlocal Neumann operators converge to the correct local Neumann operator $\partial_\mathbf{n}$, we first consider the local limit in the half-plane domain. Taking $\Omega=(-\infty,0)\times(-\infty,+\infty)$, it has that  $\Omega_\gamma^{-}=(-\delta,0)\times(-\infty,+\infty)$ and $\Omega_\gamma^{+}=(0,\delta)\times(-\infty,+\infty)$.
 Let $\mathbf{r}_1=(x_1,y_1)\in\Omega_\gamma^{-}$ and $\mathbf{r}_2=(x_2,y_2)\in\Omega_\gamma^{+}$ be two points with $\|\mathbf{r}_2-\mathbf{r}_1\|<\delta$ for some $\delta>0$. Let $\mathbf{p}$ denote the intersection of the line segment $\overline{\mathbf{r}_1\mathbf{r}_2}$ with the boundary $\partial\Omega$. To better depict the relationship among $\mathbf{r}_1$, $\mathbf{r}_2$  and $\mathbf{p}$ (see a diagram in Figure \ref{fig1new:env}), we introduce the following variable transformation:
    \begin{align}\label{variable2}
        x_1 = -t\cos(\theta),  \; y_1 = \eta-t\sin(\theta); \; \quad x_2 = s \cos(\theta),\; y_2 = \eta+s \sin(\theta);
    \end{align}
where $\eta \in (-\infty, +\infty)$ is the $y$-coordinate of $\mathbf{p}$; \YS{$s \in (0, \delta)$ denotes the distance between $\mathbf{r}_2$ and $\mathbf{p}$ and $t \in (0, \delta - s)$ denotes the distance between $\mathbf{r}_1$ and $\mathbf{p}$};
$\theta\in (-\pi/2, \pi/2)$ is the counterclockwise angle between the line and the outer normal vector $\mathbf{n}(\mathbf{p}) = (1,0)^T$.  The corresponding Jacobian matrix is given by 
\begin{equation}\label{detJhalf}
\det(J) = \left|\frac{\partial(x_1,x_2,y_1,y_2)}{\partial(\eta,\theta,s,t)}\right|
= \left|\begin{array}{cccc}
0 & t\sin(\theta) & 0 & -\cos(\theta) \\
0 & -s\sin(\theta) & \cos(\theta) &0\\
1 & -t\cos(\theta)  & 0& -\sin(\theta)  \\
1 & s \cos(\theta)  & \sin(\theta) & 0
\end{array}\right|=(s+t)\cos(\theta). 
\end{equation}

\begin{theorem}\label{oneonone}
Assume $\Omega$ is a half plane with boundary $\Gamma = \{0\}\times(-\infty, +\infty)$, if $u \in C^2(\Omega_\gamma^+\cup\Omega_\gamma^-) \cap S_N^-(\Omega) $ and $v \in C_0^1(\Omega_\gamma^+\cup\Omega_\gamma^-)$, where the size of the support of $v$ is independent of $\delta$, then it holds 
    
\begin{equation}\label{resulthalfplanar}
    (\N_{\Omega}^- u,v)_{\Omega_{\gamma}^-}=\langle\sigma\partial_\mathbf{n} u,v\rangle_{\Gamma}+\mathcal{O}(\delta).
\end{equation}

\end{theorem}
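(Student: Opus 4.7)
The plan is to transform $(\N_\Omega^- u, v)_{\Omega_\gamma^-} = \iint_{\Omega_\gamma^- \times \Omega_\gamma^+}[u(\mathbf{r}_2) - u(\mathbf{r}_1)]\widetilde{\gamma}_\delta(|\mathbf{r}_2 - \mathbf{r}_1|)v(\mathbf{r}_1)\,d\mathbf{r}_2\,d\mathbf{r}_1$ into the boundary-foot coordinates of (\ref{variable2}). Since $\mathbf{r}_2 - \mathbf{r}_1 = (s+t)(\cos\theta, \sin\theta)$, the kernel simplifies to $\widetilde{\gamma}_\delta(s+t)$, the support condition $|\mathbf{r}_2-\mathbf{r}_1|<\delta$ becomes $s+t < \delta$, and by (\ref{detJhalf}) the volume element is $(s+t)\cos\theta\,d\eta\,d\theta\,ds\,dt$ on the region $\eta \in \mathbb{R}$, $\theta \in (-\pi/2, \pi/2)$, $\{s, t > 0 : s+t < \delta\}$. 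The compact support of $v$ restricts $\eta$ to a bounded interval, making Fubini applicable throughout.

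Next I would Taylor-expand both factors about $\mathbf{p} = (0, \eta) \in \Gamma$. Since $u \in C^2$ near $\Gamma$,
\[
u(\mathbf{r}_2) - u(\mathbf{r}_1) = (s+t)\bigl[\cos\theta\,\partial_\mathbf{n} u(\mathbf{p}) + \sin\theta\,\partial_{y} u(\mathbf{p})\bigr] + R_u,\qquad |R_u| \le C(s+t)^2,
\]
and $v(\mathbf{r}_1) = v(\mathbf{p}) + R_v$ with $|R_v| \le C(s+t)$. Substituting these into the transformed integral, the leading term factors into an angular integral $\int_{-\pi/2}^{\pi/2}\cos\theta\,(\cos\theta\,\partial_\mathbf{n} u + \sin\theta\,\partial_y u)\,d\theta$, whose tangential piece vanishes by the oddness of $\sin\theta\cos\theta$ and whose normal piece equals $(\pi/2)\partial_\mathbf{n} u(\mathbf{p})$, together with the radial integral
\[
\int_0^\delta\!\!\int_0^{\delta - s}(s+t)^2\widetilde{\gamma}_\delta(s+t)\,dt\,ds = \int_0^\delta r^3\widetilde{\gamma}_\delta(r)\,dr
\]
obtained via $r = s+t$ on the triangle $\{0<s<r<\delta\}$. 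Combining these reproduces exactly $\sigma$ from (\ref{second moment}), yielding the leading contribution $\langle \sigma\partial_\mathbf{n} u, v\rangle_\Gamma$.

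For the remainder, each of $R_u\,v(\mathbf{p})$, the leading $u$-term paired with $R_v$, and $R_u R_v$ carries at least one extra factor of $(s+t)$ compared with the leading integrand, so each is dominated by a constant multiple of $\int_0^\delta r^4\widetilde{\gamma}_\delta(r)\,dr \le \delta\int_0^\delta r^3\widetilde{\gamma}_\delta(r)\,dr = O(\delta)$, with the residual $\eta$- and $\theta$-integrations finite by the compact support of $v$ and the boundedness of $\cos\theta$.

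The main obstacle will be obtaining this $O(\delta)$ bound under only $C^2$ regularity of $u$, as opposed to the $C^3$ hypothesis in \cite{DuTianZhou2023}. Specifically, the tangential-derivative term must be eliminated already at the first-order Taylor level via the angular symmetry $\int\sin\theta\cos\theta\,d\theta = 0$, rather than being deferred to a second-order expansion; correspondingly, $R_u$ can only be controlled through the integral form of the $C^2$ Taylor remainder, not through a third-derivative pointwise bound. A minor technical point is that $\mathbf{p} \in \Gamma$ lies outside $\Omega_\gamma^+ \cup \Omega_\gamma^-$, but the zeroth-order values $u(\mathbf{p})$ cancel in $u(\mathbf{r}_2) - u(\mathbf{r}_1)$ and the first-order coefficients $\nabla u(\mathbf{p})$ are boundary traces of the $C^2$ extensions from either side, so the expansion is unobstructed.
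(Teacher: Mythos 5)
Your proposal follows essentially the same path as the paper's proof: change to the boundary-foot coordinates of \eqref{variable2}, Taylor-expand $u$ to first order and $v$ to zeroth order about $\mathbf{p}$ with $C^2$-type remainders, extract the leading term via the angular cancellation $\int_{-\pi/2}^{\pi/2}\sin\theta\cos\theta\,d\theta=0$ and the radial substitution $r=s+t$ to recover $\sigma$, and bound every remainder by $\int_0^\delta r^4\widetilde{\gamma}_\delta(r)\,dr\le\delta\int_0^\delta r^3\widetilde{\gamma}_\delta(r)\,dr=\mathcal{O}(\delta)$. The only cosmetic difference is that the paper writes the $u$-remainder in Lagrange form $s^2[u(\theta,\xi)]_\mathcal{H}-t^2[u(\theta,\zeta)]_\mathcal{H}$ and organizes the product as four explicit pieces $I_1+I_2-I_3-I_4$, whereas you package the same estimates as $R_u$, $R_v$, and their cross term.
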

\begin{proof}
Set $V_c(\theta):=(\cos(\theta),\sin(\theta))^T$ and 
      $   [u(\theta,\mathbf{p})]_\mathcal{H}:=(\cos(\theta),\sin(\theta)) \nabla^2u(\mathbf{p})(\cos(\theta),\sin(\theta))^T.$  Noting that $u\in C^2(\Omega_\gamma^+\cup\Omega_\gamma^-) $ and $v \in C^1(\Omega_\gamma^+\cup\Omega_\gamma^-) $, and using the variable transformation in (\ref{variable2}), we can do the Taylor expansion $u$ and $v$ at point $\mathbf{p}$ to produce 
\begin{align}\label{defweakformtaylor}
            (\N_{\Omega}^- u,v)_{\Omega_{\gamma}^-} =&\int_{s=0}^{\delta}\int_{t=0}^{\delta-s}\int_{\eta=-\infty}^{+\infty} \int_{\theta=-\frac{\pi}{2}}^{\frac{\pi}{2}}\left((s+t) V_c(\theta)^T \cdot \nabla u(\mathbf{p}(\eta)) +(s^2[u(\theta,\mathbf{\xi})]_\mathcal{H}-t^2[u(\theta,\mathbf{\zeta})]_\mathcal{H}\right) \nonumber\\&\qquad
    \det(J)\widetilde{\gamma}_\delta(s+t)\left(v(\mathbf{p}(\eta)) - tV_c(\theta)^T\cdot \nabla v(\mathbf{\mu}
    \right) d\theta d\eta dtds\nonumber \\:= & E_N,
    \end{align}
where $\xi$, $\zeta$ and $\mu$ represent the intermediate points generated in the Lagrange remainder terms of the Taylor expansions of $u(x)$, $u(y)$, and $v(x)$ at point $\mathbf{p}$, respectively.
In the next part, we will show that:
\begin{equation}\label{enestresult}
    E_N=\langle\sigma\partial_\mathbf{n} u,v\rangle_{\Gamma}+\mathcal{O}(\delta).
\end{equation}
We can divide $E_N$ into four parts, i.e., 
$
   E_N=I_1+I_2-I_3-I_4. 
$
For $I_1$, it follows from (\ref{detJhalf}) and substituting $t+s$ with $r$ that 
\begin{align}\label{firstpartana}
I_1&=\int_{s=0}^{\delta}\int_{t=0}^{\delta-s}\int_{\eta=-\infty}^{\infty} \int_{\theta=-\frac{\pi}{2}}^{\frac{\pi}{2}}(s+t) V_c(\theta)^T \cdot \nabla u(\mathbf{p}(\eta)) v(\mathbf{p}(\eta))\widetilde{\gamma}_\delta(s+t)
\det(J)d\theta d\eta dtds\nonumber\\&=\int_{r=0}^{\delta}\int_{\eta=-\infty}^{\infty} \int_{\theta=-\frac{\pi}{2}}^{\frac{\pi}{2}}r^3\widetilde{\gamma}_\delta(r)cos(\theta)V_c(\theta)^T
 \cdot \nabla u(\mathbf{p}(\eta)) v(\mathbf{p}(\eta))
  d\theta d\eta dr\nonumber\\&
=
\int_{r=0}^{\delta}\frac{\pi}{2}r^3\widetilde{\gamma}_\delta(r)dr\int_{\eta=-\infty}^{\infty} v(\mathbf{p}(\eta)) (n_{1p},n_{2p})\cdot \nabla u(\mathbf{p}(\eta)) d\eta\nonumber\\&=\sigma\int_{p\in\Gamma}\frac{\partial u}{\partial \mathbf{n}}(p) v(p)dp=\langle\sigma\frac{\partial u}{\partial \mathbf{n}},v\rangle_{\Gamma}.
\end{align}
For $I_2$, it has:
\begin{align}
I_2&=\int_{s=0}^{\delta}\int_{t=0}^{\delta-s}\int_{\eta=-\infty}^{\infty} \int_{\theta=-\frac{\pi}{2}}^{\frac{\pi}{2}}(s^2[u(\theta,\mathbf{\xi})]_\mathcal{H}-t^2[u(\theta,\mathbf{\zeta})]_\mathcal{H}) v(\mathbf{p}(\eta))\widetilde{\gamma}_\delta(s+t)
\det(J) d\theta d\eta dtds\nonumber.
\end{align}
Because of that $v \in C^1(\Omega_\gamma^+\cup\Omega_\gamma^-) $, denoting that the bounded region $V_{sp2}=\{\mathbf{p}|\mathbf{p}\in\Gamma, v(\mathbf{p})\neq 0\}$ and the bounded region $U_{sp2}=\{\mathbf{x}|\mathbf{x}\in B_\delta(\mathbf{p})|\mathbf{p}\in V_{sp2}\}$, we can conclude that:
\begin{align}\label{secondpartana}
|I_2|&=\left|\int_{s=0}^{\delta}\int_{t=0}^{\delta-s}\int_{\mathbf{p}\in V_{sp2}} \int_{\theta=-\frac{\pi}{2}}^{\frac{\pi}{2}}(s^2[u(\theta,\mathbf{\xi})]_\mathcal{H}-t^2[u(\theta,\mathbf{\zeta})]_\mathcal{H}) v(\mathbf{p})\widetilde{\gamma}_\delta(s+t)
\det(J) d\theta d\mathbf{p}dtds\right|\nonumber\\&\leq \int_{s=0}^{\delta}\int_{t=0}^{\delta-s}\int_{\mathbf{p}\in V_{sp2}} \int_{\theta=-\frac{\pi}{2}}^{\frac{\pi}{2}}4(s+t)^3\cos(\theta) |v(\mathbf{p})|\widetilde{\gamma}_\delta(s+t)
\|\nabla^2 u \|_{L^\infty(U_{sp2})}d\theta d\mathbf{p}dtds\nonumber\\&\leq 8\|\nabla^2 u \|_{L^\infty(U_{sp2})}\|v\|_{L^\infty(V_{sp2})}|V_{sp2}|\int_{r=0}^{\delta}r^4\widetilde{\gamma}_\delta(r)dr\nonumber\\&\leq C\delta \|\nabla^2 u \|_{L^\infty(U_{sp2})}\|v\|_{L^\infty(V_{sp2})}|V_{sp2}|.
\end{align}
For $I_3$, denoting that $V_{sp3}=\{\mathbf{x}|\mathbf{x}\in\Omega_\gamma^+\cup\Omega_\gamma^-,~v(x)\neq 0\}$ and $U_{sp3}=\{\mathbf{p}|B_\delta(\mathbf{p})\cap V_{sp3}\neq\emptyset\}$, it follows from substituting $t+s$ with $r$ and it has that 
\begin{align}\label{thirdpartana}
|I_3|&=\left|\int_{s=0}^{\delta}\int_{t=0}^{\delta-s}\int_{\eta=-\infty}^{\infty} \int_{\theta=-\frac{\pi}{2}}^{\frac{\pi}{2}}t(s+t) V_c(\theta)^T \cdot \nabla u(\mathbf{p}(\eta))V_c(\theta)^T\cdot \nabla v(\mathbf{\mu})\widetilde{\gamma}_\delta(s+t)
\det(J)  d\theta d\eta dtds\right|\nonumber\\&\leq 4\int_{s=0}^{\delta}\int_{t=0}^{\delta-s}t(s+t)^2\widetilde{\gamma}_\delta(s+t)dtds\int_{\mathbf{p}\in U_{sp3}} \int_{\theta=-\frac{\pi}{2}}^{\frac{\pi}{2}}\cos(\theta)\|\nabla u\|_{L^\infty(U_{sp3})} \|\nabla v\|_{L^\infty(V_{sp3})}d\mathbf{p}d\theta\nonumber\\&\leq4|U_{sp3}|\|\nabla u\|_{L^\infty(U_{sp3})} \|\nabla v\|_{L^\infty(V_{sp3})}\int_{r=0}^{\delta}r^4\widetilde{\gamma}_\delta(r)dr\nonumber\\&\leq C
\delta \|\nabla u\|_{L^\infty(U_{sp3})} \|\nabla v\|_{L^\infty(V_{sp3})}.
\end{align}
For $I_4$, it can be estimated by 
\begin{align}\label{fourthpartana}
    |I_4|&=\left|\int_{s=0}^{\delta}\int_{t=0}^{\delta-s}\int_{\eta=-\infty}^{\infty} \int_{\theta=-\frac{\pi}{2}}^{\frac{\pi}{2}}t(s^2[u(\theta,\mathbf{\xi})]_\mathcal{H}-t^2[u(\theta,\mathbf{\zeta})]_\mathcal{H})V_c(\theta)^T\cdot \nabla v(\mathbf{\mu})\widetilde{\gamma}_\delta(s+t)
\det(J) d\theta d\eta dtds\right|\nonumber\\&\leq8\int_{s=0}^{\delta}\int_{t=0}^{\delta-s}t(s+t)^3\widetilde{\gamma}_\delta(s+t)dtds\int_{\mathbf{p}\in U_{sp3}} \int_{\theta=-\frac{\pi}{2}}^{\frac{\pi}{2}}\|\nabla^2 u \|_{L^\infty(U_{sp3})}\|\nabla v\|_{L^\infty(V_{sp3})}d\mathbf{p}d\theta\nonumber\\&\leq4|U_{sp3}|\|\nabla^2 u \|_{L^\infty(U_{sp3})}\|\nabla v\|_{L^\infty(V_{sp3})}\int_{r=0}^\delta r^5\widetilde{\gamma}_\delta(r)dr\nonumber\\&\leq C\delta^2\|\nabla^2 u \|_{L^\infty(U_{sp3})}\|\nabla v\|_{L^\infty(V_{sp3})}.
\end{align}
Combining the results (\ref{firstpartana}), (\ref{secondpartana}), (\ref{thirdpartana}) and (\ref{fourthpartana}),  the proof of (\ref{enestresult}) is completed.

\end{proof}
\YS{\begin{remark}
    In this theorem, because of that the half-plane is an unbounded area, to make sure the boundness of $u$ and $v$ in the integration, we take $v\in C_0^1(\Omega_\gamma^-\cup\Omega_\gamma^+)$. However, in a bounded domain, we can directly take $v\in C_0^1(\Omega_\gamma^-\cup\Omega_\gamma^+)$ as the regularity assumption. 
\end{remark}}
\subsection{\YS{Local limit in general bounded domains}}

In practical computations, domains are typically more complicated than the half-plane, lacking the simple structure that allows for straightforward analysis. This complexity requires a more delicate, localized treatment of different parts of the region. In this section, we extend the approach developed for the half-plane to more general two-dimensional bounded domains. Specifically, we provide proofs of the weak convergence result~\eqref{C3resultnew2} for two different classes of bounded convex regions. The detailed arguments of the theorem are deferred to Appendix~\ref{append3} and Appendix~\ref{append4}.

\begin{theorem}[region with $C^2$ boundary]\label{mainresult}
    Considering a bounded region $\Omega\subset\R^2$ which is convex and the boundary $\partial \Omega$ is a closed $C^2$ curve. $u\in C^2(\Omega_\gamma^+\cup\Omega_\gamma^-)\cap S_N(\Omega)$, $v\in C^1(\Omega_\gamma^+\cup\Omega_\gamma^-)$. Then, it has the following result:
    \begin{equation}\label{C3result}
        (\N_{\Omega}u,v)_{\Omega_{\gamma}^-}=\langle\sigma\partial_\mathbf{n} u,v\rangle_{\partial\Omega}+\mathcal{O}(\delta),
    \end{equation}
    where $\N_{\Omega}$ represents the three types of nonlocal Neumann operators mentioned in (\ref{innbc})--(\ref{outnbc}).
\end{theorem}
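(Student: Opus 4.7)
The idea is to reduce the statement for a general bounded convex domain with $C^2$ boundary to a perturbed version of the half-plane analysis in Theorem~\ref{oneonone}. First I would cover $\partial\Omega$ by finitely many open arcs on which $\partial\Omega$ can be written, after a rigid motion, as the graph of a $C^2$ function, and introduce a smooth partition of unity $\{\chi_k\}$ subordinate to the induced cover of $\Omega_\gamma^-\cup\Omega_\gamma^+$. Since the interaction range is $\delta$, for $\delta$ smaller than a geometric threshold determined by the minimum curvature radius of $\partial\Omega$ and the diameters of the arcs, only pairs $(\mathbf r_1,\mathbf r_2)$ lying in a common patch contribute, and the convexity of $\Omega$ guarantees that the segment $\overline{\mathbf r_1\mathbf r_2}$ meets $\partial\Omega$ at a unique boundary point $\mathbf p$.

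Within each patch I would replace the Cartesian substitution \eqref{variable2} by its curvilinear analogue in which $\eta$ is the arclength parameter of the arc, $\mathbf n(\mathbf p(\eta))$ and $\mathbf t(\mathbf p(\eta))$ are the outward unit normal and unit tangent, and $(\theta,s,t)$ play the same role as in the half-plane, so that
\begin{equation*}
\mathbf r_2 = \mathbf p(\eta) + s\bigl(\cos\theta\,\mathbf n(\eta)+\sin\theta\,\mathbf t(\eta)\bigr),\qquad \mathbf r_1 = \mathbf p(\eta) - t\bigl(\cos\theta\,\mathbf n(\eta)+\sin\theta\,\mathbf t(\eta)\bigr).
\end{equation*}
Using the Frenet relations $\mathbf p'(\eta)=\mathbf t(\eta)$, $\mathbf t'(\eta)=-\kappa(\eta)\mathbf n(\eta)$ and $\mathbf n'(\eta)=\kappa(\eta)\mathbf t(\eta)$, a direct computation yields a Jacobian of the form $\det(J)=(s+t)\cos\theta\bigl(1+\mathcal O(\delta)\bigr)$, where the $\mathcal O(\delta)$ factor is uniform in the patch and carries the full dependence on the curvature $\kappa$. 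Thus $\det(J)$ agrees with its half-plane analogue \eqref{detJhalf} up to a $1+\mathcal O(\delta)$ multiplicative perturbation.

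With this Jacobian in hand, the Taylor expansion of $u$ and $v$ about $\mathbf p(\eta)$ produces, in each patch, the same four pieces $I_1+I_2-I_3-I_4$ as in the proof of Theorem~\ref{oneonone}. The leading term $I_1$ again collapses, via the second-moment identity \eqref{second moment}, to $\langle\sigma\,\partial_{\mathbf n}u,\chi_k v\rangle_{\partial\Omega}$ against the true arclength measure on $\partial\Omega$, while the terms $I_2,I_3,I_4$ together with the contribution of the extra $\mathcal O(\delta)$ Jacobian factor in $I_1$ are all controlled by $C\delta\,\|u\|_{C^2}\|v\|_{C^1}$ exactly as in \eqref{secondpartana}--\eqref{fourthpartana}. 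Summing over the partition of unity and using compactness of $\partial\Omega$ yields \eqref{C3result} for $\mathcal N_\Omega^-$. For $\mathcal N_\Omega^+$ and $\mathcal N_\Omega^*$ I would combine the nonlocal Green identities \eqref{eqn1.3}--\eqref{eqn1.5}: the extra bilinear pieces $A_{\Omega_\gamma^-,\Omega_\gamma^+}$ and the additional $\Omega^+\times\Omega^+$ self-interaction are handled by the same patch-by-patch substitution, and by the symmetry of the kernel contribute only $\mathcal O(\delta)$. The main obstacle I anticipate is verifying that the curvilinear map $(\eta,\theta,s,t)\mapsto(\mathbf r_1,\mathbf r_2)$ is a diffeomorphism from an explicit parameter set onto the admissible interacting pairs inside each patch, uniformly for all sufficiently small $\delta$; this is precisely where the convexity of $\Omega$ and the $C^2$-regularity of $\partial\Omega$ are both essential, and it is the step where the genuinely new geometric input beyond Theorem~\ref{oneonone} is concentrated.
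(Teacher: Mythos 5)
Your plan correctly identifies the curvilinear substitution, the role of the Jacobian, and the partition-of-unity framing, which is a legitimate alternative to the paper's global arclength parametrization. However, there is a genuine gap in the central step. You assert that, after the curvilinear change of variables, ``the Taylor expansion of $u$ and $v$ about $\mathbf p(\eta)$ produces, in each patch, the same four pieces $I_1+I_2-I_3-I_4$'' and that ``$I_1$ again collapses, via the second-moment identity, to $\langle\sigma\,\partial_{\mathbf n}u,\chi_k v\rangle_{\partial\Omega}$.'' That collapse in Theorem~\ref{oneonone} depends on integrating $\theta$ over the \emph{full} range $(-\pi/2,\pi/2)$ with $s\in(0,\delta)$, $t\in(0,\delta-s)$, but for a curved boundary this box is \emph{not} the preimage, under the curvilinear map, of the admissible set $\{\mathbf r_1\in\Omega_\gamma^-,\ \mathbf r_2\in\Omega_\gamma^+,\ |\mathbf r_1-\mathbf r_2|<\delta\}$. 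For $|\theta|$ near $\pi/2$ the ray through $\mathbf p(\eta)$ re-crosses the curved $\partial\Omega$ before distance $t$, so part of the box lands on the wrong side of $\partial\Omega$, and conversely some admissible pairs are not reached within the box. You flag this as ``verifying that the curvilinear map is a diffeomorphism onto the admissible pairs,'' but that is not where the difficulty lies: injectivity is already settled by convexity (Lemma~\ref{convex}) and by the $C^2$ intersection lemma (Lemma~\ref{intersection}). What is missing is a quantitative bound on the \emph{discrepancy} between the box and the true parameter set, without which the second-moment cancellation does not yield $\sigma$.

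The paper closes exactly this gap with the decomposition $A_1-A_2+A_3$ in \eqref{divide3part328}: $A_1$ integrates over the tangent-line sector $\overset{\frown}{\mathbf t_1\mathbf p\mathbf t_2}$, where the parameter range really is the half-plane box, so the half-plane computation applies verbatim and gives $\langle\sigma\partial_{\mathbf n}u,v\rangle_{\partial\Omega}+\mathcal O(\delta)$; the correction terms $A_2$ and $A_3$ account for the mismatch between the tangent sector, the boundary sector $\overset{\frown}{\mathbf s_1\mathbf p\mathbf s_2}$, and the residual region $R_i(\mathbf p)$, and are controlled by the curvature-based angle estimates $\angle\mathbf t_1\mathbf p\mathbf s_1,\ \angle\mathbf t_2\mathbf p\mathbf s_2\le\tfrac{K}{2}\delta+\mathcal O(\delta^2)$. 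Without an estimate of this kind your sketch has no way to see that the curvature contributes only $\mathcal O(\delta)$ to $I_1$. As a minor side remark, your claim that the curvilinear Jacobian equals $(s+t)\cos\theta\,\bigl(1+\mathcal O(\delta)\bigr)$ with the $\mathcal O(\delta)$ carrying the curvature is technically true but not sharp: with $\eta$ the arclength parameter the Frenet terms in the $\eta$-column are proportional to the $\theta$-column and cancel under a column operation, giving $\det J=(s+t)\cos\theta$ exactly as in \eqref{detJ}, with no curvature factor at all. The curvature-induced $\delta$-dependence lives entirely in the shape of the parameter domain, which is precisely the ingredient your sketch omits.
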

     A detailed proof of this theorem is given in Appendix~\ref{append3}, where we focus on domains with smooth boundaries. In practical applications, however, boundaries with corners or singular points are frequently encountered. To address such situations, we establish a complementary theorem for convex polygonal domains.
\begin{theorem}[polygon] \label{rectangleright}
    Consider a convex polygon $\Omega\subset\R^2$  whose lowest vertex is the origin and one of its edges is the x-axis. Assuming that its boundary is $\Gamma_r$, and $u\in C^2(\Omega_\gamma^+\cup \Omega_\gamma^+)\cap S_N(\Omega),~v\in C^1(\Omega_\gamma^+\cup \Omega_\gamma^+)$, then it has the following result:
    \begin{equation}
       (\N_{\Omega} u,v)_{\Omega_\gamma^+}=\langle\sigma\partial_\mathbf{n} u,v\rangle_{\Gamma_r}+\mathcal{O}(\delta),
    \end{equation}
    where $\N_{\Omega}$ represents the three types of nonlocal Neumann operators mentioned in (\ref{innbc})--(\ref{outnbc}).
\end{theorem}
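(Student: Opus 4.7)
The plan is to reduce the polygonal setting to the half-plane result of Theorem \ref{oneonone} by a local flattening argument along each edge, together with a crude direct estimate near every vertex. Label the edges of $\Omega$ as $\Gamma_r = E_1\cup\cdots\cup E_K$ with vertices $P_1,\ldots,P_K$, and split each edge as $E_i = \tilde E_i\cup C_i$, where $\tilde E_i$ is the ``core'' consisting of points at distance $\ge 2\delta$ from the two adjacent vertices and $C_i$ is the pair of ``corner stubs'' of total length $\mathcal{O}(\delta)$. This induces a decomposition
\[
\Omega_\gamma^\pm = \bigsqcup_{i=1}^K S_i^\pm \;\sqcup\; \bigsqcup_{j=1}^K W_j^\pm,
\]
where $S_i^\pm$ is the collar strip whose nearest boundary point lies in $\tilde E_i$ and $W_j^\pm$ is the wedge-shaped remainder around vertex $P_j$. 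Convexity of $\Omega$ forces $|W_j^\pm| = \mathcal{O}(\delta^2)$, while each $S_i^\pm$ is essentially rectangular with one dimension $\delta$.

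First I would dispose of the corner contributions. The assumption $u\in S_N(\Omega)$ gives $\|\mathcal{N}_\Omega u\|_{L^\infty} \le C$ uniformly in $\delta$, so
\[
\bigl|(\mathcal{N}_\Omega u, v)_{W_j^\pm}\bigr| \le C\|v\|_{L^\infty}|W_j^\pm| = \mathcal{O}(\delta^2),
\]
and summing over the $K$ vertices preserves the $\mathcal{O}(\delta^2)$ bound. Next, for each core strip $S_i^\pm$ I would rotate and translate coordinates so that the straight line carrying $E_i$ becomes the $y$-axis, with outward normal $(1,0)^T$. Convexity of $\Omega$, together with the $2\delta$-separation from the adjacent vertices, guarantees that for every $x\in S_i^\pm$ the interaction set $B_\delta(x)$ meets $\partial\Omega$ only inside this supporting line, so the integrand of $(\mathcal{N}_\Omega u, v)_{S_i^\pm}$ coincides exactly with the integrand in the half-plane model. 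Applying the change of variables \eqref{variable2} with $\eta$ now restricted to $\tilde E_i$, and repeating the Taylor expansion at $\mathbf{p}\in\tilde E_i$ with the Jacobian \eqref{detJhalf} as in the proof of Theorem \ref{oneonone}, delivers
\[
(\mathcal{N}_\Omega u, v)_{S_i^\pm} = \langle \sigma\partial_\mathbf{n} u, v\rangle_{\tilde E_i} + \mathcal{O}(\delta).
\]

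Summing over edges and incorporating the corner estimate yields $(\mathcal{N}_\Omega u, v)_{\Omega_\gamma^\pm} = \sum_i \langle \sigma\partial_\mathbf{n} u, v\rangle_{\tilde E_i} + \mathcal{O}(\delta)$. Since $|C_i| = \mathcal{O}(\delta)$ and $\partial_\mathbf{n} u, v$ are uniformly bounded on $\partial\Omega$, each $|\langle\sigma\partial_\mathbf{n} u, v\rangle_{C_i}|$ is $\mathcal{O}(\delta)$, so patching gives $\sum_i \langle\sigma\partial_\mathbf{n} u, v\rangle_{\tilde E_i} = \langle\sigma\partial_\mathbf{n} u, v\rangle_{\Gamma_r} + \mathcal{O}(\delta)$, completing the argument. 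The main technical obstacle will be the bookkeeping in the edge step: verifying that for each of the three Neumann operators, and in particular for $\mathcal{N}_\Omega^*$ whose $y$-integral ranges over all of $\Omega^+$ rather than merely $\Omega_\gamma^-$, the interaction ball seen from a core point $x\in S_i^\pm$ cannot wrap around a neighbouring vertex, so that the half-plane Jacobian and Taylor expansion transfer verbatim without extra boundary corrections.
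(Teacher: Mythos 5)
There is a genuine gap in your treatment of the corner contributions. You assert that $u\in S_N(\Omega)$ gives $\|\mathcal{N}_\Omega u\|_{L^\infty}\le C$ \emph{uniformly in $\delta$}, and use this to bound $(\mathcal{N}_\Omega u,v)_{W_j^\pm}$ by $C\|v\|_{L^\infty}|W_j^\pm|=\mathcal{O}(\delta^2)$. This cannot be right: a $\delta$-uniform $L^\infty$ bound on $\mathcal{N}_\Omega u$ over $\Omega_\gamma^-$, combined with $|\Omega_\gamma^-|=\mathcal{O}(\delta)$, would force the \emph{entire} left-hand side $(\mathcal{N}_\Omega u,v)_{\Omega_\gamma^-}$ to be $\mathcal{O}(\delta)$, and then the theorem would collapse to the vacuous assertion $\langle\sigma\partial_\mathbf{n}u,v\rangle_{\Gamma_r}=\mathcal{O}(\delta)$ for every $u,v$, which is false whenever $\partial_\mathbf{n}u\neq 0$. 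In fact, for $u\in C^2$ near the boundary the dominant term $\nabla u(\mathbf{x})\cdot\int_{\Omega_\gamma^+\cap B_\delta(\mathbf{x})}(\mathbf{x}-\mathbf{y})\gamma_\delta(\mathbf{x},\mathbf{y})\,d\mathbf{y}$ scales like $\delta^{-1}$ (a first moment of $\gamma_\delta$ over a half-ball), so the $S_N$ condition can only be a per-$\delta$ finiteness statement, not a uniform bound. The paper never invokes $S_N$ quantitatively; the corner estimate there goes through the $C^2$ bounds $C_u,C_v$ and the fact that the arc length of the corner stubs is $\mathcal{O}(\delta)$. Your argument can be repaired the same way: with $\|\mathcal{N}_\Omega u\|_{L^\infty(W_j^\pm)}=\mathcal{O}(\delta^{-1})$ from $C^2$ regularity and $|W_j^\pm|=\mathcal{O}(\delta^2)$, you still get $\mathcal{O}(\delta)$ per vertex, which is all that is needed; but as written the $\mathcal{O}(\delta^2)$ claim and its justification are wrong.

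Two further points. First, the $2\delta$ margin you use to define $\tilde E_i$ does not quite guarantee that $B_\delta(\mathbf{x})$ avoids the adjacent edge: with $|{\bf x}-{\bf p}|\le\delta$ and $\mathrm{dist}({\bf p},P_j)\ge 2\delta$ you only get $|{\bf x}-P_j|\ge\delta$, so the ball may touch the vertex and, for sufficiently acute interior angles, cross the next edge; the paper's cutoff $\delta/\sin\theta_P$ for sharp vertices is exactly designed to cover this. Second, your reduction to Theorem \ref{oneonone} only handles $\mathcal{N}_\Omega^-$, since that theorem's setup is the pairing of $\Omega_\gamma^-$ against $\Omega_\gamma^+$; for $\mathcal{N}_\Omega^*$ the $\mathbf{y}$-integral runs over all of $\Omega^+$ and a separate splitting (as in the paper's Theorem \ref{locallimitnew}, which isolates the $\Omega_\gamma^+$-against-$\Omega_\gamma^+$ piece and bounds it by $\mathcal{O}(\delta)$ via a symmetrized Taylor expansion) is required. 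You flag this as an obstacle but do not resolve it, so the proof as proposed is incomplete for two of the three operators covered by the statement.
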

    The detailed proof is given in Appendix~\ref{append4}. \YS{The results in Theorem \ref{mainresult} and Theorem \ref{rectangleright} demonstrate the consistency of the three types of nonlocal Neumann operators and the local Neumann operators: for $v \in C^1(\Omega_\gamma^-\cup\Omega_\gamma^+)$, the inner Neumann operator converges weakly to the local one at rate $\mathcal{O}(\delta)$. This conclusion directly demonstrates the AC property of the given nonlocal Neumann operators and provides support for the subsequent construction of nonlocal Neumann problems.}
\begin{remark}
    Building on \cite{DuTianZhou2023}, where a similar weak convergence result was derived from the variational formulation \eqref{eqn1.3} under the condition that $u\in C^3(\Omega)$ and $v\in C^2(\Omega)$, we re-establish the result by analyzing the Neumann operator directly. Our approach yields the same convergence conclusion under weaker regularity assumptions on $u$ and $v$.
\end{remark}

\section{Second order Neumann boundary conditions  in $L^2$ norm}

We now consider the local limit of the nonlocal Poisson problem with Neumann boundary conditions. Without loss of generality, we assume $\sigma=1$ in (\ref{second moment}). The corresponding  local problem is given as \begin{equation}\label{local}
    \left\{
    \begin{aligned}
    &-\Delta u_0= f, \quad in~ \Omega,\\
    &\partial_\mathbf{n} u_0=g, \quad on ~ \partial \Omega,
\end{aligned}
    \right.
\end{equation}
 with $\int_{\mathbf{x}\in\Omega} u_0 d\mathbf{x}=0$. Here we assume the given functions $f,g$ smoothy enough such that $u_0\in C^4(\Omega).$


We also denote by $\widetilde{u_0}$ a $C^4$ extension of $u_0$ from $\Omega$ to $\Omega^+$, such that $\widetilde{u_0}|_\Omega=u_0$ and $\widetilde{u_0}\in C^4(\Omega^+)$. The corresponding nonlocal Neumann problems are defined in subsection \ref{sec21}, where we introduce the variational problem and establish the appropriate energy space for the analysis. After that, we select appropriate Neumann boundary data and analyze the compatibility conditions of the pure Neumann problem, which allows us to construct two types of nonlocal Neumann boundary problems corresponding to different boundary-definition regions. Finally, we establish the second-order asymptotic compatibility of both formulations with respect to the classical local Neumann Poisson problems.

\subsection{Neumann variational problem}\label{sec21}
We firstly consider the following nonlocal Poisson problem with Neumann operator $N^+_\Omega$ in (\ref{outnbcfw}): 
\begin{equation}
\begin{split}
&\L_\delta u(\mathbf{x})=f(\mathbf{x}),\quad \mathbf{x}\in\Omega,\\
&\N_\Omega^+u(\mathbf{x})=g(\mathbf{x}),\quad \mathbf{x}\in\Omega_\gamma^+.
\end{split}
\end{equation}
The weak form is to determine $u$, such that
\begin{eqnarray}
\label{t2}
(f,v)_{\Omega}+(g,v)_{\Omega_\gamma^+}
=A_{\Omega,\Omega}(u,v)+2A_{\Omega_{\gamma}^-,\Omega_{\gamma}^+}(u,v).
\end{eqnarray}
Let us consider the variational problem
$$
J_{\Omega^+}(u)=\frac{1}{2}\big[A_{\Omega^+,\Omega^+}(u,u)-A_{\Omega_\gamma^+,\Omega_\gamma^+}(u,u)\big]-(f,u)_\Omega-(g,u)_{\Omega_\gamma^+}\longrightarrow\inf J_{\Omega^+}(u).
$$
The critical point satisfies
$$
A_{\Omega,\Omega}(u,v)+2A_{\Omega_{\gamma}^-,\Omega_{\gamma}^+}(u,v)-(f,v)_\Omega-(g,v)_{\Omega_\gamma^+}=0,
$$
which is exactly (\ref{t2}).  Similarly, we can derive the corresponding inner and outer nonlocal Poisson problems  associated with other two Neumann operators in (\ref{innbc}) and (\ref{outnbc}) by 
\begin{equation}\label{nonlocalcpinner}
  \textit{(inner~Neumann~problem)} \quad \left\{
    \begin{aligned}
    &\L_\delta u= f_{in}, \quad in~\Omega,\\
    &\N^-_\Omega u=g_{in}, \quad in ~\Omega_\gamma^-,
\end{aligned}
    \right.
\end{equation}
and
\begin{equation}\label{nonlocalcp}
  \textit{(outer~Neumann~problem)}  \quad \left\{
    \begin{aligned}
    &\L_\delta u= f_{out}, \quad in~ \Omega,\\
    &\N^{*}_\Omega u=g_{out}, \quad in~ \Omega_\gamma^+,
\end{aligned}
    \right.
\end{equation}
with a constrained condition $\int_{\mathbf{x}\in\Omega} u d\mathbf{x}=0$ to ensure a unique solution. The energy seminorm associated with problem (\ref{nonlocalcpinner}) is as follows:
\begin{equation}
    |u|^2_{\delta,\Omega}=\int_{\Omega}\int_{\Omega}[u(\mathbf{x})-u(\mathbf{y})]^2\gamma_\delta(\mathbf{x},\mathbf{y})d\mathbf{y}d\mathbf{x},
\end{equation}
with the corresponding constrained energy space given by:
\begin{equation}
    S_{\delta}(\Omega)=\left\{u\in L^2(\Omega): |u|_{\delta,\Omega}<\infty, \int_\Omega ud\mathbf{x}=0\right\}.
\end{equation}
Correspondingly, the energy seminorm associated with problem (\ref{nonlocalcp}) is $|u|_{\delta,\Omega^+}$. The constrained energy space is that:
\begin{equation}
    S_{\delta}^*(\Omega^+)=\left\{u\in L^2(\Omega^+): |u|_{\delta,\Omega^+}<\infty, \int_{\Omega} ud\mathbf{x}=0\right\}.
\end{equation}
\begin{remark}
    It should be mentioned here that although we give out three kinds of Neumann boundary conditions, we will only analyze the inner Neumann problem (\ref{nonlocalcpinner}) with the operator $\N^-_\Omega$, and the outer Neumann problem (\ref{nonlocalcp}) with the operator $\N^*_\Omega$. It is mainly because that the operator $\N^+_\Omega$ shares similar properties with $\N^*_\Omega$ and is less representative in the existing works. 
\end{remark}


To proceed with the analysis, we next recall a nonlocal Poincar\'e inequality, which will play a crucial role in the derivation of the subsequent estimates.
\begin{lemma}[Nonlocal Poincar\'e inequality \cite{Tadele2010,ponce2004}]\label{ballpoincare}
    Assume that $\gamma$ satisfies Definition \ref{kerneldef}, then, there exists $\delta_0>0$, $C_1=C_1(\Omega)>0$, such that for all $ 0<\delta<\delta_0$,$\forall u\in S_\delta(\Omega)$, it holds that:
    \begin{equation}
        C_1\|u\|^2_{L^2(\Omega)}\leq A_{\Omega,\Omega}(u,u)=|u|_{\delta,\Omega}^2.
    \end{equation}
    Besides, if $u\in L^2(\Omega)~and~|u|_{\delta,\Omega}<\infty $, there exists  $C_2>0$ independent with $\delta$, such that
    \begin{equation}
       |u|_{\delta,\Omega}^2=A_{\Omega,\Omega}(u,u)\leq C_2\delta^{-2}\|u\|^2_{L^2(\Omega)}.
    \end{equation} 
    
\end{lemma}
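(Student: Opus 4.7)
The plan is to establish the two bounds separately: the upper estimate $|u|_{\delta,\Omega}^2\le C_2\delta^{-2}\|u\|_{L^2(\Omega)}^2$ by a direct computation, and the lower estimate $C_1\|u\|_{L^2(\Omega)}^2\le |u|_{\delta,\Omega}^2$ by a uniform-in-$\delta$ contradiction/compactness argument of Bourgain--Brezis--Mironescu / Ponce type, in the spirit of \cite{Tadele2010,ponce2004}.

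For the upper bound I would apply $(u(x)-u(y))^2\le 2u(x)^2+2u(y)^2$, Fubini, and the symmetry of the kernel to reduce the double integral to
\[
|u|_{\delta,\Omega}^2\le 4\|u\|_{L^2(\Omega)}^2\,\sup_{x\in\Omega}\int_{B_\delta(x)}\gamma_\delta(x,y)\,dy.
\]
Passing to polar coordinates, the remaining integral is $2\pi\int_0^\delta r\,\widetilde\gamma_\delta(r)\,dr$. Using the scaling implicit in Definition~\ref{kerneldef} (the second-moment identity \eqref{second moment} being $\delta$-independent while the support shrinks like $\delta$, which is consistent with the usual normalization $\widetilde\gamma_\delta(r)=\delta^{-4}\widetilde\gamma(r/\delta)$ standard in this literature), one obtains $\int_0^\delta r\,\widetilde\gamma_\delta(r)\,dr\le C\delta^{-2}$, which closes the estimate.

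For the lower bound, I would argue by contradiction. If no such uniform $C_1$ exists, then there are sequences $\delta_n\in(0,\delta_0)$ and $u_n\in S_{\delta_n}(\Omega)$ with $\|u_n\|_{L^2(\Omega)}=1$, $\int_\Omega u_n\,dx=0$, and $|u_n|_{\delta_n,\Omega}\to 0$. After passing to a subsequence with $\delta_n\to\delta_\ast\in[0,\delta_0]$, Ponce's compactness theorem \cite{ponce2004} yields a further subsequence converging strongly in $L^2(\Omega)$ to a limit $u_\ast$ satisfying $\|u_\ast\|_{L^2(\Omega)}=1$ and $\int_\Omega u_\ast\,dx=0$. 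Lower semicontinuity of the nonlocal energy then forces the limit to have vanishing energy: in the regime $\delta_\ast=0$, the BBM-type identification places $u_\ast\in H^1(\Omega)$ with $\nabla u_\ast\equiv 0$; in the regime $\delta_\ast>0$, the identity $A_{\Omega,\Omega}(u_\ast,u_\ast)=0$ forces $u_\ast(x)=u_\ast(y)$ for almost every pair with $|x-y|<\delta_\ast$, and the connectedness of $\Omega$ makes $u_\ast$ constant. The zero-mean constraint then gives $u_\ast\equiv 0$, contradicting $\|u_\ast\|_{L^2(\Omega)}=1$.

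The principal obstacle is the branch $\delta_\ast=0$: one must verify that the family $\{u_n\}$, whose nonlocal seminorms are measured against different kernels $\gamma_{\delta_n}$ with shrinking horizons, satisfies the hypotheses of Ponce's compactness theorem. This reduces to checking that the rescaled radial profiles form an admissible mollifier family (nonnegativity, vanishing support, and a uniform lower bound on the normalized second moment), all of which follow from Definition~\ref{kerneldef}. Once this is in hand, the zero-mean constraint built into $S_\delta(\Omega)$ eliminates the kernel of the limit Dirichlet form, and the uniform Poincar\'e constant $C_1$ is secured.
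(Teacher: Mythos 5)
The paper offers no proof of Lemma~\ref{ballpoincare}: it is cited directly from \cite{Tadele2010,ponce2004} and the text moves straight on, so there is no authorial argument to compare against. Your two-step strategy --- an elementary Cauchy--Schwarz/Fubini estimate for the $\delta^{-2}$ upper bound and a BBM/Ponce compactness-by-contradiction argument for the uniform lower bound --- is precisely the route taken in those references, and the lower-bound half (including your correct observation that the zero-mean constraint kills the constant mode, and that the normalized profile $\rho_\delta(z)=|z|^2\gamma_\delta(z)/(2\sigma)$ supplies the admissible mollifier family) is sound in outline.

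There is, however, a genuine gap you have only half-acknowledged: the estimate $\int_0^\delta r\,\widetilde\gamma_\delta(r)\,dr\le C\delta^{-2}$ does not follow from Definition~\ref{kerneldef}. The second-moment condition controls $\int_0^\delta r^3\widetilde\gamma_\delta(r)\,dr$, and since $r^{-2}\ge\delta^{-2}$ on $(0,\delta)$ this only yields the \emph{reverse} inequality $\int_0^\delta r\widetilde\gamma_\delta\,dr\ge\delta^{-2}\int_0^\delta r^3\widetilde\gamma_\delta\,dr$. A concrete violator: $\widetilde\gamma_\delta=M_\delta\,\chi_{(0,\delta^2)}$ with $M_\delta=4\sigma\delta^{-8}$ satisfies every clause of Definition~\ref{kerneldef} (it is in $L^1(0,\delta)$, nonnegative, supported in $(0,\delta)$, and has second moment $\sigma$), yet $\int_0^\delta r\widetilde\gamma_\delta\,dr=2\sigma\delta^{-4}\gg\delta^{-2}$. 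You invoke the rescaling $\widetilde\gamma_\delta(r)=\delta^{-4}\widetilde\gamma(r/\delta)$ as ``implicit,'' but it is a separate structural hypothesis --- equivalent to the neighborhood-mass bound~(\ref{supbound}), which the paper only imposes later in Section~4. The lemma as stated is not provable from the stated hypotheses; once (\ref{supbound}) (or radial nonincreasingness, or the explicit rescaling) is added, your upper estimate closes, and the same additional structure is what licenses the uniform-in-$\delta$ applicability of Ponce's compactness --- in particular, in the $\delta_\ast>0$ branch a pointwise kernel lower bound of the type~(\ref{lowbound}) is what makes the lower-semicontinuity step and the ``constant on $\delta_\ast$-balls implies globally constant'' conclusion robust across the whole family $\{\gamma_{\delta_n}\}$.
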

\subsection{\YS{Compatible condition for the pure Neumann boundary problems}}
The asymptotic compatibility between nonlocal and local problems is an important criterion for measuring the rationality of a defined nonlocal problem. Taking the inner Neumann problem~\eqref{nonlocalcpinner} as an example, for the local problem~\eqref{local}, the Neumann boundary value $g$ is defined on the one-dimensional manifold $\partial\Omega$, 
while for the corresponding nonlocal problem, the boundary data are prescribed on the two-dimensional interaction region $\Omega_\gamma^-$. Hence, a direct analogy is not appropriate. 

From the weak convergence results established in Section~2, we know that the nonlocal Neumann operator 
$N^-_\Omega \widetilde{u_0}$ converges to the classical outer normal derivative value $g$ with first-order accuracy in the weak sense. This naturally suggests that the nonlocal boundary condition could be defined as $g_{in}=N^-_\Omega \widetilde{u_0} $. However, since we are dealing with a pure Neumann problem, one first needs to ensure the following compatible condition:
\begin{equation}
    \int_{\Omega} f_{in}d\mathbf{x}+\int_{\Omega_\gamma^-} g_{in}d\mathbf{x}=0,
\end{equation}
which indicates that the right-hand side $f_{in}$ must be modified by adding a supplementary term $df_{in}$ so as to satisfy the compatibility condition. Assume that $df_{in}$ is a constant irrelated to point $\mathbf{x}$, from the compatible condition, it has that:
\begin{equation}\label{innerconddef}
     f_{in}= f +\frac{1}{S(\Omega)} \Big(\int_{\partial\Omega} gd\mathbf{x}-\int_{\Omega_\gamma^-} N_{\Omega}^- \widetilde{u_0}d\mathbf{x}\Big):=f+df_{in}\quad in~ \Omega,\quad g_{in}=N^{-}_\Omega \widetilde{u_0} \quad in ~ \Omega_\gamma^-.
\end{equation}
Totally the same, as for problem (\ref{nonlocalcp}), we can choose the right-hand side term and Neumann boundary condition as follows:
\begin{equation}\label{outerconddef}
    \begin{aligned}
    &f_{out}= f+ \frac{1}{S(\Omega)} \Big(\int_{\partial\Omega} g d\mathbf{x}- \int_{\Omega_\gamma^+} \N_{\Omega}^{*} \widetilde{u_0} d\mathbf{x}\Big):=f+df_{out}\quad in~ \Omega,\quad g_{out}=N^{*}_\Omega \widetilde{u_0} \quad in ~ \Omega_\gamma^+.
\end{aligned}
\end{equation}
As for the estimations of the supplementary term $df_{in}$, from the definition of nonlocal operator, it has that:
    \begin{equation}
        (\L_\delta \widetilde{u_0},1)_{\Omega}=A_{\Omega,\Omega}(u_0,1)-(\N^-_\Omega\widetilde{u_0},1)_{\Omega_\gamma^-}= -(\N^-_\Omega\widetilde{u_0},1)_{\Omega_\gamma^-}.
    \end{equation}
    From Taylor expansion, when $u\in C^4(\Omega^+)$, it's easy to get the following classical results:
    \begin{equation}\label{classicalC^4result}
        \L_\delta \widetilde{u_0}(\mathbf{x})+ \Delta u_0(\mathbf{x})=\mathcal{O}(\delta^2), \quad \forall \mathbf{x}\in\Omega.
    \end{equation}
    Then, we have:
    \begin{equation}\label{df-result}
        df_{in}=\frac{1}{S(\Omega)}\left((g,1)_{\partial\Omega}-(\N^-_\Omega\widetilde{u_0},1)_{\Omega_\gamma^-}\right)=\frac{1}{S(\Omega)}(\L_\delta \widetilde{u_0}+ \Delta u_0,1)_{\Omega}=\mathcal{O}(\delta^2).
    \end{equation}
    Similarly, we can get that:
    \begin{equation}\label{df*result}
        df_{out}=\frac{1}{S(\Omega)}\left((g,1)_{\partial\Omega}-(\N^*_\Omega\widetilde{u_0},1)_{\Omega_\gamma^+}\right)=\mathcal{O}(\delta^2).
    \end{equation}
    \begin{remark}
        The regularity assumption ($u_0\in C^4(\Omega)$) in  estimation (\ref{df-result}) and (\ref{df*result}) is not a sharp assumption. Like the proof in Section 3, we can get that $df_{in/out}=\mathcal{O}(\delta^2)$ when $u\in C^3(\Omega)$. 
    \end{remark}
In the subsequent parts of this paper, when discussing the inner Neumann problem (\ref{nonlocalcpinner}) and outer Neumann problem (\ref{nonlocalcp}), we will consistently adopt the boundary conditions defined in (\ref{innerconddef}) and (\ref{outerconddef}). Our analysis will then focus on the asymptotic compatibility between the two nonlocal Neumann problems and the corresponding local problem (\ref{local}).

\subsection{ \YS{$L^2$-norm asymptotic convergence result for the inner and outer Neumann problems}}
    Assuming that $u_\delta^-$ is the solution of (\ref{nonlocalcpinner}), from (\ref{local}) and (\ref{nonlocalcpinner}), it has that:
    \begin{align}
        (\L_\delta u_\delta^-,v)_\Omega=(-\Delta u_0,v)_\Omega- \frac{1}{S(\Omega)}\left(\int_{\partial\Omega} gd\mathbf{x}- \int_{\Omega_\gamma^-} \N_{\Omega}^{-} \widetilde{u_0}d\mathbf{x},v\right)_\Omega.
    \end{align}
    By rearranging the above equation, we obtain
    \begin{equation}
        (\L_\delta (u_\delta^--\widetilde{u_0}),v)_\Omega+(\mathcal{O}(\delta^2),v)_\Omega= \frac{1}{S(\Omega)}\left( \int_{\Omega_\gamma^-} \N_{\Omega}^{-} \widetilde{u_0}d\mathbf{x}-\int_{\partial\Omega} gd\mathbf{x},v\right)_\Omega.
    \end{equation}
    Splitting the first term, from the Neumann boundary condition in problem (\ref{nonlocalcpinner}), we have:
    \begin{align}
        (\L_\delta (u_\delta^--\widetilde{u_0}),v)_\Omega&=\int_{\Omega}\int_{\Omega}[(u_\delta^--\widetilde{u_0})(\mathbf{x})-(u_\delta^--\widetilde{u_0})(\mathbf{y})]\gamma_\delta(\mathbf{x},\mathbf{y})v(\mathbf{x})d\mathbf{y}d\mathbf{x}-(\N^-_\Omega u_\delta-\N^-_\Omega \widetilde{u_0},v)_{\Omega_\gamma^-}\nonumber\\&=A_{\Omega,\Omega}(u_\delta^--u_0,v)=
        \left(\mathcal{O}(\delta^2)+\frac{ 1}{S(\Omega)}(\int_{\Omega_\gamma^-} \N_{\Omega}^{-} \widetilde{u_0}d\mathbf{x}-\int_{\partial\Omega} gd\mathbf{x}),v\right)_\Omega.
    \end{align}
    
    Taking $v=u_\delta-u_0$, from the Poincar\'e inequality (\ref{ballpoincare}) and the estimation of $df_{in}$ (\ref{df-result}), it can be obtained from the above equation that:
    \begin{align}\label{modelresultn-}
        \|u_\delta^--u_0\|^2_{L^2(\Omega)}&\leq C A_{\Omega,\Omega}(u_\delta^--\widetilde{u_0},u_\delta^--\widetilde{u_0})\nonumber\\&=C\left(\mathcal{O}(\delta^2)+\frac{1} {S(\Omega)}\left(\int_{\Omega_\gamma^-} \N_{\Omega}^{-} \widetilde{u_0}d\mathbf{x}-\int_{\partial\Omega} gd\mathbf{x}\right),u_\delta-u_0\right)_\Omega\nonumber\\&\leq C\delta^2\|u_\delta^--u_0\|_{L^2(\Omega)},
    \end{align}
    which means
    \begin{equation}
        \|u_\delta^--u_0\|_{L^2(\Omega)}\leq C\delta^2.
    \end{equation}
    It can be generalized as the following theorem:
    \begin{theorem}\label{thm41}
     Assuming that $u_0\in C^4(\Omega)$, the local problem (\ref{local}) is second-order asymptotically compatible with the nonlocal problem (\ref{nonlocalcpinner}), in other words
    \begin{equation}
        \|u_0-u_\delta^-\|_{L^2(\Omega)}=\mathcal{O}(\delta^2),
    \end{equation}
    where $u_\delta^-$ is the solution of (\ref{nonlocalcpinner}).
\end{theorem}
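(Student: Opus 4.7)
The plan is to formalize the consistency-plus-stability argument already implicit in the display (\ref{modelresultn-}). Introduce the error $e_\delta := u_\delta^- - \widetilde{u_0}$; on $\Omega$ this coincides with $u_\delta^- - u_0$, and since both $u_\delta^-$ and $u_0$ satisfy the zero-mean constraint $\int_\Omega \cdot \, d\mathbf{x} = 0$, the restriction $e_\delta|_\Omega$ belongs to the admissible space $S_\delta(\Omega)$ for the nonlocal Poincar\'e inequality of Lemma \ref{ballpoincare}.

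First I would put both problems in a common weak form on $\Omega$ via Green's identity (\ref{eqn1.3}): applied to $u_\delta^-$ this gives $A_{\Omega,\Omega}(u_\delta^-, v) = (f_{in}, v)_\Omega + (g_{in}, v)_{\Omega_\gamma^-}$, and applied to the extension $\widetilde{u_0}$ it gives $A_{\Omega,\Omega}(\widetilde{u_0}, v) = (\L_\delta \widetilde{u_0}, v)_\Omega + (\N_\Omega^- \widetilde{u_0}, v)_{\Omega_\gamma^-}$. Subtracting and exploiting the defining choice $g_{in} = \N_\Omega^- \widetilde{u_0}$ from (\ref{innerconddef}) so that the two boundary integrals over $\Omega_\gamma^-$ cancel exactly, I arrive at
\begin{equation*}
A_{\Omega,\Omega}(e_\delta, v) = \bigl(f_{in} - \L_\delta \widetilde{u_0},\, v\bigr)_\Omega.
\end{equation*}
The right-hand integrand is then controlled pointwise by $C\delta^2$: writing $f_{in} = f + df_{in} = -\Delta u_0 + df_{in}$ yields the decomposition $f_{in} - \L_\delta \widetilde{u_0} = -(\L_\delta \widetilde{u_0} + \Delta u_0) + df_{in}$, where the first piece is $\mathcal{O}(\delta^2)$ uniformly on $\Omega$ by the Taylor consistency (\ref{classicalC^4result}) under $u_0\in C^4$, and $df_{in} = \mathcal{O}(\delta^2)$ by (\ref{df-result}).

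The final step is to test with $v = e_\delta|_\Omega$, apply Cauchy--Schwarz to the right and the Poincar\'e inequality of Lemma \ref{ballpoincare} to the left, yielding
\begin{equation*}
C_1 \|e_\delta\|_{L^2(\Omega)}^2 \le A_{\Omega,\Omega}(e_\delta, e_\delta) \le C\delta^2 \|e_\delta\|_{L^2(\Omega)},
\end{equation*}
which divides out to the claimed $\mathcal{O}(\delta^2)$ rate. The main obstacle is a subtle piece of bookkeeping at the boundary rather than the final estimate itself: for $\mathbf{x}$ in the inner strip $\Omega_\gamma^-$ the quantity $\L_\delta \widetilde{u_0}(\mathbf{x})$ sees values of $\widetilde{u_0}$ on $\Omega_\gamma^+$, so the pointwise Taylor estimate (\ref{classicalC^4result}) must rely on the fixed $C^4$ extension of $u_0$ to $\Omega^+$ introduced after (\ref{local}), rather than on the regularity of $u_0$ on $\Omega$ alone. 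Once this is verified and the mean-zero property of $e_\delta|_\Omega$ is recorded, the remaining steps are routine.
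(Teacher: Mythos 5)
Your argument is correct and follows the same route as the paper's proof: subtract the two weak forms via the Green's identity \eqref{eqn1.3}, cancel the $\Omega_\gamma^-$ boundary terms using $g_{in}=\N_\Omega^-\widetilde{u_0}$, bound the residual $f_{in}-\L_\delta\widetilde{u_0}$ by $\mathcal{O}(\delta^2)$ using \eqref{classicalC^4result} and \eqref{df-result}, and close with $v=e_\delta$, Cauchy--Schwarz, and the Poincar\'e inequality of Lemma~\ref{ballpoincare}. Your bookkeeping (zero mean of $e_\delta|_\Omega$, reliance on the $C^4$ extension $\widetilde{u_0}$ on $\Omega^+$ rather than $u_0$ on $\Omega$) makes explicit what the paper leaves implicit, but the substance is identical.
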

    Exactly the same as the inner Neumann boundary situation,  as for outer Neumann boundary operator, noting that $u_\delta^*$ is the solution of (\ref{nonlocalcp}), considering problem (\ref{local}) and (\ref{nonlocalcp}), it has that :
    \begin{equation}\label{eqn418def}
        (L_\delta (u_\delta^*-\widetilde{u_0}),v)_\Omega+(\mathcal{O}(\delta^2),v)_\Omega= \frac{1}{S(\Omega)}\left(\int_{\Omega_\gamma^+} \N_{\Omega}^{*} \widetilde{u_0}d\mathbf{x}-\int_{\partial\Omega} gd\mathbf{x} ,v\right)_\Omega=(\mathcal{O}(\delta^2),v)_\Omega.
    \end{equation}
    Splitting the first term, by the boundary condition in \ref{nonlocalcp}, (\ref{eqn418def}) can be simplified as:
    \begin{align}
        &\int_{\Omega^+}\int_{\Omega^+}[(u_\delta^*-\widetilde{u_0})(\mathbf{x})-(u_\delta^*-\widetilde{u_0})(\mathbf{y})]\gamma_\delta(\mathbf{x},\mathbf{y})v(\mathbf{x})d\mathbf{y}d\mathbf{x}=(\mathcal{O}(\delta^2),v)_\Omega.
    \end{align}
    Taking $v=u_\delta^*-\widetilde{u_0}$, from the Poincar\'e inequality (\ref{ballpoincare}) and the estimation of $df_{out}$ (\ref{df*result}), it can be obtained from the above equation that:
    \begin{align}\label{modelresult}
        \|u_\delta^*-u_0\|^2_{L^2(\Omega)}&\leq C A_{\Omega,\Omega}(u_\delta^*-\widetilde{u_0},u_\delta^*-\widetilde{u_0})\leq C A_{\Omega^+,\Omega^+}(u_\delta^*-\widetilde{u_0},u_\delta^*-\widetilde{u_0})\nonumber\\&=(\mathcal{O}(\delta^2),u_\delta-u_0)_\Omega\leq C\delta^2\|u_\delta^*-u_0\|_{L^2(\Omega)},
    \end{align}
    which means
    \begin{equation}
        \|u_\delta^*-u_0\|_{L^2(\Omega)}\leq C\delta^2.
    \end{equation}
    It can be generalized as the following theorem:
    \begin{theorem}\label{thm42}
      Assuming that $u_0\in C^4(\Omega)$, the local problem (\ref{local}) is second-order asymptotically compatible with the nonlocal problem (\ref{nonlocalcp}), in other words
    \begin{equation}
        \|u_\delta^*-u_0\|_{L^2(\Omega)}=\mathcal{O}(\delta^2),
    \end{equation}
    where $u_\delta^*$ is the solution of (\ref{nonlocalcp}).
\end{theorem}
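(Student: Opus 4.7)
My plan is to mimic the argument used for Theorem \ref{thm41}, but invoke the third nonlocal Green's identity \eqref{intbypartN++} (which is adapted to $\mathcal{N}_\Omega^*$) in place of \eqref{eqn1.3}. First I would fix the $C^4$-extension $\widetilde{u_0}$ of $u_0$ to $\Omega^+$ and recall that for $\mathbf{x}\in\Omega$ the classical Taylor-expansion estimate \eqref{classicalC^4result} gives
\begin{equation*}
\mathcal{L}_\delta \widetilde{u_0}(\mathbf{x})=-\Delta u_0(\mathbf{x})+\mathcal{O}(\delta^2)=f(\mathbf{x})+\mathcal{O}(\delta^2).
\end{equation*}
On the other hand, $u_\delta^*$ solves $\mathcal{L}_\delta u_\delta^*=f_{out}=f+df_{out}$ in $\Omega$, and \eqref{df*result} tells us $df_{out}=\mathcal{O}(\delta^2)$. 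Subtracting, we obtain
\begin{equation*}
(\mathcal{L}_\delta(u_\delta^*-\widetilde{u_0}),v)_\Omega=(\mathcal{O}(\delta^2),v)_\Omega\qquad\text{for any test }v,
\end{equation*}
which is exactly the identity already anticipated in the paragraph preceding the theorem.

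Next I would apply the Green-type identity \eqref{intbypartN++} to the function $w:=u_\delta^*-\widetilde{u_0}$ with test function $v=w$, giving
\begin{equation*}
A_{\Omega^+,\Omega^+}(w,w)-(\mathcal{N}_\Omega^* w,w)_{\Omega_\gamma^+}=(\mathcal{L}_\delta w,w)_\Omega.
\end{equation*}
The key cancellation is that the boundary term vanishes: by the choice $g_{out}=\mathcal{N}_\Omega^*\widetilde{u_0}$ in \eqref{outerconddef} and the nonlocal Neumann condition $\mathcal{N}_\Omega^* u_\delta^*=g_{out}$ in \eqref{nonlocalcp}, we have $\mathcal{N}_\Omega^* w\equiv 0$ on $\Omega_\gamma^+$. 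Hence
\begin{equation*}
A_{\Omega^+,\Omega^+}(w,w)=(\mathcal{O}(\delta^2),w)_\Omega.
\end{equation*}

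For the coercivity step I would note that $w$ agrees with $u_\delta^*-u_0$ on $\Omega$, and since both $u_\delta^*$ and $u_0$ have mean zero on $\Omega$, so does $w|_\Omega$; thus the nonlocal Poincar\'e inequality in Lemma~\ref{ballpoincare} yields $C_1\|w\|_{L^2(\Omega)}^2\le A_{\Omega,\Omega}(w,w)$. Because the kernel is nonnegative and $\Omega\subset\Omega^+$, we also have $A_{\Omega,\Omega}(w,w)\le A_{\Omega^+,\Omega^+}(w,w)$, so
\begin{equation*}
C_1\|u_\delta^*-u_0\|_{L^2(\Omega)}^2\le(\mathcal{O}(\delta^2),u_\delta^*-u_0)_\Omega\le C\delta^2\|u_\delta^*-u_0\|_{L^2(\Omega)}
\end{equation*}
by Cauchy--Schwarz on the right-hand side, and dividing by $\|u_\delta^*-u_0\|_{L^2(\Omega)}$ gives the claimed $\mathcal{O}(\delta^2)$ bound. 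This is essentially the chain \eqref{modelresult} written out with the boundary term handled explicitly.

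The main obstacle I foresee is the correct use of the extended bilinear form $A_{\Omega^+,\Omega^+}$: one must verify that the Poincar\'e-type control of $\|w\|_{L^2(\Omega)}$ still applies when the energy is measured on $\Omega^+$ rather than $\Omega$, which is exactly where the monotonicity $A_{\Omega,\Omega}\le A_{\Omega^+,\Omega^+}$ and the mean-zero constraint on $\Omega$ (not on $\Omega^+$) come in. Apart from that, the estimates on $df_{out}$ and on the consistency error $\mathcal{L}_\delta\widetilde{u_0}+\Delta u_0$ are already in place, so the argument reduces to the two ingredients above and a single Cauchy--Schwarz step.
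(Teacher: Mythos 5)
Your proof is correct and follows essentially the same route as the paper's: both derive $(\mathcal{L}_\delta(u_\delta^*-\widetilde{u_0}),v)_\Omega=(\mathcal{O}(\delta^2),v)_\Omega$ from the consistency estimate \eqref{classicalC^4result} together with $df_{out}=\mathcal{O}(\delta^2)$, use the Green identity \eqref{intbypartN++} with the nonlocal Neumann condition to eliminate the boundary term and reduce to $A_{\Omega^+,\Omega^+}(w,w)=(\mathcal{O}(\delta^2),w)_\Omega$, and close via the nonlocal Poincar\'e inequality combined with the monotonicity $A_{\Omega,\Omega}\le A_{\Omega^+,\Omega^+}$ and Cauchy--Schwarz. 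The only stylistic difference is that you spell out the vanishing of $\mathcal{N}_\Omega^*w$ explicitly, whereas the paper compresses this step into the phrase ``by the boundary condition.''
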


\section{Finite element analysis}
In finite element computations for nonlocal models, standard element-wise quadrature may incur large errors due to spherical interaction neighborhoods. A common remedy is the ``approximate ball'' method of D'Elia et al.~\cite{Cookbook}, which replaces each intersection by a polygonal surrogate to enable consistent integration. 
We now consider the approximate-ball-strategy finite element method developed in \cite{Cookbook} to compute the above nonlocal Neumann problems, and present the corresponding AC error estimates. 

\subsection{Preliminaries}
We first revisit the approximate-ball FEM. Denote the spherical interaction neighborhood by
\(B_\delta(\mathbf{x})\),  its inscribed polygonal surrogate by \(B_{\delta,\mathrm{pol}}(\mathbf{x})\).
Let \(n_\delta^{\mathbf{x}}\) be the number of sides of \(B_{\delta,\mathrm{pol}}(\mathbf{x})\) and set
\(n_\delta := \sup_{\mathbf{x}\in\Omega} n_\delta^{\mathbf{x}}\).
For \(\mathbf{x}\in\Omega_\gamma^{+}\), denote \(B_{\delta,\mathrm{pol}}(\mathbf{x})\) by the inscribed
polygon of \(B_\delta(\mathbf{x})\cap\Omega^{+}\).
Precise geometric conditions (e.g., weak quasi-uniformity) will be stated later. Define the indicator and the modified polygon kernel function respectively by 
    \begin{equation*}
        \chi_{B_\delta,pol}(\mathbf{x},\mathbf{y})= \left\{
        \begin{aligned}
            &1, \quad&&\mathbf{y}\in B_{\delta,pol}(\mathbf{x}), \\
            &0, &&\mathbf{y}\notin B_{\delta,pol}(\mathbf{x}),
        \end{aligned}
        \right.
        \qquad  \gamma_{\delta,pol}(\mathbf{x},\mathbf{y})=\gamma_\delta(\mathbf{x},\mathbf{y})\chi_{B_\delta,pol}(\mathbf{x},\mathbf{y}).
    \end{equation*}

When Euclidean balls $B_\delta(x)$ are approximated by inscribed polygons, the induced
kernel $\gamma_{\delta,\mathrm{pol}}(x,y)$ on $B_\delta(x)\cap\Omega^+$ may lose the symmetry
$\gamma_{\delta,\mathrm{pol}}(x,y)=\gamma_{\delta,\mathrm{pol}}(y,x)$. Define the associated operator by
\begin{equation}\label{eq:L-pol}
\L_{\delta,\mathrm{pol}}u(x):=\int_{y\in\mathbb{R}^d}\big(u(\mathbf{x})-u(\mathbf{y})\big)\,\gamma_{\delta,\mathrm{pol}}(\mathbf{x},\mathbf{y})\,d\mathbf{y},
\qquad \mathbf{x}\in\Omega\subset\mathbb{R}^d .
\end{equation}
For a pure Neumann problem we require the compatibility
$(f,1)_\Omega + (g,1)_{\partial\Omega}=0$, which in the nonlocal formulation corresponds to
$(\L_\delta u,1)_\Omega + (\N_\Omega ^\pm u,1)_{\Omega_\gamma^\pm}=0$.
With \eqref{eq:L-pol}, taking inner Neumann problem as an example, the volume part becomes
\begin{align}\label{eqn507new}
(\L_{\delta,\mathrm{pol}}u,1)_\Omega+(\N_\Omega ^- u,1)_{\Omega_\gamma^-}
&=\frac12\int_\Omega\int_\Omega\!\big(u(\mathbf{x})-u(\mathbf{y})\big)\,\big(\gamma_{\delta,\mathrm{pol}}(\mathbf{x},\mathbf{y})-\gamma_{\delta,\mathrm{pol}}(\mathbf{y},\mathbf{x})\big)\,d\mathbf{y}\,d\mathbf{x},
\end{align}
which does not vanish in general if $\gamma_{\delta,\mathrm{pol}}$ is asymmetric.
Hence the compatibility may fail and the Neumann problem can become ill-posed/unstable.
To restore compatibility we adopt the variable–exchange symmetrization. Because of the symmetry of the kernel $\gamma_\delta$, (\ref{nloperator}) can be written as 
\begin{equation}
    \L_\delta u(\mathbf{x})=\int_{\Omega^+}(u(\mathbf{x})-u(\mathbf{y}))\frac{\gamma_\delta(\mathbf{x},\mathbf{y})+\gamma_\delta(\mathbf{y},\mathbf{x})}{2}d\mathbf{y}d\mathbf{x}.
\end{equation}
Make approximate balls to points $\mathbf{x}$ and $\mathbf{y}$ respectively, the kernel turns out to be the follows:

\begin{definition}The symmetric modified kernel is defined by 
    \begin{equation}
        \gamma_{\delta,sym}(\mathbf{x},\mathbf{y})=\frac{\gamma_{\delta,pol}(\mathbf{x},\mathbf{y})+\gamma_{\delta,pol}(\mathbf{y},\mathbf{x})}{2}.
    \end{equation}
\end{definition}
Thus, the modified  nonlocal operator for the approximate ball is as follows:
    \begin{equation}\label{nloperatoreu1}
        \L_{\delta,sym} u(\mathbf{x})=\int_{\mathbf{y}\in \R^d}[u(\mathbf{x})-u(\mathbf{y})]\gamma_{\delta,sym}(\mathbf{x},\mathbf{y})d\mathbf{y},\quad \mathbf{x}\in \Omega\subset R^d.
    \end{equation}
    Because of the symmetry of the kernel $\gamma_{\delta,pol}$, we can define the bilinear form as follows:
    \begin{equation}
        A^{sym}_{\Omega,\Omega}(u,v)=\int_\Omega\int_\Omega (u(\mathbf{x})-u(\mathbf{y}))(v(\mathbf{x})-v(\mathbf{y}))\gamma_{\delta,sym}(\mathbf{x},\mathbf{y})d\mathbf{y}d\mathbf{x}.\nonumber
    \end{equation}
    And the corresponding energy semi-norm is that 
    \begin{equation}
        |u|_{\delta,\Omega}^{sym}=\left(\int_\Omega\int_\Omega (u(\mathbf{x})-u(\mathbf{y}))^2\gamma_{\delta,sym}(\mathbf{x},\mathbf{y})d\mathbf{y}d\mathbf{x}\right)^{1/2},\nonumber
    \end{equation}
    which can introduce the energy space:
    \begin{equation}
        S_{\delta,sym}(\Omega)=\left\{ u\in L^2(\Omega):|u|_{\delta,\Omega}^{sym}<\infty,\int_\Omega ud\mathbf{x}=0\right\}.\nonumber
    \end{equation}
Thus, we can get the corresponding Neumann boundary condition by changing the nonlocal kernel: 
   \begin{equation}\label{}
    \left\{
    \begin{aligned}
    &\N^-_{\Omega,sym} u(\mathbf{x})=  -\int_{\mathbf{y}\in\Omega_{\gamma}^+}[u(\mathbf{x})-u(\mathbf{y})]\gamma_{\delta,sym}(\mathbf{x},\mathbf{y})d\mathbf{y},\quad \mathbf{x}\in \Omega_{\gamma}^-,\\
    &\N^*_{\Omega,sym} u(\mathbf{x})= \int_{\mathbf{y}\in\Omega^+}[u(\mathbf{x})-u(\mathbf{y})]\gamma_{\delta,sym}(\mathbf{x},\mathbf{y})d\mathbf{y},\quad \mathbf{x}\in \Omega_{\gamma}^+.
\end{aligned}
    \right.
\end{equation}
Finally, we can define the corresponding modified  nonlocal inner/outer Neumann Poisson equations by
\begin{equation}\label{problemsymN-}
    \left\{
    \begin{aligned}
    &\L_{\delta,sym} u= f_{in,sym}, \quad in \quad \Omega,\\
    &\N^-_{\Omega,sym} u=\N^-_{\Omega,sym} \widetilde{u_0}, \quad in \quad\Omega_\gamma^-, \quad\int_{\mathbf{x}\in\Omega} u d\mathbf{x}=0,
\end{aligned}
    \right.
\end{equation}
\begin{equation}\label{problemsymN*}
    \left\{
    \begin{aligned}
    &\L_{\delta,sym} u= f_{out,sym}, \quad in \quad \Omega,\\
    &\N^*_{\Omega,sym} u=\N^*_{\Omega,sym} \widetilde{u_0}, \quad in \quad \Omega_\gamma^+,\quad\int_{\mathbf{x}\in\Omega} u d\mathbf{x}=0,
\end{aligned}
    \right.
\end{equation}
Here the right-hand side term $f_{in,sym}$ and $f_{out,sym}$ are defined as follows:
\begin{equation}
    \left\{
    \begin{aligned}
        &f_{in,sym}:=f+\frac{1}{S(\Omega)}\left(\int_{\partial\Omega} gd\mathbf{x}-\int_{\Omega_\gamma^-} \N_{\Omega,sym}^- \widetilde{u_0}d\mathbf{x}\right):=f+df_{sym}^{in},\\
        &f_{out,sym}:=f+\frac{1}{S(\Omega)}\left(\int_{\partial\Omega} gd\mathbf{x}-\int_{\Omega_\gamma^+} \N_{\Omega,sym}^* \widetilde{u_0}d\mathbf{x}\right):=f+df_{sym}^{out}.
    \end{aligned}
    \right.
\end{equation}

By the symmetry of the kernel $\gamma_{\delta,\mathrm{sym}}$, (\ref{eqn507new}) holds identically; hence (\ref{problemsymN-}) and (\ref{problemsymN*}) are both compatible. \YS{Similar with the proof of (\ref{df-result}) and (\ref{df*result}), we state the auxiliary estimations for the compatitble term $df_{sym}^{in}$ and $df_{sym}^{out}$.}
\begin{lemma}\label{dfapproximation}
  For the compatible modified  term $df_{sym}^{in}$ and $df_{sym}^{out}$, it holds
    \begin{equation}
    \begin{aligned}
        &df_{sym}^{s}\leq \sup_{\mathbf{x}\in\Omega}   |\Delta u_0(\mathbf{x})+\L_{\delta,sym} \widetilde{u_0}(\mathbf{x})|\quad(s=in ~or~out).
    \end{aligned}
    \end{equation}
\end{lemma}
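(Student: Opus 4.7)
The plan is to mirror the derivation of estimates~(\ref{df-result}) and~(\ref{df*result}) obtained earlier for the original kernel, replacing $\gamma_\delta$ by the symmetrized kernel $\gamma_{\delta,sym}$ throughout. Concretely, I would rewrite the boundary integral $\int_{\partial\Omega} g\,d\mathbf{x}$ as a volume integral of $\Delta u_0$ via the classical divergence theorem, and rewrite the nonlocal boundary integral $\int_{\Omega_\gamma^-} \N^-_{\Omega,sym}\widetilde{u_0}\,d\mathbf{x}$ (or its outer analogue) as a volume integral of $\L_{\delta,sym}\widetilde{u_0}$ via a symmetric version of the nonlocal Green's first identity. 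The desired sup estimate then follows by an elementary absolute-value argument and the identity $\int_\Omega 1\,d\mathbf{x}=S(\Omega)$.

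For the inner case, since $u_0$ satisfies~(\ref{local}), the divergence theorem gives $\int_{\partial\Omega} g\,d\mathbf{x}=\int_{\partial\Omega}\partial_{\mathbf{n}}u_0\,d\mathbf{x}=\int_\Omega \Delta u_0\,d\mathbf{x}$. On the nonlocal side, the algebraic manipulation that produced~(\ref{eqn1.3}) relies only on the symmetry of the kernel under $(\mathbf{x},\mathbf{y})\leftrightarrow(\mathbf{y},\mathbf{x})$, which is precisely what the construction of $\gamma_{\delta,sym}$ restores; hence
$(\L_{\delta,sym}\widetilde{u_0},1)_\Omega=A^{sym}_{\Omega,\Omega}(\widetilde{u_0},1)-(\N^-_{\Omega,sym}\widetilde{u_0},1)_{\Omega_\gamma^-}$,
and the bilinear form on the right vanishes because its integrand carries the factor $1-1=0$. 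Therefore $\int_{\Omega_\gamma^-}\N^-_{\Omega,sym}\widetilde{u_0}\,d\mathbf{x}=-\int_\Omega\L_{\delta,sym}\widetilde{u_0}\,d\mathbf{x}$. Substituting both identities into the definition of $df_{sym}^{in}$ yields
$df_{sym}^{in}=\frac{1}{S(\Omega)}\int_\Omega\bigl(\Delta u_0(\mathbf{x})+\L_{\delta,sym}\widetilde{u_0}(\mathbf{x})\bigr)d\mathbf{x}$,
and the claimed bound follows by pulling the absolute value inside the integral.

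For the outer case, the argument is essentially identical, except that one uses the Green-type identity~(\ref{intbypartN++}) in its symmetrized form to rewrite $(\L_{\delta,sym}\widetilde{u_0},1)_\Omega=-(\N^*_{\Omega,sym}\widetilde{u_0},1)_{\Omega_\gamma^+}$, again exploiting the vanishing of $A^{sym}_{\Omega^+,\Omega^+}(\cdot,1)$ on constant test functions. Combining with the divergence theorem for $u_0$ gives the analogous representation of $df_{sym}^{out}$ and the same sup bound.

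The only subtle point, and the main obstacle, is verifying that the nonlocal Green-type identities~(\ref{eqn1.3}) and~(\ref{intbypartN++}) carry over after replacing $\gamma_\delta$ by $\gamma_{\delta,sym}$. Since those derivations depend solely on kernel symmetry, and $\gamma_{\delta,sym}$ was constructed precisely to restore this symmetry destroyed by the polygonal approximation, the identities transfer with no modification. Note in particular that, unlike in Section~3, we are not attempting to extract an $\mathcal{O}(\delta^2)$ rate here---only the uniform bound above---so no use of~(\ref{classicalC^4result}) or of any regularity beyond what is needed to give meaning to $\Delta u_0$ and $\L_{\delta,sym}\widetilde{u_0}$ pointwise is required.
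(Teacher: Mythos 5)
Your proof is correct and follows exactly the paper's intended argument, which it gives only implicitly by stating that the proof ``is entirely the same as the proof of \eqref{df-result} and \eqref{df*result}'': namely, use the divergence theorem on $\int_{\partial\Omega}g$, use the symmetrized nonlocal Green's identity (valid since $\gamma_{\delta,\mathrm{sym}}$ is symmetric and $A^{sym}_{\Omega,\Omega}(\cdot,1)=0$, resp. $A^{sym}_{\Omega^+,\Omega^+}(\cdot,1)=0$) to rewrite $-\int_{\Omega_\gamma^\pm}\N_{\Omega,\mathrm{sym}}\widetilde{u_0}$ as $\int_\Omega\L_{\delta,\mathrm{sym}}\widetilde{u_0}$, then bound the resulting mean value by the supremum. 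Your closing observation that no $\mathcal{O}(\delta^2)$ consistency estimate or extra regularity is needed at this stage is also accurate.
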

The proof of Lemma \ref{dfapproximation} is entirely the same as the proof of (\ref{df-result}) and (\ref{df*result}).

\subsection{Convergence of the nonlocal solutions with polygonal approximations}\label{section52} 

In this section, we discuss the asymptotic compatibility between solutions of the modified  inner Neumann problem (\ref{problemsymN-}) and solutions of the local Poisson-Neumann problem. Firstly, we  consider two basic bounds on the interaction kernel $\gamma_\delta$ as follows:\\
{\bf 1. Lower bound (interior positivity).} There exists a constant $c_0>0$, independent of $\delta$, such that
\begin{equation}\label{lowbound}
    \widetilde{\gamma}_\delta(r)\ \ge\ c_0\,\delta^{-4}, \qquad \forall\, r\in\Big[0,\tfrac{\delta}{2}\Big].
\end{equation}
Equivalently, $0<M_1<\widetilde{\gamma}_\delta(r)$ on $[0,\delta/2]$ with $M_1\delta^{4}=C$ for some constant $C>0$ independent of $\delta$. \\
{\bf 2. Upper bound (neighborhood mass).} There exists a constant $C>0$, independent of $\delta$, such that
\begin{equation}\label{supbound}
    G(\gamma_{\delta}) \;:=\; \sup_{{\bf x} \in \Omega^+} \int_{B_{\delta}({\bf x}) \cap \Omega^+} \gamma_{\delta}({\bf x}, {\bf y}) \, d{\bf y}
    \;\le\; C\,\delta^{-2}.
\end{equation}

We now introduce the concept of weakly quasi-uniform inscribed polygons \cite{du2024errorestimatesfiniteelement,DuXieYin2022} as follows:
\begin{definition}[\cite{DuXieYin2022,du2024errorestimatesfiniteelement}]
    A family of inscribed polygons \(\{B_{\delta,pol}(\mathbf{x})\}\)$(\mathbf{x} \in \Omega)$ is called weakly quasi-uniform if there exist two constants \(C_1\) and \(C_2 > 0\) such that \(\forall \delta > 0\), the following two bounds hold
\[
\sup_{\mathbf{x} \in \Omega} \frac{H_{\delta,\mathbf{x}}}{\delta \sin (\pi / n^x_\delta)} \leq 2C_1,
\]
where \(H_{\delta,\mathbf{x}}\) stands for the length of the longest side of the polygon \(B_{\delta,pol}(\mathbf{x})\), and

\[
\sup_{x \in \Omega} \frac{\delta}{r_{\delta,\mathbf{x}}} \leq C_2,
\]
where $r_{\delta,\mathbf{x}}$ stands for the radius of the largest inscribed circle (centered on $\mathbf{x}$) of $B_{\delta,pol}$ and we  define that 
\[
\underline{r}(n_\delta)=\inf_{\mathbf{x}\in \Omega} r_{\delta,\mathbf{x}}.
\]
\end{definition}
An illustration  of the above mentioned parameters is shown in Figure \ref{figrep:env}.
\begin{figure}[h]
    \centering
    \includegraphics[width=45mm]{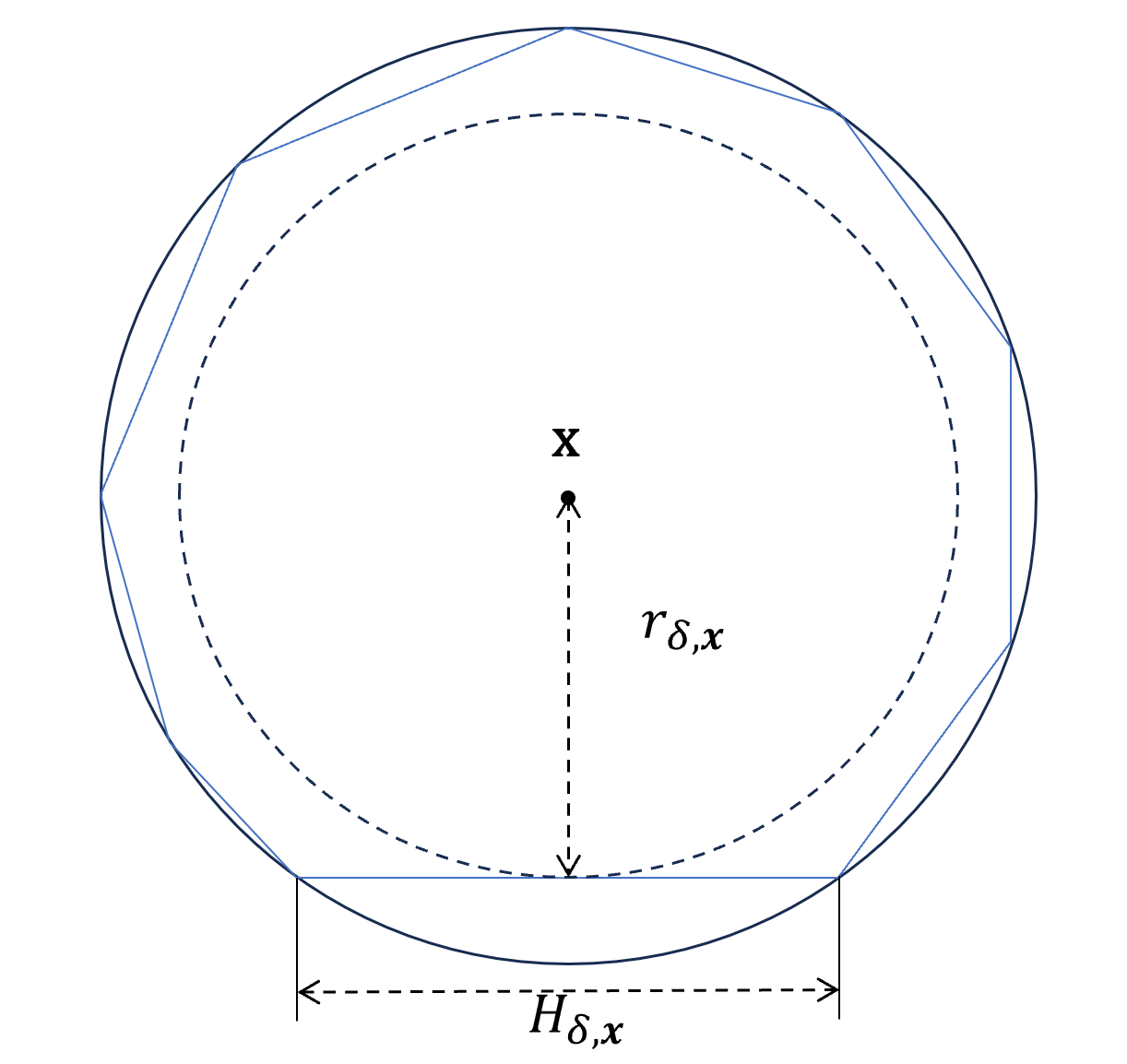}
    \caption{inscribed polygon in $B_\delta(\mathbf{x})$}
    \label{figrep:env}
    \end{figure}

Thus, we now present  the modified  Poincar\'e inequality for the symmetry kernel mentioned in \cite{DuXieYin2022}.

\begin{lemma}[\cite{DuXieYin2022,ponce2004,Tadele2010}] \label{poincaresysp}
    Assume that $\{\delta_k\}\rightarrow0$ is a sequence of nonlocal horizons. If  it holds:
    \begin{equation}
        \lim_{k \to \infty} \int_{\|\mathbf{y}-\mathbf{x}\|_2 \leq \underline{r}(n_\delta)} |y_i - x_i|^2 \gamma_{\delta_k}(\mathbf{x}, \mathbf{y}) \, d\mathbf{y} = 1, \quad i = 1, 2,
    \end{equation}
    then, there exist $\delta_0>0$ and $C>0$ such that $\forall \delta<\delta_0$ and $u\in S_{\delta,sym}(\Omega)$:
    \begin{equation}
        \|u\|_{L^2(\Omega)}\leq 
        C_1|u|^{sym}_{\delta,\Omega},
    \end{equation}
    where $C_1=C_1(\Omega,\gamma)>0$.
    Futhermore, if the conditions of Lemma \ref{ballpoincare} hold, it follows from Lemma \ref{ballpoincare}  that:
    \begin{equation}
        |u|^{sym}_{\delta,\Omega}\leq |u|_{\delta,\Omega}\leq C_2\delta^{-1}\|u\|_{L^2(\Omega)}.
    \end{equation}
\end{lemma}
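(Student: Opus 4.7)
My plan is to reduce the claimed symmetric-kernel Poincar\'e estimate to the classical radial-kernel nonlocal Poincar\'e inequality of \cite{ponce2004,Tadele2010}, by isolating the part of $\gamma_{\delta,sym}$ that is genuinely radial. The key starting observation is geometric: whenever $|\mathbf{x}-\mathbf{y}|\leq \underline{r}(n_{\delta_k})$, the definition of $\underline{r}$ forces both $\mathbf{y}\in B_{\delta_k,pol}(\mathbf{x})$ and $\mathbf{x}\in B_{\delta_k,pol}(\mathbf{y})$, so $\gamma_{\delta_k,pol}(\mathbf{x},\mathbf{y})=\gamma_{\delta_k,pol}(\mathbf{y},\mathbf{x})=\gamma_{\delta_k}(\mathbf{x},\mathbf{y})$ in this common neighborhood. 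In particular, $\gamma_{\delta_k,sym}$ coincides with the original radial kernel $\gamma_{\delta_k}$ inside the ball of radius $\underline{r}(n_{\delta_k})$.

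I would then introduce the truncated kernel $\widehat{\gamma}_{\delta_k}(\mathbf{x},\mathbf{y}):=\gamma_{\delta_k}(\mathbf{x},\mathbf{y})\,\chi_{\{|\mathbf{x}-\mathbf{y}|\leq \underline{r}(n_{\delta_k})\}}$, which is symmetric, radial, and, thanks to the hypothesis, satisfies $\int|y_i-x_i|^2\widehat{\gamma}_{\delta_k}\,d\mathbf{y}\to 1$ as $\delta_k\to 0$. This is precisely the second-moment normalization required by the Ponce/Tadele theorem, applied on the mean-zero subspace underlying $S_{\delta,sym}(\Omega)$. That theorem then produces a constant $C_1=C_1(\Omega,\gamma)$, uniform for small $\delta_k$, such that $\|u\|_{L^2(\Omega)}^2 \leq C_1^2 \int_\Omega\int_\Omega (u(\mathbf{x})-u(\mathbf{y}))^2 \widehat{\gamma}_{\delta_k}(\mathbf{x},\mathbf{y})\,d\mathbf{y}\,d\mathbf{x}$. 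Since $\widehat{\gamma}_{\delta_k}\leq \gamma_{\delta_k,sym}$ pointwise by construction, the right-hand side is bounded above by $(|u|^{sym}_{\delta_k,\Omega})^2$, which yields the desired lower Poincar\'e estimate.

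For the upper bounds, the definition of $\gamma_{\delta,pol}$ gives $\gamma_{\delta,pol}\leq \gamma_\delta$, and combined with the radial symmetry of $\gamma_\delta$ this upgrades to $\gamma_{\delta,sym}\leq \gamma_\delta$; hence $|u|^{sym}_{\delta,\Omega}\leq |u|_{\delta,\Omega}$, and the final bound $|u|_{\delta,\Omega}\leq C_2\delta^{-1}\|u\|_{L^2(\Omega)}$ is immediate from Lemma \ref{ballpoincare}. The main obstacle I anticipate is ensuring that the constant $C_1$ delivered by the Ponce/Tadele framework does not degenerate along the sequence $\{\delta_k\}$; this uniformity relies on the weak quasi-uniformity assumption $\delta/\underline{r}(n_\delta)\leq C_2$, which keeps $\{\widehat{\gamma}_{\delta_k}\}$ an admissible nonlocal family (its effective support shrinks at a rate comparable to $\delta_k$) and therefore allows the standard compactness-by-contradiction argument to run uniformly in $k$.
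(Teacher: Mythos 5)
Your argument is correct and matches the route the paper indicates in the remark following the lemma: the paper gives no self-contained proof, only a pointer to the precompactness lemma of \cite{DuXieYin2022} together with Theorem~1.2 of \cite{ponce2004}, and your reduction via the truncated radial kernel $\widehat{\gamma}_{\delta_k}$ on the inscribed ball of radius $\underline{r}(n_{\delta_k})$ is exactly the bridge that makes those citations apply. The key observations you supply—that $\gamma_{\delta,sym}=\gamma_\delta$ whenever $|\mathbf{x}-\mathbf{y}|\le\underline{r}(n_\delta)$, and the pointwise chain $\widehat{\gamma}_{\delta_k}\le\gamma_{\delta_k,sym}\le\gamma_{\delta_k}$—are precisely the details the paper's remark leaves implicit.
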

\begin{remark}
    It should be further explained that in \cite{DuXieYin2022}, the modified  Poincar\'e inequality holds when the homogeneous Dirichlet boundary condition is satisfied. However, it establishes a precompact lemma which is still useful for $u\in S_{\delta,sym}(\Omega)$. It also gives out the connections between the precompact lemma and Theorem 1.2 in \cite{ponce2004}. So, combining the results in \cite{DuXieYin2022} and \cite{ponce2004}. The modified  Poincar\'e inequality above for $u\in S_{\delta,sym}(\Omega)$ can be proved  directly.
\end{remark}
We now introduce two functions in \cite{du2024errorestimatesfiniteelement} to describe the kernel, namely 
\begin{equation*}
    \gamma(s)=\delta^4\gamma_\delta(\delta s), \qquad \Phi(t)=\pi\int_0^t\rho^3\gamma(\rho)d\rho, \quad\Phi^{'}(t)=\pi t^3\gamma(t).
\end{equation*}
\begin{lemma}[\cite{du2024errorestimatesfiniteelement}]\label{Luerrorsym}
    Suppose that the kernel $\gamma_\delta $ satisfies (\ref{supbound}) and (\ref{lowbound}), $\widetilde{u_0}\in C^4(\Omega^+)$, $\{B_{\delta,pol}\}$ is a weakly quasi-uniform family of inscribed polygons. If $n_\delta\rightarrow \infty$ as $\delta\rightarrow0$, then it holds that:
    \begin{equation}\label{resultinduanalysis}
        -\Delta u_0(\mathbf{x})-\L_{\delta,sym} \widetilde{u_0}(\mathbf{x})=\mathcal{O}(\delta^{-1}n_\delta^{-\lambda})+\mathcal{O}(\delta^2), \mathbf{x}\in\Omega.
    \end{equation}
    with \(\lambda = 2\) when \(\Phi'(1) \neq 0\), while \(\lambda = 4\) when \(\Phi'(1) = 0\), \(\Phi''(1) \neq 0\). 
    \end{lemma}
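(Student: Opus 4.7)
The plan is to split the consistency error as
\[
-\Delta u_0(\mathbf{x})-\L_{\delta,\mathrm{sym}}\widetilde{u_0}(\mathbf{x})=\bigl(-\Delta u_0(\mathbf{x})-\L_\delta\widetilde{u_0}(\mathbf{x})\bigr)+\bigl(\L_\delta\widetilde{u_0}(\mathbf{x})-\L_{\delta,\mathrm{sym}}\widetilde{u_0}(\mathbf{x})\bigr).
\]
The first bracket is the classical truncation error of the spherical nonlocal Laplacian: a fourth-order Taylor expansion of $\widetilde{u_0}\in C^4(\Omega^+)$ at $\mathbf{x}$, together with the second-moment normalization of Definition~2.1, yields an $\mathcal{O}(\delta^2)$ contribution exactly as in (4.8). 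So the entire burden of the lemma is the second bracket, call it $E_{\mathrm{geo}}(\mathbf{x})$, and the task is to show $E_{\mathrm{geo}}(\mathbf{x})=\mathcal{O}(\delta^{-1}n_\delta^{-\lambda})$.

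For $E_{\mathrm{geo}}$, using $\gamma_{\delta,\mathrm{sym}}=\tfrac12(\gamma_{\delta,\mathrm{pol}}(\mathbf{x},\mathbf{y})+\gamma_{\delta,\mathrm{pol}}(\mathbf{y},\mathbf{x}))$ together with the symmetry of $\gamma_\delta$, I would rewrite the discrepancy as a sum of two integrals of the form $\int_{C(\mathbf{x})}(\widetilde{u_0}(\mathbf{x})-\widetilde{u_0}(\mathbf{y}))\gamma_\delta\,d\mathbf{y}$, where $C(\mathbf{x})=B_\delta(\mathbf{x})\setminus B_{\delta,\mathrm{pol}}(\mathbf{x})$ is the disjoint union of the $n_\delta^{\mathbf{x}}\le n_\delta$ circular caps cut off by the polygon sides. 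On the $k$-th cap, weak quasi-uniformity forces the half-angle $\alpha_k\le C/n_\delta$; in local polar coordinates at $\mathbf{x}$ the cap is $\{(r,\theta):|\theta|<\alpha_k,\,r\in[\delta\cos\alpha_k/\cos\theta,\,\delta]\}$, whose radial extent is $\tfrac{\delta}{2}(\alpha_k^2-\theta^2)+\mathcal{O}(\delta\alpha_k^4)$. Taylor-expanding $\widetilde{u_0}$ at $\mathbf{x}$ to second order with a $C^4$ remainder and rescaling $s=r/\delta$, the radial integrals reduce to $\int_{\cos\alpha_k/\cos\theta}^{1}s^{j}\gamma(s)\,ds$ for $j=2,3$, which can be evaluated by Taylor-expanding $\gamma$ at $s=1$. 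Since $\gamma(1)=\Phi'(1)/\pi$ and $\gamma'(1)=\Phi''(1)/\pi$ under the assumption $\Phi'(1)=0$, the per-cap linear-in-$\nabla\widetilde{u_0}$ contribution is of size $\alpha_k^3/\delta$ in the regime $\Phi'(1)\ne 0$ and of size $\alpha_k^5/\delta$ in the regime $\Phi'(1)=0,\,\Phi''(1)\ne 0$; the Hessian contribution is one power of $\delta$ smaller than the linear one and is absorbed into the $\mathcal{O}(\delta^2)$ remainder, and the $C^4$ remainder is even smaller.

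Summing over the $n_\delta^{\mathbf{x}}$ caps and using $\alpha_k\le C/n_\delta$ produces the two claimed rates $\mathcal{O}(\delta^{-1}n_\delta^{-2})$ and $\mathcal{O}(\delta^{-1}n_\delta^{-4})$ respectively. The second integral in $E_{\mathrm{geo}}$, taken over $\{\mathbf{y}:\mathbf{x}\in C(\mathbf{y})\}$, is of exactly the same form: by weak quasi-uniformity of the family $\{B_{\delta,\mathrm{pol}}(\mathbf{y})\}$ this set is a disjoint union of $\mathcal{O}(n_\delta)$ thin cap-shaped slivers of comparable geometry and obeys the same cap-by-cap estimate. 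The main technical obstacle is maintaining uniformity in $\mathbf{x}\in\Omega$ without relying on rotational symmetry of $B_{\delta,\mathrm{pol}}(\mathbf{x})$: the inscribed polygons are only weakly quasi-uniform, not regular, so one cannot exploit exact cancellation between opposite caps and every estimate must be performed cap by cap using only $\alpha_k\le C/n_\delta$ and the uniform inradius bound, with constants depending only on $C_1,C_2$ and $\Omega$. A secondary subtlety is verifying that the exponent really jumps from $\lambda=2$ directly to $\lambda=4$ rather than to $3$ when $\Phi'(1)$ vanishes, which requires tracking the first two Taylor coefficients of $\gamma$ at $s=1$ and checking that the intermediate $\alpha_k^3$-order term is genuinely absent because the radial integrand is proportional to $(s-1)$ near $s=1$.
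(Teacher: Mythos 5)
The paper does not prove Lemma~\ref{Luerrorsym}; it is cited verbatim from \cite{du2024errorestimatesfiniteelement}, so there is no internal proof to compare your argument against. Judged on its own terms, your reconstruction is on the right track and its structure matches what the cited reference and the surrounding text of this paper (e.g.\ the introduction of $\gamma$ and $\Phi$ immediately before the lemma, and the companion Dirichlet analysis the paper points to) suggest: split off the classical truncation error of the spherical operator, which is $\mathcal{O}(\delta^2)$ for $\widetilde{u_0}\in C^4$ by \eqref{classicalC^4result}, then estimate the geometric discrepancy cap by cap. Your rate bookkeeping is correct: writing the rescaled radial integral as $\int_{\cos\alpha_k/\cos\theta}^{1}s^j\gamma(s)\,ds$ with cap depth $h(\theta)=(\alpha_k^2-\theta^2)/2+\mathcal{O}(\alpha_k^4)$, one gets a per-cap gradient contribution $\mathcal{O}(\delta^{-1}\alpha_k^3)$ when $\gamma(1)=\Phi'(1)/\pi\ne 0$, and $\mathcal{O}(\delta^{-1}\alpha_k^5)$ when $\gamma(1)=0$ because the radial integrand is then genuinely proportional to $(s-1)=\mathcal{O}(\alpha_k^2)$; summing $n_\delta$ caps with $\alpha_k\le C/n_\delta$ gives $\lambda=2$ and $\lambda=4$ respectively, and the jump straight to $\lambda=4$ is exactly as you explain. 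Two small corrections. First, the Hessian piece over the caps is $\delta$ times smaller than the gradient piece and is therefore dominated by $\mathcal{O}(\delta^{-1}n_\delta^{-\lambda})$, not by the $\mathcal{O}(\delta^2)$ truncation remainder as you wrote; this does not affect the conclusion but the attribution is wrong. Second, for the companion integral over $\{\mathbf{y}:\mathbf{x}\in C(\mathbf{y})\}$ the ``disjoint union of $\mathcal{O}(n_\delta)$ slivers'' picture is not needed and is hard to justify when the polygons $B_{\delta,\mathrm{pol}}(\mathbf{y})$ rotate arbitrarily with $\mathbf{y}$; a cruder bound suffices, since that set lies inside the thin annulus $\delta\cos\alpha_{\max}\le|\mathbf{x}-\mathbf{y}|\le\delta$ whose measure is $\mathcal{O}(\delta^2 n_\delta^{-2})$ and on which $\gamma_\delta=\mathcal{O}(\delta^{-4})$ if $\Phi'(1)\neq 0$ and $\mathcal{O}(\delta^{-4}n_\delta^{-2})$ if $\Phi'(1)=0$, which already yields the claimed rates without any structural assumption on how the polygons vary.
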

    Replacing $f$ with $-\Delta u_0$ in the first equations in problem (\ref{problemsymN-}) and (\ref{problemsymN*}), then, using (\ref{resultinduanalysis}), it has 
    \begin{equation}\label{symfrontconclu}
    \left\{
    \begin{aligned}
        &\L_{\delta,sym} (u_{\delta,sym}^{-}-\widetilde{u_0})(\mathbf{x})=\mathcal{O}(\delta^{-1}n_\delta^{-\lambda})+\mathcal{O}(\delta^2)+df_{sym}^{in}, ~\mathbf{x}\in\Omega,
        \\& \L_{\delta,sym} (u_{\delta,sym}^{*}-\widetilde{u_0})(\mathbf{x})=\mathcal{O}(\delta^{-1}n_\delta^{-\lambda})+\mathcal{O}(\delta^2)+df_{sym}^{out}, ~\mathbf{x}\in\Omega.
    \end{aligned}
    \right.
    \end{equation}
From (\ref{symfrontconclu}), we have the following theorem.
\begin{theorem}\label{thm56approxball}
 Supposing the conditions of Lemma \ref{Luerrorsym} hold and noting that the solution of (\ref{problemsymN-}) is $u_{\delta,sym}^-$ and the solution of (\ref{problemsymN*}) is $u_{\delta,sym}^*$, it holds:
    \begin{equation}
    \left\{
    \begin{aligned}
        &\|u_{\delta,sym}^{-}-u_0\|_{L^2(\Omega)}=\mathcal{O}(\delta^{-1}n_\delta^{-\lambda})+\mathcal{O}(\delta^2),
         \\&\|u_{\delta,sym}^{*}-u_0\|_{L^2(\Omega)}=\mathcal{O}(\delta^{-1}n_\delta^{-\lambda})+\mathcal{O}(\delta^2).
    \end{aligned}
    \right.
    \end{equation}
\end{theorem}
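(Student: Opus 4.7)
The plan is to mimic the proofs of Theorems \ref{thm41} and \ref{thm42}, now using the symmetric modified kernel so that the same energy-type argument applies, and then translate the pointwise residual estimate of Lemma \ref{Luerrorsym} into an $L^2$ bound via the modified Poincar\'e inequality of Lemma \ref{poincaresysp}.

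First, I would test the equations in \eqref{symfrontconclu} against a function $v \in S_{\delta,\mathrm{sym}}(\Omega)$ (respectively $S_{\delta}^{*}(\Omega^+)$ in the outer case) and invoke the symmetry of $\gamma_{\delta,\mathrm{sym}}$ to obtain the nonlocal Green's identity with $\L_{\delta,\mathrm{sym}}$ in place of $\L_\delta$. For the inner problem this yields
\begin{equation}
A_{\Omega,\Omega}^{\mathrm{sym}}\!\big(u_{\delta,\mathrm{sym}}^{-}-\widetilde{u_0},\,v\big)-\big(\N_{\Omega,\mathrm{sym}}^{-}(u_{\delta,\mathrm{sym}}^{-}-\widetilde{u_0}),v\big)_{\Omega_\gamma^-}
=\big(\mathcal{O}(\delta^{-1}n_\delta^{-\lambda})+\mathcal{O}(\delta^2)+df_{\mathrm{sym}}^{in},\,v\big)_\Omega.
\end{equation}
The Neumann boundary data in \eqref{problemsymN-} were chosen precisely so that $\N_{\Omega,\mathrm{sym}}^{-}u_{\delta,\mathrm{sym}}^{-}=\N_{\Omega,\mathrm{sym}}^{-}\widetilde{u_0}$ pointwise on $\Omega_\gamma^-$, which kills the boundary term and reduces the identity to a pure bilinear-form equation. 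The analogous manipulation for \eqref{problemsymN*} (using the extended bilinear form on $\Omega^+\times\Omega^+$ and the fact that $\N_{\Omega,\mathrm{sym}}^{*}u_{\delta,\mathrm{sym}}^{*}=\N_{\Omega,\mathrm{sym}}^{*}\widetilde{u_0}$ on $\Omega_\gamma^+$) gives the same structure.

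Next, I would take $v=u_{\delta,\mathrm{sym}}^{-}-\widetilde{u_0}$ (noting that $\int_\Omega u_{\delta,\mathrm{sym}}^{-}\,d\mathbf{x}=0$ by the constraint, and that $u_0$ may be shifted by its mean on $\Omega$ so the difference lies in the constrained energy space). Applying Lemma \ref{poincaresysp} to control $\|u_{\delta,\mathrm{sym}}^{-}-\widetilde{u_0}\|_{L^2(\Omega)}$ by $|u_{\delta,\mathrm{sym}}^{-}-\widetilde{u_0}|_{\delta,\Omega}^{\mathrm{sym}}=\sqrt{A_{\Omega,\Omega}^{\mathrm{sym}}(\cdot,\cdot)}$, and using Cauchy--Schwarz on the right-hand side, one gets
\begin{equation}
\|u_{\delta,\mathrm{sym}}^{-}-u_0\|_{L^2(\Omega)}^2 \;\le\; C\,\Big(\mathcal{O}(\delta^{-1}n_\delta^{-\lambda})+\mathcal{O}(\delta^2)+|df_{\mathrm{sym}}^{in}|\Big)\,\|u_{\delta,\mathrm{sym}}^{-}-u_0\|_{L^2(\Omega)},
\end{equation}
exactly parallel to \eqref{modelresultn-} and \eqref{modelresult}. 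Lemma \ref{dfapproximation} together with Lemma \ref{Luerrorsym} then dominates $|df_{\mathrm{sym}}^{in}|$ by the same $\mathcal{O}(\delta^{-1}n_\delta^{-\lambda})+\mathcal{O}(\delta^2)$, and dividing through finishes the inner estimate. The outer case is identical after replacing $A_{\Omega,\Omega}^{\mathrm{sym}}$ by the $\Omega^+$-extension and using the second inequality in Lemma \ref{poincaresysp}.

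The main obstacle I anticipate is not in any single step but in justifying that the Poincar\'e step goes through cleanly in the symmetric modified setting: the classical Poincar\'e bound holds for $|\cdot|_{\delta,\Omega}$, whereas here we only have access to $|\cdot|_{\delta,\Omega}^{\mathrm{sym}}$ which a priori can be strictly smaller because $\gamma_{\delta,\mathrm{pol}}$ loses mass near the boundary of $B_\delta(\mathbf{x})$. This is exactly why the weakly quasi-uniform hypothesis and the moment-normalization condition of Lemma \ref{poincaresysp} are needed, and I would have to check that the test function $u_{\delta,\mathrm{sym}}^{-}-\widetilde{u_0}$ (after a mean-zero shift) indeed belongs to $S_{\delta,\mathrm{sym}}(\Omega)$ so the lemma applies. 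Once that is verified, the remainder is a direct duplication of the model-error argument from Section 4, and the rate $\mathcal{O}(\delta^{-1}n_\delta^{-\lambda})+\mathcal{O}(\delta^2)$ is inherited from Lemma \ref{Luerrorsym} without further loss.
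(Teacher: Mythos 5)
Your proof is correct and follows essentially the same approach as the paper: test \eqref{symfrontconclu} against $v=u_{\delta,\mathrm{sym}}^{-}-\widetilde{u_0}$, use the boundary condition in \eqref{problemsymN-} to annihilate the $\N^{-}_{\Omega,\mathrm{sym}}$ term, bound $df_{\mathrm{sym}}^{in}$ via Lemmas \ref{dfapproximation} and \ref{Luerrorsym}, and conclude with the symmetric Poincar\'e inequality of Lemma \ref{poincaresysp} (and analogously on $\Omega^+$ for the outer problem). Your closing worry about membership in $S_{\delta,\mathrm{sym}}(\Omega)$ resolves immediately because the paper normalizes $\int_\Omega u_0\,d\mathbf{x}=0$ and imposes $\int_\Omega u_{\delta,\mathrm{sym}}^{-}\,d\mathbf{x}=0$ in \eqref{problemsymN-}, so no mean-zero shift is actually required.
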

\begin{proof}
    We consider the inner Neumann problem (\ref{problemsymN-}) first. By taking the inner product of $u_{\delta,sym}^{-}-u_0$ on both sides of (\ref{symfrontconclu}), it has that:
    \begin{align}\label{eqn5344}
        (\L_{\delta,sym} (u_{\delta,sym}^{-}-\widetilde{u_0}),u_{\delta,sym}^{-}-\widetilde{u_0})
        &=(\mathcal{O}(\delta^{-1}n_\delta^{-\lambda})+\mathcal{O}(\delta^2)+df_{sym}^{in}, u_{\delta,sym}^{-}-\widetilde{u_0})\nonumber\\&
        \leq C(\delta^{-1}n_\delta^{-\lambda}+\delta^2)\|u_{\delta,sym}^{-}-\widetilde{u_0}\|_{L^2(\Omega)},
    \end{align}
  where the second inequality uses the result in Lemma \ref{dfapproximation}.  By the boundary condition in (\ref{problemsymN-}), it hass
    \begin{align}
        (\L_{\delta,sym} (u_{\delta,sym}^{-}-\widetilde{u_0}),u_{\delta,sym}^{-}-\widetilde{u_0})
        &=(|u_{\delta,sym}^{-}-\widetilde{u_0}|^{sym}_{\delta,\Omega})^2-(\N^-_{\delta,sym} (u_{\delta,sym}^{-}-\widetilde{u_0}),u_{\delta,sym}^{-}-\widetilde{u_0})\nonumber\\&=(|u_{\delta,sym}^{-}-\widetilde{u_0}|^{sym}_{\delta,\Omega})^2.\nonumber
    \end{align}
    So that, from the Poincar\'e inequality in Lemma \ref{poincaresysp}, we can have the following result
    \begin{align}\label{symerrorL2}
        \|u_{\delta,sym}^{-}-\widetilde{u_0}\|_{L^2(\Omega)}^2\leq (|u_{\delta,sym}^{-}-\widetilde{u_0}|^{sym}_{\delta,\Omega})^2\leq C(\mathcal{O}(\delta^{-1}n_\delta^{-\lambda})+\mathcal{O}(\delta^2))\|u_{\delta,sym}^{-}-\widetilde{u_0}\|_{L^2(\Omega)}.
    \end{align}
Using the same argument, we obtain 
    \begin{equation}\label{symerrorL2n*}
        \|u_{\delta,sym}^{*}-\widetilde{u_0}\|_{L^2(\Omega)}^2\leq (|u_{\delta,sym}^{*}-\widetilde{u_0}|^{sym}_{\delta,\Omega^+})^2\leq C(\mathcal{O}(\delta^{-1}n_\delta^{-\lambda})+\mathcal{O}(\delta^2))\|u_{\delta,sym}^{*}-\widetilde{u_0}\|_{L^2(\Omega)}.
    \end{equation}

    This completes the proof.
\end{proof}

\subsection{Finite element analysis of the inner Neumann problem (\ref{problemsymN-})}
We now consider an error estimate of FEM for solving the nonlocal Neumann problem (\ref{problemsymN-}). 
Firstly, we consider the results without the approximate ball and define the test space as:
\begin{equation}
    S_{\delta}^\prime(\Omega)=\{u\in L^2(\Omega): |u|_{\delta,\Omega}<\infty\}.
\end{equation}
The variational form of problem (\ref{nonlocalcpinner}) is as follows:
\begin{equation}\label{nonlocalcpinnervar}
        \int_\Omega v(\mathbf{x})\left(\int_\Omega (u_\delta^-(\mathbf{x})-u_\delta^-(\mathbf{y}))\gamma_\delta(\mathbf{x},\mathbf{y})d\mathbf{y}-f_{in}\right)d\mathbf{x}=-\int_{\Omega_\gamma^-} v(\mathbf{x})\int_{\Omega^+_\gamma} (\widetilde{u_0}(\mathbf{x})-\widetilde{u_0}(\mathbf{y}))\gamma_\delta(\mathbf{x},\mathbf{y})d\mathbf{y}d\mathbf{x}, 
\end{equation}
where $\int_\Omega u_\delta^-\,d\mathbf{x}=0$ and $v\in S'_\delta(\Omega)$. 

Let $\mathcal{T}_\delta^h(\Omega)$ be an exact quasi-uniform triangulation of $\Omega$ with $n_t$ nodes, and $\{\phi_i\}_{i=1}^{n_t}$ be the continuous, piecewise-linear nodal basis. Set $V_h^\delta:=\mathrm{span}\{\phi_i\}$. 
The finite element discretization of \eqref{nonlocalcpinnervar} then reads: find $u_{h,\delta}^-\in V_h^\delta(\Omega)$ with $\int_\Omega u_{h,\delta}^-\,d\mathbf{x}=0$ such that, for all $v_h\in V_h^\delta(\Omega)$, it holds that
\begin{align}\label{nonlocalcpinnerfem}
    \frac{1}{2}&\int_\Omega\int_\Omega (v_h(\mathbf{x})-v_h(\mathbf{y}))(u_{h,\delta}^-(\mathbf{x})-u_{h,\delta}^-(\mathbf{y}))\gamma_\delta(\mathbf{x},\mathbf{y})d\mathbf{y}d\mathbf{x}\nonumber\\&=\int_\Omega v_h(\mathbf{x})(f_h(\mathbf{x})+df_h^{in})d\mathbf{x}-\int_{\Omega_\gamma^-} v_h(\mathbf{x})\int_{\Omega_\gamma^+}(\widetilde{u_{0,h}}(\mathbf{x})-\widetilde{u_{0,h}}(\mathbf{y}))\gamma_\delta(\mathbf{x},\mathbf{y})d\mathbf{y}d\mathbf{x},
\end{align}
where in (\ref{nonlocalcpinnerfem})
 $f_h$ is the projection of $f$ on space $V_h^\delta(\Omega)$. 
 
 To calculate the Neumann boundary condition term, we usually use two polygonal regions $D_o$ and $D_i$ which satisfies $D_i\subset\Omega^-\subset\Omega^+\subset D$ and define the quasi-uniform triangular mesh $\mathcal{T}_\delta^h(D)$  over $D$. Here the mesh $\mathcal{T}_\delta^h(D)$ exactly triangulates region $D_i$, $D_o$ and $\Omega$. Because that the support of the kernel $\gamma_\delta(\mathbf{x},\mathbf{y})$ is the circular neighbor of $\mathbf{x}$ $B_\delta(\mathbf{x})$, it can be found that:
 \begin{equation}
     \int_{\Omega_\gamma^-} v_h(\mathbf{x})\int_{\Omega_\gamma^+}(\widetilde{u_{0,h}}(\mathbf{x})-\widetilde{u_{0,h}}(\mathbf{y}))\gamma_\delta(\mathbf{x},\mathbf{y})d\mathbf{y}d\mathbf{x}=\int_{\Omega/D_i} v_h(\mathbf{x})\int_{\Omega_\gamma^+/\Omega}(\widetilde{u_{0,h}}(\mathbf{x})-\widetilde{u_{0,h}}(\mathbf{y}))\gamma_\delta(\mathbf{x},\mathbf{y})d\mathbf{y}d\mathbf{x}.\nonumber
 \end{equation}
 It means that the large domain $D_i$ and the small domain $D_o$ will not cause extra numerical error when we calculate the inner Neumann operator by finite element methods numerically. In (\ref{nonlocalcpinnerfem}), $\widetilde{u_{0,h}}$ is the projection of $\widetilde{u_{0}}$ on space $V_h^\delta(D)$ and $df_h^{in}$ are defined as follows:
\begin{equation}\label{nonlocalcpdf}
    df_h^{in}=\frac{1}{S(\Omega)}\Big(\int_{\partial\Omega} gd\mathbf{x}-\int_{\Omega_\gamma^-} \N_{\Omega}^- \widetilde{u_{0,h}}d\mathbf{x}\Big),
\end{equation}
where $\int_{\partial\Omega} gd\mathbf{x}$ can be calculated directly. 


\YS{\begin{theorem}[Error estimate of FEM for the inner Neumann problem without the approximate ball] \label{thmfemerror}
    Assuming that the conditions of Lemma \ref{Luerrorsym} hold, as for the solution of (\ref{nonlocalcpinnervar}) and (\ref{nonlocalcpinnerfem}), it holds
    \begin{equation}
       \|u_\delta^--u_{h,\delta}^-\|_{L^2(\Omega)}= \mathcal{O}(\delta^2)+\mathcal{O}(\delta^{-1.5}h^2),
    \end{equation}
    where $h$ is the mesh size, $u_\delta^-$ is the solution of inner Neumann problem (\ref{nonlocalcpinner}), $u_{h,\delta}^-$ is the finite element solution of inner Neumann problem in (\ref{nonlocalcpinnerfem}). 
\end{theorem}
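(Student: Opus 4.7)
The plan is a Galerkin-type error analysis adapted to the nonlocal Neumann setting, tracking the $\delta$-dependent constants that enter through the finite element interpolation and the data projection, and closing via the nonlocal Poincar\'e inequality (Lemma~\ref{ballpoincare}). I would first derive an error equation by subtracting the discrete problem~(\ref{nonlocalcpinnerfem}) from the continuous weak form~(\ref{nonlocalcpinnervar}), both tested against $v_h \in V_h^\delta(\Omega)$. Since this subsection still uses exact integration over $B_\delta(\mathbf{x})$, the bilinear forms agree and the identity reduces to
\begin{equation*}
A_{\Omega,\Omega}(u_\delta^- - u_{h,\delta}^-, v_h) = (f - f_h, v_h)_\Omega + (df^{in} - df_h^{in}, v_h)_\Omega - (\N_\Omega^-(\widetilde{u_0} - \widetilde{u_{0,h}}), v_h)_{\Omega_\gamma^-}.
\end{equation*}
Splitting $u_\delta^- - u_{h,\delta}^- = \rho_h + w_h$, with $\rho_h := u_\delta^- - \Pi_h u_\delta^-$ the interpolation residual and $w_h := \Pi_h u_\delta^- - u_{h,\delta}^- \in V_h^\delta(\Omega)$, and then testing with $v_h = w_h$, reduces the problem to bounding each of the four resulting pieces against $|w_h|_{\delta,\Omega}$.

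The routine pieces come next: standard piecewise linear interpolation gives $\|\rho_h\|_{L^2(\Omega)} \lesssim h^2$, so the inverse-type bound in Lemma~\ref{ballpoincare} yields $|\rho_h|_{\delta,\Omega} \lesssim \delta^{-1}h^2$ and hence $|A_{\Omega,\Omega}(\rho_h, w_h)| \lesssim \delta^{-1}h^2\,|w_h|_{\delta,\Omega}$, while $|(f - f_h, w_h)_\Omega| \lesssim h^2\,|w_h|_{\delta,\Omega}$ follows from Cauchy-Schwarz and Poincar\'e. Both contributions are therefore dominated by the target rate.

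The main obstacle is the boundary and compatibility terms involving $e_u := \widetilde{u_0} - \widetilde{u_{0,h}}$. Applying Cauchy-Schwarz to the double integral defining $\N_\Omega^-$, together with the neighborhood-mass bound~(\ref{supbound}) and the $L^2$ interpolation estimate $\|e_u\|_{L^2(\Omega^+)} \lesssim h^2$, produces $\|\N_\Omega^- e_u\|_{L^2(\Omega_\gamma^-)} \lesssim \delta^{-2}h^2$. Pairing this with the crude bound $\|w_h\|_{L^2(\Omega_\gamma^-)} \leq \|w_h\|_{L^2(\Omega)} \lesssim |w_h|_{\delta,\Omega}$ would only yield a suboptimal $\delta^{-2}h^2$ rate; the sharp $\delta^{-1.5}h^2$ instead requires a thin-layer estimate of the form $\|w_h\|_{L^2(\Omega_\gamma^-)} \lesssim \delta^{1/2}|w_h|_{\delta,\Omega}$, which exploits that $\Omega_\gamma^-$ has width $\delta$ and that $w_h$ belongs to the nonlocal energy space. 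The compatibility correction inherits the same rate via $|df^{in} - df_h^{in}| \lesssim |\Omega_\gamma^-|^{1/2}\|\N_\Omega^- e_u\|_{L^2(\Omega_\gamma^-)} \lesssim \delta^{-1.5}h^2$.

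Collecting the four contributions yields $|w_h|_{\delta,\Omega} \lesssim \delta^{-1.5}h^2 + \delta^2$, where the $O(\delta^2)$ term absorbs the residual consistency gap mirroring the continuous model error already present in Theorem~\ref{thm41}. A final application of the Poincar\'e inequality from Lemma~\ref{ballpoincare} transfers this into $\|w_h\|_{L^2(\Omega)}$, and triangle inequality with $\|\rho_h\|_{L^2} = O(h^2)$ delivers the claimed estimate. The most delicate step will therefore be establishing the thin-collar trace bound for $w_h$; without it the rate degrades by a factor $\delta^{-1/2}$.
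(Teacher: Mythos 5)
Your overall plan (Galerkin error equation, split into projection error plus discrete error, bound each piece, close with the nonlocal Poincar\'e inequality) is in the same family as the paper's argument, but two concrete steps fail and the paper's actual decomposition was chosen precisely to avoid them. First, you write $u_\delta^- - u_{h,\delta}^- = \rho_h + w_h$ with $\rho_h = u_\delta^- - \Pi_h u_\delta^-$ and then invoke $\|\rho_h\|_{L^2}\lesssim h^2$, $|\rho_h|_{\delta,\Omega}\lesssim\delta^{-1}h^2$. These are interpolation estimates that require $u_\delta^-$ to have $H^2$ (or $C^2$) regularity, which is not available: the nonlocal solution is only known to lie in $L^2(\Omega)$ with finite energy seminorm, and no regularity theory for $u_\delta^-$ is established or used anywhere in the paper. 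The paper instead introduces an auxiliary continuous solution $u_{au,\delta}^-$ with the discretized data (so that the first piece $u_\delta^- - u_{au,\delta}^-$ is bounded by a pure data-perturbation argument in $S'_\delta(\Omega)$), obtains Galerkin orthogonality and the best-approximation bound $|u_{au,\delta}^- - u_{h,\delta}^-|_{\delta,\Omega}\leq|u_{au,\delta}^- - I_h u_0|_{\delta,\Omega}$, and then compares with the interpolant $I_h u_0$ of the \emph{local} solution (which is $C^4$, hence interpolable) rather than with any interpolant of $u_\delta^-$. The $\mathcal{O}(\delta^2)$ in the theorem statement comes exactly from the middle term $|u_\delta^- - u_0|_{\delta,\Omega}$ in the resulting triangle-inequality chain; your claim that it ``absorbs the residual consistency gap'' is not supported, since no consistency or model error appears in the error equation you wrote, whose right-hand side only carries data-discretization differences.

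Second, the thin-layer trace inequality $\|w_h\|_{L^2(\Omega_\gamma^-)}\lesssim\delta^{1/2}|w_h|_{\delta,\Omega}$ on which you hang the $\delta^{-1.5}h^2$ rate is neither proven by you nor used by the paper, and there is no reason to expect it to hold: an extra $\sqrt{\delta}$ gain over the global Poincar\'e bound $\|w_h\|_{L^2(\Omega)}\lesssim|w_h|_{\delta,\Omega}$ would force $w_h$ to be concentrated away from $\partial\Omega$, which is not a feature of the energy space. The paper obtains the $\sqrt{\delta}$ improvement elementarily: it proves a pointwise bound $\|\N_\Omega^-(\widetilde{u_0}-\widetilde{u_{0,h}})\|_{L^\infty(\Omega_\gamma^-)}\lesssim\delta^{-2}h^2$ from the $W^{2,\infty}$ interpolation error and the kernel-mass bound (\ref{supbound}), and then in the duality pairing applies Cauchy--Schwarz as $\int_{\Omega_\gamma^-}|v|\leq\sqrt{|\Omega_\gamma^-|}\,\|v\|_{L^2(\Omega_\gamma^-)}\lesssim\sqrt{\delta}\,\|v\|_{L^2(\Omega)}$, the gain coming solely from the collar having measure $O(\delta)$. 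Note also that your intermediate bound $\|\N_\Omega^- e_u\|_{L^2(\Omega_\gamma^-)}\lesssim\delta^{-2}h^2$ is not sharp; the same measure argument improves it to $\delta^{-1.5}h^2$, which would render the trace inequality unnecessary even inside your own framework.
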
}
\begin{proof}
    For the convenience of analysis, we introduce an auxiliary problem that  $\forall v\in S^\prime_\delta(\Omega)$, finding $u_{au,\delta}^- \in S_\delta^\prime(\Omega)$, such that:
    \begin{align}\label{auxiliaryinnerfem}
        \frac{1}{2}&\int_\Omega\int_\Omega (v(\mathbf{x})-v(\mathbf{y}))(u_{au,\delta}^-(\mathbf{x})-u_{au,\delta}^-(\mathbf{y}))\gamma_\delta(\mathbf{x},\mathbf{y})d\mathbf{y}d\mathbf{x}\nonumber\\&=\int_\Omega v(\mathbf{x})(f_h(\mathbf{x})+df_h^{in})d\mathbf{x}-\int_{\Omega_\gamma^-} v(\mathbf{x})\int_{\Omega_\gamma^+}(\widetilde{u_{0,h}}(\mathbf{x})-\widetilde{u_{0,h}}(\mathbf{y}))\gamma_\delta(\mathbf{x},\mathbf{y})d\mathbf{y}d\mathbf{x},
    \end{align}
    where $u_{au,\delta}^-$ also satisfies a mean value of 0 in $\Omega$.
    Taking $v\in S_\delta^\prime(\Omega)$, it follows from (\ref{nonlocalcpinnervar}) and (\ref{auxiliaryinnerfem}) that
    \begin{align}\label{eqn549thm518}
         \frac{1}{2}&\int_\Omega\int_\Omega (v(\mathbf{x})-v(\mathbf{y}))((u_{\delta}^--u_{au,\delta}^-)(\mathbf{x})-(u_{\delta}^--u_{au,\delta}^-)(\mathbf{y}))\gamma_\delta(\mathbf{x},\mathbf{y})d\mathbf{y}d\mathbf{x}\nonumber\\&=\int_\Omega v(\mathbf{x})(f(\mathbf{x})-f_h(\mathbf{x}))d\mathbf{x}+\int_\Omega v(\mathbf{x})(df_h^{in}-df_h^{in})d\mathbf{x}+\int_{\Omega_\gamma^-} v(\mathbf{x})N^-_\Omega (\widetilde{u_0}-\widetilde{u_{0,h}})d\mathbf{x}.
    \end{align}
    Applying the interpolation error estimate and (\ref{supbound}) to the second term on the right-hand side, it has
    \begin{align}
        S(\Omega)|df^{in}-df_h^{in}|&\leq \left|\int_{\Omega_\gamma^-}\int_{\Omega_\gamma^+}((\widetilde{u_0}-\widetilde{u_{0,h}})(\mathbf{x})-(\widetilde{u_0}-\widetilde{u_{0,h}})(\mathbf{y}))\gamma_\delta(\mathbf{x},\mathbf{y})d\mathbf{y}d\mathbf{x}\right|\nonumber\\&
        \leq C|2\int_{\Omega_\gamma^-}h^2\|\widetilde{u_0}\|_{W^{2,\infty}}\int_{\Omega_\gamma^+}\gamma_\delta(\mathbf{x},\mathbf{y})d\mathbf{y}d\mathbf{x}|
        \leq C\delta^{-1}h^2.
    \end{align}
    And as for the third term of the right-hand side, we have
    \begin{align}
        |N^-_\Omega (\widetilde{u_0}-\widetilde{u_{0,h}})(x)|&=\left|\int_{\Omega_\gamma^+}((\widetilde{u_0}-\widetilde{u_{0,h}})(\mathbf{x})-(\widetilde{u_0}-\widetilde{u_{0,h}})(\mathbf{y}))\gamma_\delta(\mathbf{x},\mathbf{y})d\mathbf{y}\right|\nonumber\\&
        \leq 2Ch^2\|\widetilde{u_0}\|_{W^{2,\infty}(B_\delta(\mathbf{x}))}\int_{\Omega_\gamma^+}\gamma_\delta(\mathbf{x},\mathbf{y})d\mathbf{y}\leq Ch^2\delta^{-2}.
    \end{align}
    Thus, (\ref{eqn549thm518}) turns out to be
    \begin{align}\label{eqn555newmid}
        \frac{1}{2}&\int_\Omega\int_\Omega (v(\mathbf{x})-v(\mathbf{y}))((u_{\delta}^--u_{au,\delta}^-)(\mathbf{x})-(u_{\delta}^--u_{au,\delta}^-)(\mathbf{y}))\gamma_\delta(\mathbf{x},\mathbf{y})d\mathbf{y}d\mathbf{x}\nonumber\\&\leq C(h^2+\delta^{-1}h^2)\int_\Omega |v(\mathbf{x})|d\mathbf{x}+C\delta^{-2}h^2\int_{\Omega_\gamma^-} |v(\mathbf{x})|d\mathbf{x}.\nonumber\\&
        \leq C(h^2+\delta^{-1}h^2)\|v\|_{L^2(\Omega)}+C\delta^{-2}h^2\sqrt{S(\Omega_\gamma^-)}\|v\|_{L^2(\Omega_\gamma^-)}.\nonumber\\&
        \leq C(h^2+\delta^{-1.5}h^2)\|v\|_{L^2(\Omega)}.
    \end{align}
    From the nonlocal Poincar\'e inequality in Lemma \ref{ballpoincare}, taking $v=u_{\delta}^--u_{au,\delta}^-$, it arrives at 
    \begin{align}\label{auxresult}
        \|u_{\delta}^--u_{au,\delta}^-\|^2_{L^2(\Omega)}&\leq |u_{\delta}^--u_{au,\delta}^-|_{\delta,\Omega}
        \leq C(h^2+\delta^{-1.5}h^2)\|u_{\delta}^--u_{au,\delta}^-\|_{L^2(\Omega)}.
    \end{align}
    Because of that $V_h^\delta(\Omega)\subset S^\prime_\delta(\Omega)$, taking $v\in V_h^\delta(\Omega)$ in (\ref{nonlocalcpinnervar}) and (\ref{auxiliaryinnerfem}), and then subtracting (\ref{auxiliaryinnerfem}) from (\ref{nonlocalcpinnervar}), it has the following equation:
    \begin{equation}\label{eqn561neweq0}
        \frac{1}{2}\int_\Omega\int_\Omega (v(\mathbf{x})-v(\mathbf{y}))((u_{au,\delta}^--u_{h,\delta}^-)(\mathbf{x})-(u_{au,\delta}^--u_{h,\delta}^-)(\mathbf{y}))\gamma_\delta(\mathbf{x},\mathbf{y})d\mathbf{y}d\mathbf{x}=0, ~\forall v\in V_h^\delta(\Omega).
    \end{equation}
    Then, totally the same as the proof of (3.14) in \cite{du2024errorestimatesfiniteelement}, taking $v=u_{h,\delta}^--I_hu_0$ in (\ref{eqn561neweq0}), where $I_hu_0$ is the Lagrangian type interpolation of $u_0$ on $\mathcal{T}_\delta^h(\Omega)$, from the Cauchy-Schwartz inequality,
    we can get that:
    \begin{align}\label{minimizefemN-}
        |u_{au,\delta}^--u_{h,\delta}^-|_{\delta,\Omega}\leq |u_{au,\delta}^--I_hu_0|_{\delta,\Omega}.
    \end{align}
    With the right-hand side of the nonlocal Poincar\'e inequality in Lemma \ref{ballpoincare}, (\ref{modelresultn-}) and (\ref{auxresult}), we have
    \begin{align}\label{errorseperate}
        \|u_{au,\delta}^-&-u_{h,\delta}^-\|^2_{L^2(\Omega)}\leq (|u_{au,\delta}^--u_{h,\delta}^-|_{\delta,\Omega})^2\leq (|u_{au,\delta}^--I_hu_0|_{\delta,\Omega})^2\nonumber\\&
        \leq (|u_{au,\delta}^--u_\delta^-|_{\delta,\Omega})^2 +(|u_0-u_\delta^-|_{\delta,\Omega})^2 +(|u_0-I_hu_0|_{\delta,\Omega})^2\nonumber\\&
        \leq C(h^2+\delta^{-1.5}h^2)\|u_{\delta}^--u_{au,\delta}^-\|_{L^2(\Omega)}+C\delta^2\|u_\delta^--u_0\|_{L^2(\Omega)}+C\delta^{-2}\|u_0-I_hu_0\|_{L^2(\Omega)}^2\nonumber\\&\leq C((h^2+\delta^{-1.5}h^2)^2+\delta^4+\delta^{-2}h^4)\leq C(\delta^4+\delta^{-3}h^4).
    \end{align}
This implies
    \begin{align}
        \|u_{\delta}^--u_{h,\delta}^-\|_{L^2(\Omega)}&\leq \|u_\delta^--u_{au,\delta}^-\|_{L^2(\Omega)}+\|u_{au,\delta}^--u_{h,\delta}^-\|_{L^2(\Omega)}
        \leq C\delta^2+C\delta^{-1.5}h^2.
    \end{align}
    The proof is completed.
\end{proof}

Together with Theorem $\ref{thm41}$ , we can directly get the following corollary.
\begin{corollary}[Asymptotic compatibility for the inner Neumann problem without the approximate ball] \label{corollary41}
    \JW{Assuming that $u_0$ is the solution of the local problem and $u_{h,\delta}^-$ is the finite element solution of inner Neumann problem in (\ref{nonlocalcpinnerfem})}, if the conditions of Lemma \ref{Luerrorsym} hold, it has:
     \begin{equation}
        \|u_0-u_{h,\delta}^-\|_{L^2(\Omega)}= \mathcal{O}(\delta^2)+\mathcal{O}(\delta^{-1.5}h^2).
    \end{equation}
\end{corollary}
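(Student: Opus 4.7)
The plan is to derive the corollary directly by splitting the target error through an intermediate continuous nonlocal solution, so that previously established model-error and finite-element error bounds can be combined.

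First, I would introduce the continuous nonlocal inner Neumann solution $u_\delta^-$ of \eqref{nonlocalcpinner} as an intermediate state, and apply the triangle inequality in $L^2(\Omega)$:
\begin{equation*}
\|u_0-u_{h,\delta}^-\|_{L^2(\Omega)}
\;\le\; \|u_0-u_\delta^-\|_{L^2(\Omega)}
\;+\; \|u_\delta^--u_{h,\delta}^-\|_{L^2(\Omega)}.
\end{equation*}
This reduction is valid because both $u_0$ and $u_{h,\delta}^-$ are normalized to have zero mean on $\Omega$ (built into the constrained energy space and the FEM formulation), so $u_\delta^-$ lives in the same affine subspace and the decomposition is natural.

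Next, I would invoke the two ingredients already established upstream. For the first term, Theorem \ref{thm41} gives the model-error bound $\|u_0-u_\delta^-\|_{L^2(\Omega)}=\mathcal{O}(\delta^2)$ under the regularity assumption $u_0\in C^4(\Omega)$, which is part of the standing hypotheses needed for Lemma \ref{Luerrorsym}. For the second term, Theorem \ref{thmfemerror} controls the discrete error as $\|u_\delta^--u_{h,\delta}^-\|_{L^2(\Omega)}=\mathcal{O}(\delta^2)+\mathcal{O}(\delta^{-1.5}h^2)$, again under the hypotheses compatible with those of Lemma \ref{Luerrorsym}. Summing these two contributions and absorbing the $\mathcal{O}(\delta^2)$ terms into a single constant yields the claimed estimate.

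Since the corollary is purely a consequence of combining two previously proved results through a triangle inequality, there is no genuine obstacle in the argument. The only point that deserves a brief check is a consistency verification: that the hypotheses of Lemma \ref{Luerrorsym} (weak quasi-uniformity of inscribed polygons, kernel bounds \eqref{lowbound}--\eqref{supbound}, $n_\delta\to\infty$ as $\delta\to 0$, and $u_0\in C^4(\Omega^+)$ through the $C^4$ extension $\widetilde{u_0}$) indeed subsume all conditions required to invoke both Theorem \ref{thm41} and Theorem \ref{thmfemerror}. Once this is acknowledged, the proof reduces to two lines of bookkeeping.
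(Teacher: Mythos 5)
Your proposal is correct and is precisely the argument the paper uses: the text immediately preceding the corollary states it follows from combining Theorem~\ref{thm41} with Theorem~\ref{thmfemerror}, which is exactly your triangle-inequality decomposition through the intermediate continuous nonlocal solution $u_\delta^-$. No gaps; your consistency check of hypotheses is a reasonable (if brief) sanity step that the paper also implicitly assumes.
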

We point out that the AC error in Corollary \ref{corollary41} is aligned with the result in \cite{du2024errorestimatesfiniteelement}, which is underestimated. To obtain the sharp error estimate,  a nonlocal Aubin-Nitsche technique is further developed.


Next, we take the approximate ball error into consideration. The test space is defined as follows:
\begin{equation}
    S_{\delta,sym}^\prime(\Omega)=\{u\in L^2(\Omega): |u|_{\delta,\Omega}^{sym}<\infty\}.\nonumber
\end{equation}
The variational form of problem (\ref{problemsymN-}) is as follows:
\begin{equation}\label{nonlocalcpinnerballvar}
    \begin{aligned}
        &\int_\Omega v(\mathbf{x})\int_\Omega (u_{\delta,sym}^-(\mathbf{x})-u_{\delta,sym}^-(\mathbf{y}))\gamma_{\delta,sym}(\mathbf{x},\mathbf{y})d\mathbf{y}d\mathbf{x}\\&\quad=\int_\Omega v(\mathbf{x})f_{in,sym}(\mathbf{x})d\mathbf{x}-\int_{\Omega_\gamma^-} v(\mathbf{x})\int_{\Omega^+_\gamma} (\widetilde{u_0}(\mathbf{x})-\widetilde{u_0}(\mathbf{y}))\gamma_{\delta,sym}(\mathbf{x},\mathbf{y})d\mathbf{y}d\mathbf{x}, 
    \end{aligned}
\end{equation}
where the mean value of $u_{\delta,sym}^-$ on $\Omega$ is zero and $v\in S_{\delta,sym}^\prime(\Omega)$.
The finite element scheme (\ref{nonlocalcpinnerfem}) changes to the following form: Find $u^-_{h,b}\in V_h^\delta(\Omega)$ such that $\forall v_h\in V_h^\delta(\Omega)$, 
\begin{equation}\label{nonlocalcpinnerfemball}
\frac{1}{2}A_{\Omega,\Omega}^{sym}(u_{h,b}^-,v_h)=\int_{\Omega}v_h(\mathbf{x})(f_h(\mathbf{x})+df_{h,sym}^{in})d\mathbf{x}-\int_{\Omega_\gamma^-}v_h(\mathbf{x})\N^-_{\Omega,sym}u_{h,b}^-d\mathbf{x},
\end{equation}
where the mean value of $u_{h,b}^-$ on $\Omega$ is zeros and $df_{h,sym}^{in}$ is defined as follows:
\begin{equation}
    df_{h,sym}^{in}=\frac{1}{S(\Omega)}\Big(\int_{\partial\Omega} gd\mathbf{x}-\int_{\Omega_\gamma^-} \N_{\Omega,sym}^{-} \widetilde{u_{0,h}}d\mathbf{x}\Big).
\end{equation}
We now consider the convergence analysis of finite element method with the approximate ball. 
\begin{theorem}[Error estimate of FEM for inner Neumann problem with the approximate ball]\label{finalresultwithapproximateball}
    Assuming that the conditions of Lemma \ref{Luerrorsym} hold, as for the solution of (\ref{nonlocalcpinnerballvar}) and (\ref{nonlocalcpinnerfemball}), it has that:
    \begin{equation}
        \|u_{\delta,sym}^--u_{h,b}^-\|_{L^2(\Omega)}= \mathcal{O}(\delta^{-1}n_\delta^{-\lambda})+\mathcal{O}(\delta^2)+\mathcal{O}(\delta^{-1.5}h^2),
    \end{equation}
    where $u_{\delta,sym}^-$ is the solution of the modified  nonlocal inner Neumann problem (\ref{problemsymN-}) and $u_{h,b}^-$ is the finite element solution in (\ref{nonlocalcpinnerfemball}).
\end{theorem}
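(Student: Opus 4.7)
The plan is to parallel the three-step argument used in Theorem~\ref{thmfemerror}: introduce an auxiliary continuous variational problem whose right-hand side already uses the discretized data, control the distance from $u_{\delta,sym}^-$ to this auxiliary solution via the modified Poincar\'e inequality of Lemma~\ref{poincaresysp}, and then exploit Galerkin orthogonality in the symmetric form $A_{\Omega,\Omega}^{sym}$ to convert the remaining discrete error into a best-approximation problem bounded through Theorem~\ref{thm56approxball}. Specifically, I would first introduce $u_{au,b}^- \in S_{\delta,sym}^\prime(\Omega)$ with zero mean on $\Omega$ by
\begin{equation*}
\frac{1}{2}A_{\Omega,\Omega}^{sym}(u_{au,b}^-,v) \;=\; \int_\Omega v\bigl(f_h+df_{h,sym}^{in}\bigr)\,d\mathbf{x} \;-\; \int_{\Omega_\gamma^-} v\,\N_{\Omega,sym}^-\widetilde{u_{0,h}}\,d\mathbf{x},\qquad \forall v\in S_{\delta,sym}^\prime(\Omega).
\end{equation*}

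Next, subtracting the auxiliary equation from \eqref{nonlocalcpinnerballvar} produces three data discrepancies: $(f-f_h)$, $(df_{sym}^{in}-df_{h,sym}^{in})$, and $\N_{\Omega,sym}^-(\widetilde{u_0}-\widetilde{u_{0,h}})$. Using the linear-interpolation bound $\|\widetilde{u_0}-\widetilde{u_{0,h}}\|_{\infty}\le Ch^2\|\widetilde{u_0}\|_{W^{2,\infty}}$ together with the neighborhood-mass estimate \eqref{supbound}, each of the three can be controlled exactly in the manner of \eqref{eqn555newmid}: the volumetric pieces contribute $\mathcal{O}(h^2)$ against $\|v\|_{L^2(\Omega)}$, while the boundary piece is $\mathcal{O}(\delta^{-2}h^2)$ against $\|v\|_{L^2(\Omega_\gamma^-)}$ and hence at most $\mathcal{O}(\delta^{-1.5}h^2)\|v\|_{L^2(\Omega)}$ after absorbing the factor $|\Omega_\gamma^-|^{1/2}=\mathcal{O}(\delta^{1/2})$. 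Taking $v=u_{\delta,sym}^- - u_{au,b}^-$ and applying Lemma~\ref{poincaresysp} then yields the quadrature bound $\|u_{\delta,sym}^- - u_{au,b}^-\|_{L^2(\Omega)} \le C(h^2+\delta^{-1.5}h^2)$.

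I would then restrict $v$ to $V_h^\delta(\Omega)$ in the auxiliary problem and in \eqref{nonlocalcpinnerfemball} and subtract, producing Galerkin orthogonality for $A_{\Omega,\Omega}^{sym}(u_{au,b}^- - u_{h,b}^-,\cdot)$. Choosing $v_h = u_{h,b}^- - I_h u_0$ mirrors the derivation of \eqref{minimizefemN-} and gives $|u_{au,b}^- - u_{h,b}^-|_{\delta,\Omega}^{sym} \le |u_{au,b}^- - I_h u_0|_{\delta,\Omega}^{sym}$. Splitting by triangle inequality as in \eqref{errorseperate} introduces three contributions: the already-controlled $|u_{au,b}^- - u_{\delta,sym}^-|_{\delta,\Omega}^{sym}$ of size $\mathcal{O}(h^2+\delta^{-1.5}h^2)$, the model-level seminorm $|u_{\delta,sym}^- - \widetilde{u_0}|_{\delta,\Omega}^{sym}=\mathcal{O}(\delta^{-1}n_\delta^{-\lambda})+\mathcal{O}(\delta^2)$ extracted from the intermediate inequality \eqref{symerrorL2} combined with Theorem~\ref{thm56approxball}, and the interpolation seminorm bounded by the upper Poincar\'e estimate $|\widetilde{u_0} - I_h u_0|_{\delta,\Omega}^{sym} \le C\delta^{-1}\|u_0 - I_h u_0\|_{L^2(\Omega)} \le C\delta^{-1}h^2$. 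Squaring, summing, and applying the lower bound of Lemma~\ref{poincaresysp} delivers $\|u_{au,b}^- - u_{h,b}^-\|_{L^2(\Omega)} = \mathcal{O}(\delta^{-1}n_\delta^{-\lambda})+\mathcal{O}(\delta^2)+\mathcal{O}(\delta^{-1.5}h^2)$, and a final triangle inequality combined with the Step~1 bound closes the theorem.

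The main obstacle will be cleanly tracking the asymmetry-induced corrections. Although the kernel has been symmetrized by construction, the modified compatibility correction $df_{h,sym}^{in}$ no longer cancels automatically, so I must verify through Lemma~\ref{dfapproximation} together with the interpolation bound and \eqref{supbound} that $|df_{sym}^{in}-df_{h,sym}^{in}|$ only contributes at the $\mathcal{O}(\delta^{-1}h^2)$ level, and is therefore absorbed into the $\mathcal{O}(\delta^{-1.5}h^2)$ quadrature rate rather than corrupting it. A secondary subtlety is confirming that Lemma~\ref{poincaresysp} indeed applies on the zero-mean space $S_{\delta,sym}(\Omega)$ uniformly in $\delta$; this is where the weakly quasi-uniform polygon hypothesis and $n_\delta\to\infty$ as $\delta\to 0$ enter, and I would state these at the top of the proof before invoking the inequality.
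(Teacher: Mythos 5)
Your proposal follows the paper's argument essentially step for step: the same auxiliary problem (the paper calls it $u_{ab,\delta}^-$), the same bound $\|u_{\delta,sym}^- - u_{ab,\delta}^-\|_{L^2(\Omega)} \le C(h^2 + \delta^{-1.5}h^2)$ via the modified Poincar\'e inequality, the same Galerkin best-approximation step, and the same three-term triangle decomposition of $|u_{ab,\delta}^- - I_h u_0|_{\delta,\Omega}^{sym}$ combining the quadrature error, the model error from Theorem~\ref{thm56approxball}, and the interpolation error via the upper Poincar\'e estimate. The details you flag as potential obstacles (the $\mathcal{O}(\delta^{-1}h^2)$ size of the $df$-discrepancy and the applicability of Lemma~\ref{poincaresysp} on $S_{\delta,sym}(\Omega)$) are handled identically in the paper, so your proof plan is correct and coincides with the paper's.
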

\begin{proof}We consider the error estimate of the inner problem first.
    For the convenience of analysis, we introduce an auxiliary problem that  $\forall v\in S^\prime_{\delta,sym}(\Omega)$, finding $u_{ab,\delta}^- \in S^\prime_{\delta,sym}(\Omega)$, such that:
    \begin{align}\label{auxiliaryinnerfemball}
        \frac{1}{2}&\int_\Omega\int_\Omega (v(\mathbf{x})-v(\mathbf{y}))(u_{ab,\delta}^-(\mathbf{x})-u_{ab,\delta}^-(\mathbf{y}))\gamma_{\delta,sym}(\mathbf{x},\mathbf{y})d\mathbf{y}d\mathbf{x}\nonumber\\&=\int_\Omega v(\mathbf{x})(f_h(\mathbf{x})+df_{h,sym}^{in})d\mathbf{x}-\int_{\Omega_\gamma^-} v(\mathbf{x})\int_{\Omega_\gamma^+}(\widetilde{u_{0,h}}(\mathbf{x})-\widetilde{u_{0,h}}(\mathbf{y}))\gamma_{\delta,sym}(\mathbf{x},\mathbf{y})d\mathbf{y}d\mathbf{x}.
    \end{align}
    where $u_{ab,\delta}^-$ also satisfies a mean value of 0 in $\Omega$.
    Taking $v=u_{\delta}^--u_{ab,\delta}^-\in S_{\delta,sym}(\Omega)$, because of the symmetry and the integration boundedness of the kernel $\gamma_{\delta,sym}$. Using the same argument as the proof in Theorem \ref{thmfemerror}, it follows from the Poincar\'e inequality in Lemma \ref{poincaresysp} that:
    \begin{align}\label{auxresultball}
        \|u_{\delta}^--u_{ab,\delta}^-\|^2_{L^2(\Omega)}&\leq |u_{\delta,sym}^--u_{ab,\delta}^-|_{\delta,\Omega}^{sym}
         \leq C(h^2+\delta^{-1.5}h^2)\|u_{\delta,sym}^--u_{ab,\delta}^-\|_{L^2(\Omega)}.
    \end{align}
    Similar with the proof of (\ref{minimizefemN-}) in \cite{du2024errorestimatesfiniteelement}, it has that
    \begin{align}
        |u_{ab,\delta}^--u_{h,\delta}^-|_{\delta,\Omega}^{sym}\leq |u_{ab,\delta}^--I_hu_0|_{\delta,\Omega}^{sym}.\nonumber
    \end{align}
    With the right-hand side of the nonlocal Poincar\'e inequality in Lemma \ref{poincaresysp}, (\ref{auxresultball}) and (\ref{symerrorL2}), we have
    \begin{align}\label{errorseperateball}
        \|u_{ab,\delta}^-&-u_{h,b}^-\|^2_{L^2(\Omega)}\leq (|u_{ab,\delta}^--u_{h,b}^-|_{\delta,\Omega}^{sym})^2\leq (|u_{ab,\delta}^--I_hu_0|_{\delta,\Omega}^{sym})^2\nonumber\\&
        \leq (|u_{ab,\delta}^--u_{\delta,sym}^-|_{\delta,\Omega}^{sym})^2 +(|u_0-u_{\delta,sym}^-|_{\delta,\Omega}^{sym} )^2+(|u_0-I_hu_0|_{\delta,\Omega}^{sym})^2\nonumber\\&
        \leq C(h^2+\delta^{-1.5}h^2)\|u_{\delta,sym}^--u_{ab,\delta}^-\|_{L^2(\Omega)}\nonumber\\&\quad+C(\delta^2+\delta^{-1}n_\delta^{-\lambda})\|u_{\delta,sym}^--u_0\|_{L^2(\Omega)}+C\delta^{-2}\|u_0-I_hu_0\|_{L^2(\Omega)}^2\nonumber\\&\leq C((h^2+\delta^{-1.5}h^2)^2+(\delta^2+\delta^{-1}n_\delta^{-\lambda})^2+\delta^{-2}h^4)\leq C(\delta^{-2}n_\delta^{-2\lambda}+\delta^4+\delta^{-3}h^4),
    \end{align}
     where $I_hu_0$ is the Lagrangian type interpolation of $u_0$ on $\mathcal{T}_\delta^h(\Omega)$. This implies
    \begin{align}
        \|u_{\delta,sym}^--u_{h,b}^-\|_{L^2(\Omega)}&\leq \|u_{\delta,sym}^--u_{ab,\delta}^-\|_{L^2(\Omega)}+\|u_{ab,\delta}^--u_{h,b}^-\|_{L^2(\Omega)}\nonumber\\&
        \leq C(\delta^{-1}n_\delta^{-\lambda}+\delta^2+\delta^{-1.5}h^2).
    \end{align}
    The proof is completed.
\end{proof}

From the above error estimation, together with Theorem $\ref{thm56approxball}$, we have the following corollary:
\begin{corollary}[Asymptotic compatibility for the inner Neumann problem with the approximate ball] \label{finalresultwithapproximateballcol}
Assuming that $u_0$ is the solution of the local problem and $u_{h,\delta}^-$ is the finite element solution of inner Neumann problem in (\ref{nonlocalcpinnerfem}), if the conditions of Lemma \ref{Luerrorsym} hold, it has:
    \begin{equation}
        \|u_0-u_{h,b}^-\|_{L^2(\Omega)}= \mathcal{O}(\delta^{-1}n_\delta^{-\lambda})+\mathcal{O}(\delta^2)+\mathcal{O}(\delta^{-1.5}h^2).
    \end{equation}
    When we choose that $n_\delta=\mathcal{O}(\delta/h)$, it has that:
    \begin{equation}
       \|u_0-u_{h,b}^-\|_{L^2(\Omega)}= \mathcal{O}(\delta^{-1-\lambda}h^\lambda)+\mathcal{O}(\delta^2)+\mathcal{O}(\delta^{-1.5}h^2).
    \end{equation}
\end{corollary}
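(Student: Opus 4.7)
The plan is to treat this as a direct consequence of the triangle inequality together with the two main preceding results: the model/ball error estimate in Theorem~\ref{thm56approxball} and the discretization estimate in Theorem~\ref{finalresultwithapproximateball}. First I would introduce the intermediate function $u_{\delta,sym}^-$, the solution of the symmetrized nonlocal inner Neumann problem \eqref{problemsymN-}, and split the total error as
\begin{equation*}
\|u_0 - u_{h,b}^-\|_{L^2(\Omega)} \leq \|u_0 - u_{\delta,sym}^-\|_{L^2(\Omega)} + \|u_{\delta,sym}^- - u_{h,b}^-\|_{L^2(\Omega)}.
\end{equation*}
This separates the error contribution coming purely from the continuum-level approximation of the local Poisson--Neumann problem by its symmetrized nonlocal analogue (the sum of true nonlocal model error and geometric-ball approximation error) from the error contribution coming from discretizing the symmetrized nonlocal problem with piecewise-linear finite elements.

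Next I would invoke Theorem~\ref{thm56approxball} to bound the first term on the right by $\mathcal{O}(\delta^{-1}n_\delta^{-\lambda}) + \mathcal{O}(\delta^2)$, noting that the hypotheses needed there (the kernel bounds \eqref{lowbound}--\eqref{supbound}, $u_0 \in C^4(\Omega)$, weakly quasi-uniform inscribed polygons, $n_\delta \to \infty$ as $\delta \to 0$, and the conditions of Lemma~\ref{poincaresysp}) are all subsumed under ``the conditions of Lemma~\ref{Luerrorsym}'' that the corollary assumes. I would then invoke Theorem~\ref{finalresultwithapproximateball} to bound the second term by $\mathcal{O}(\delta^{-1}n_\delta^{-\lambda}) + \mathcal{O}(\delta^2) + \mathcal{O}(\delta^{-1.5}h^2)$, which rests on exactly the same set of assumptions. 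Summing these two bounds and absorbing like terms yields the first claimed estimate
\begin{equation*}
\|u_0 - u_{h,b}^-\|_{L^2(\Omega)} = \mathcal{O}(\delta^{-1}n_\delta^{-\lambda}) + \mathcal{O}(\delta^2) + \mathcal{O}(\delta^{-1.5}h^2).
\end{equation*}

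For the second assertion, I would simply substitute the calibration $n_\delta = \mathcal{O}(\delta/h)$ into the geometric-error term, giving $\delta^{-1}n_\delta^{-\lambda} = \delta^{-1}(\delta/h)^{-\lambda} = \delta^{-1-\lambda}h^{\lambda}$, and rewrite the estimate accordingly. Since this corollary is a straightforward consequence of two already-established theorems, there is no real obstacle; the only bookkeeping worth mentioning is checking that the hypothesis sets of Theorem~\ref{thm56approxball} and Theorem~\ref{finalresultwithapproximateball} both follow from ``the conditions of Lemma~\ref{Luerrorsym}'' as stated, so no extra regularity or geometric assumption needs to be imposed in the corollary. I would end with the remark that the balancing choice $n_\delta = \mathcal{O}(\delta/h)$ is natural because it matches the resolution of the polygonal approximation to the underlying triangular mesh, and makes the geometric-error term directly comparable with the finite element interpolation error.
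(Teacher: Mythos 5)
Your proposal is correct and follows exactly the route the paper takes: introduce the intermediate solution $u_{\delta,\mathrm{sym}}^-$ of the symmetrized problem \eqref{problemsymN-}, apply the triangle inequality, and bound the two resulting terms by Theorem~\ref{thm56approxball} and Theorem~\ref{finalresultwithapproximateball} respectively, then substitute $n_\delta=\mathcal{O}(\delta/h)$. Nothing further is needed.
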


\subsection{Finite element analysis of the outer Neumann problem (\ref{problemsymN*})}
We now consider the asymptotic compatibility analysis for the outer Neumann problem (\ref{problemsymN*}). In this situation, the point over $\Omega_\gamma^+$ participates in the actual calculation. 

It is generally hard to triangulate $\Omega$ and $\Omega^+$ exactly simultaneously.  To overcome this difficulty, we introduce a quasi-uniform triangular mesh $\mathcal{T}_\delta^h(D)$ over an extended bounded convex domain $D$ which is exactly triangulated by the mesh and satisfies $\Omega^+ \subset D$. This mesh also exactly triangulates domain $\Omega $. 
In addition, we assume that the region $D$ satisfies 
$D \subset \widehat{\Omega}_{\gamma,5/4}^+
:= \Omega^+ \cup
\left\{ \boldsymbol{x} \in \R^2\;\big|\;
  \delta \le \operatorname{dist}(\boldsymbol{x},\partial\Omega)
  < \tfrac{5}{4}\delta
\right\}.$ Assuming that $\widetilde{u_0}$ is the $C^4$ extension of the local solution $u_0$ on $D$, the Neumann boundary operator that we used in the actual finite element calculations is that:
\begin{equation}
    \N^{*,r}_{D} u(\mathbf{x})=\int_{D\cap B_\delta(\mathbf{x})}(u(\mathbf{x})-u(\mathbf{y}))\gamma_\delta(\mathbf{x},\mathbf{y})d\mathbf{y} \quad (\mathbf{x}\in D/\Omega).
\end{equation}
Then, the outer Neumann problem defined on $D$ is as follows:
\begin{equation}\label{problemNbigD}
    \left\{
    \begin{aligned}
    &\L_{\delta} u= f_{out}^{p}, \quad in\quad \Omega,\\
    &\N^{*,r}_{D} u=\N^{*,r}_{D} \widetilde{u_0}, \quad in \quad D/\Omega,\quad\int_{\mathbf{x}\in\Omega} u d\mathbf{x}=0.
\end{aligned}
    \right.
\end{equation}
Here the right-hand side term $f_{out}^{p}$ are defined as follows:
\begin{equation}\label{dfDdef}
    f_{out}^{p}:=f+\frac{1}{S(\Omega)}\Big(\int_{\partial\Omega} gd\mathbf{x}-\int_{D/\Omega} \N_{D}^{*,r} \widetilde{u_0}d\mathbf{x}\Big):=f+df_{out}^{p}.
\end{equation}
\begin{theorem}[Asymptotic compatibility for the outer Neumann problem defined on $D$]\label{thm59regionD}
    Assuming that $\gamma_\delta$ satisfies (\ref{supbound}) and (\ref{lowbound}), $u_0\in C^4(\Omega)$, the local problem (\ref{local}) is second-order asymptotically compatible with the nonlocal problem (\ref{problemNbigD}), in other words
    \begin{equation}
        \|u_0-u_{\delta}^{*,D}\|_{L^2(\Omega)}=\mathcal{O}(\delta^2),
    \end{equation}
    where $u_{\delta}^{*,D}$ is the solution of (\ref{problemNbigD}).
\end{theorem}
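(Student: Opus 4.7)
The plan is to mimic the argument of Theorem~\ref{thm42}, with the enlarged polygonal domain $D$ playing the role previously played by $\Omega^+$ and with the $D$-restricted exterior Neumann operator $\N^{*,r}_D$ replacing $\N^*_\Omega$. The cornerstone is the Green-type identity
\begin{equation}\label{greenD}
(\L_\delta u,v)_\Omega = A_{D,D}(u,v) - (\N^{*,r}_D u,v)_{D\setminus\Omega},
\end{equation}
valid for $u,v$ supported in $D$. This follows by symmetrizing $A_{D,D}(u,v)=\int_D v(\mathbf{x})\int_D(u(\mathbf{x})-u(\mathbf{y}))\gamma_\delta(\mathbf{x},\mathbf{y})\,d\mathbf{y}\,d\mathbf{x}$, splitting the outer integral over $\Omega$ and $D\setminus\Omega$, and observing that $B_\delta(\mathbf{x})\subset\Omega^+\subset D$ for $\mathbf{x}\in\Omega$ makes the inner integral equal to $\L_\delta u(\mathbf{x})$, whereas for $\mathbf{x}\in D\setminus\Omega$ it equals $\N^{*,r}_D u(\mathbf{x})$ by definition.

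With \eqref{greenD} in hand, I would first check that the compatibility correction $df_{out}^p$ defined in (\ref{dfDdef}) is $\mathcal{O}(\delta^2)$. Taking $u=\widetilde{u_0}$ and $v\equiv 1$ in \eqref{greenD} gives $(\L_\delta\widetilde{u_0},1)_\Omega = -(\N^{*,r}_D\widetilde{u_0},1)_{D\setminus\Omega}$. Combining with the classical Taylor-expansion estimate (\ref{classicalC^4result}) and the local divergence identity $(f,1)_\Omega = -(g,1)_{\partial\Omega}$ (since $f=-\Delta u_0$ and $\partial_{\mathbf{n}} u_0=g$), this yields $(\N^{*,r}_D\widetilde{u_0},1)_{D\setminus\Omega} = (g,1)_{\partial\Omega}+\mathcal{O}(\delta^2)$, so that $df_{out}^p=\mathcal{O}(\delta^2)$ exactly as in (\ref{df-result}).

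Next I would derive the residual equation by subtracting the weak form of (\ref{problemNbigD}) from \eqref{greenD} applied at $u=\widetilde{u_0}$. Since the Neumann data on $D\setminus\Omega$ cancel identically, the error $w:=u_\delta^{*,D}-\widetilde{u_0}$ satisfies, for every admissible test $v$,
\begin{equation*}
A_{D,D}(w,v) = (f+df_{out}^p-\L_\delta\widetilde{u_0},v)_\Omega = (\mathcal{O}(\delta^2),v)_\Omega,
\end{equation*}
where the second equality uses (\ref{classicalC^4result}) together with the previous step. Because both $u_\delta^{*,D}$ and $u_0$ have vanishing mean on $\Omega$, $w|_\Omega\in S_\delta(\Omega)$; choosing $v=w$ and combining Cauchy--Schwarz with the trivial inequality $A_{\Omega,\Omega}(w,w)\le A_{D,D}(w,w)$ and the nonlocal Poincar\'e inequality of Lemma~\ref{ballpoincare} gives
\begin{equation*}
\|w\|_{L^2(\Omega)}^2 \;\le\; C\,A_{\Omega,\Omega}(w,w) \;\le\; C\,A_{D,D}(w,w) \;\le\; C\,\delta^2\,\|w\|_{L^2(\Omega)},
\end{equation*}
from which the asserted $\mathcal{O}(\delta^2)$ bound follows immediately.

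The only real obstacle is conceptual rather than computational: one must check that truncating the exterior integration region from $\mathbb{R}^2$ down to $D$ does not destroy either the Green's identity structure or the compatibility of the pure Neumann problem. The hypothesis $D\supset\Omega^+$ is precisely what guarantees $B_\delta(\mathbf{x})\subset D$ for every $\mathbf{x}\in\Omega$, which is what makes \eqref{greenD} a faithful analogue of the global identity (\ref{intbypartN++}) posed on the full set $\Omega^+$. Once this point is in place, the rest of the argument is a direct transcription of the proof of Theorem~\ref{thm42}.
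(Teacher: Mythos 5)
Your proof is correct and follows essentially the same route as the paper: establish $df_{out}^p=\mathcal{O}(\delta^2)$ via the Green-type identity on $D$ and the consistency bound \eqref{classicalC^4result}, derive the residual equation $A_{D,D}(w,v)=(\mathcal{O}(\delta^2),v)_\Omega$ for $w=u_\delta^{*,D}-\widetilde{u_0}$, and close with $A_{\Omega,\Omega}(w,w)\le A_{D,D}(w,w)$ plus the nonlocal Poincar\'e inequality. Your write-up is somewhat more explicit than the paper's (which compresses these steps into Eq.\ \eqref{DerrorL2} with a pointer to the argument of Theorem~\ref{thm42}); one small remark is that you invoke Lemma~\ref{ballpoincare} for the Poincar\'e step, which is indeed the appropriate lemma here since the kernel is the unsymmetrized $\gamma_\delta$, whereas the paper cites Lemma~\ref{poincaresysp} (the symmetrized version) — that appears to be a slip on the paper's side rather than yours.
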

\begin{proof}
    Similar to the proof of Theorem \ref{thm42}, we first present the approximation of the modified  term $df_{out}^p$. It follows from the first equation in (\ref{local}) and (\ref{problemNbigD}) and the classical result (\ref{classicalC^4result}) that:
    \begin{align}\label{N^*considerbiggerregion}
        \L_\delta u_{\delta}^{*,D}=-\Delta u_0+df_{out}^p=\L_\delta \widetilde{u_0}+df_{out}^p+\mathcal{O}(\delta^2).
    \end{align}
By the definition of the nonlocal operator, it has:
    \begin{equation}
        (\L_\delta \widetilde{u_0},1)_{\Omega}=A_{D,D}(\widetilde{u_0},1)-(\N^{*,r}_D\widetilde{u_0},1)_{D/\Omega}= -(\N^{*,r}_D\widetilde{u_0},1)_{D/\Omega}.\nonumber
    \end{equation}
    Then we can get the following equation:
    \begin{align}
        S(\Omega)df_{out}^p=\int_{\partial\Omega} gd\mathbf{x}-\int_{D/\Omega} \N_{D}^{*,r} \widetilde{u_0}d\mathbf{x}=(\Delta u_0+\L_\delta \widetilde{u_0},1)_{\Omega}=\mathcal{O}(\delta^2).\nonumber
    \end{align}
    By taking the inner product of $u_{\delta}^{*,D}-u_0$ on both sides of (\ref{N^*considerbiggerregion}), using  the same as the proof of Theorem \ref{thm42}, and by the Poincar\'e inequality in Lemma \ref{poincaresysp}, it has 
    \begin{align}\label{DerrorL2}
         \|u_{\delta}^{*,D}-u_0\|_{L^2(\Omega)}^2\leq (|u_{\delta}^{*,D}-\widetilde{u_0}|^D_{\delta})^2=(\L_{\delta} (u_{\delta}^{*,D}-\widetilde{u_0}),u_{\delta}^{*,D}-u_0)_\Omega
        \leq C\delta^2\|u_{\delta}^{*,D}-u_0\|_{L^2(\Omega)}.
    \end{align}
    This completes the proof.
\end{proof}

The theorem above reveals that when we use a larger domain $D$ instead of $\Omega^+$ in the finite element scheme, the model error is still of the order $\delta^2$ and no additional geometric error is introduced. In the actual calculations and the analysis that follows, we will all consider the case where the problem is defined over the large region $D$. The variational form of problem (\ref{problemNbigD}) is as follows:
\begin{equation}\label{nonlocalcpvar}
    \begin{aligned}
        &\int_D v(\mathbf{x})\int_D (u_\delta^{*,D}(\mathbf{x})-u_\delta^{*,D}(\mathbf{y}))\gamma_\delta(\mathbf{x},\mathbf{y})d\mathbf{y}d\mathbf{x}\\&\quad=\int_\Omega v(\mathbf{x})f_{out}^p(\mathbf{x})d\mathbf{x}+\int_{D/\Omega} v(\mathbf{x})\int_{D} (\widetilde{u_0}(\mathbf{x})-\widetilde{u_0}(\mathbf{y}))\gamma_\delta(\mathbf{x},\mathbf{y})d\mathbf{y}d\mathbf{x}, 
    \end{aligned}
\end{equation}
where the mean value of $u_{\delta}^{*,D}$ on $\Omega$ is zero and $v\in S_{\delta}^\prime(\Omega)$.
Let $\{\phi_i\}_{i=1}^{n_t^D}$ denote the set of continuous linear basis functions defined on $\mathcal{T}_\delta^h(D)$, $n_t^D$ is the number of mesh nodes. The finite element space $V_h^\delta$ is then the linear span of $\{\phi_i\}_{i=1}^{n_t^D}$. 
Then, the finite element scheme for (\ref{nonlocalcpvar}) is to find $u_{h,\delta}^{*,D}\in V_{h}^\delta(D)$ such that:
\begin{align}\label{nonlocalcpouterfem}
    \frac{1}{2}&\int_{D}\int_{D} (v_h(\mathbf{x})-v_h(\mathbf{y}))(u_{h,\delta}^{*,D}(\mathbf{x})-u_{h,\delta}^{*,D}(\mathbf{y}))\gamma_\delta(\mathbf{x},\mathbf{y})d\mathbf{y}d\mathbf{x}\nonumber\\&=\int_\Omega v_h(\mathbf{x})(f_h(\mathbf{x})+df_{h}^{out,p})d\mathbf{x}+\int_{D/\Omega} v_h(\mathbf{x})\int_{D}(\widetilde{u_{0,h}}(\mathbf{x})-\widetilde{u_{0,h}}(\mathbf{y}))\gamma_\delta(\mathbf{x},\mathbf{y})d\mathbf{y}d\mathbf{x},
\end{align}
 and $df_{h}^{out,p}$
\begin{equation}
    df_{h}^{out,p}=\frac{1}{S(\Omega)}\Big(\int_{\partial\Omega} gd\mathbf{x}-\int_{D} \N_{\Omega}^{*,r} \widetilde{u_{0,h}}d\mathbf{x}\Big).
\end{equation}

Next, we will give out the error estimation between the finite element solution and local solution. Firstly, we will introduce a lemma that bridges the solution between the outer boundary layer and the inner region.
\begin{lemma}\cite{Tadele2010,Andreu2009}\label{middlelemma}
    Suppose that $\Omega$ is a convex region, $\gamma_\delta $ satisfies (\ref{supbound}) and (\ref{lowbound}), $u\in L^2(D)$, and $|u|_{\delta,\Omega^+}<\infty$. Then, there exists a constant $C=C(\Omega)$, such that the following inequality holds:
    \begin{equation}
        \|u\|^2_{L^2(D/\Omega)}\leq C(\delta^2A_{D/\Omega,D}(u,u)+\|u\|^2_{L^2(\Omega_{\gamma,1/4}^-)}),
    \end{equation}
    where $\Omega^-_{\gamma,1/4}:=\{\mathbf{x}\in\Omega:dist(\mathbf{x},\partial\Omega)<\frac{\delta}{4}\}$.
\end{lemma}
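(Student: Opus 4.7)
My plan is to localize the estimate by pairing each point of the outer shell $D/\Omega$ with a neighborhood of reachable points lying either in the inner shell $\Omega^-_{\gamma,1/4}$ or closer to $\partial\Omega$ within $D/\Omega$ itself, and then to exploit the kernel lower bound (\ref{lowbound}) to convert pointwise differences $u(\mathbf{x})-u(\mathbf{y})$ into controlled contributions of the energy $A_{D/\Omega,D}(u,u)$. The key structural input is that $D\subset\widehat{\Omega}^+_{\gamma,5/4}$ gives a uniform thickness of the outer shell, and convexity of $\Omega$ ensures that inward-normal translations stay inside $\Omega$.

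First I would split $D/\Omega$ into a close layer $L_c=\{\mathbf{x}\in D/\Omega:\operatorname{dist}(\mathbf{x},\partial\Omega)<\delta/2\}$ and a far layer $L_f=\{\mathbf{x}\in D/\Omega:\delta/2\le\operatorname{dist}(\mathbf{x},\partial\Omega)<5\delta/4\}$. For $\mathbf{x}\in L_c$, convexity of $\Omega$ together with the $\delta/4$ thickness of $\Omega^-_{\gamma,1/4}$ allows me to select, along the inward normal at the boundary projection $\pi(\mathbf{x})$, a cone-shaped set $K(\mathbf{x})\subset\Omega^-_{\gamma,1/4}$ with $|K(\mathbf{x})|\ge c\delta^2$ such that every $\mathbf{y}\in K(\mathbf{x})$ satisfies $|\mathbf{x}-\mathbf{y}|\le\delta/2$. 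Combining the pointwise identity $u(\mathbf{x})^2\le 2(u(\mathbf{x})-u(\mathbf{y}))^2+2u(\mathbf{y})^2$ with the bound $\widetilde{\gamma}_\delta(|\mathbf{x}-\mathbf{y}|)\ge c_0\delta^{-4}$ on $K(\mathbf{x})$, integrating $\mathbf{y}$ over $K(\mathbf{x})$ (which yields a weight $\ge c\delta^2\cdot c_0\delta^{-4}=c'\delta^{-2}$) and then $\mathbf{x}$ over $L_c$, one obtains, after Fubini,
\[
\|u\|^2_{L^2(L_c)}\le C\bigl(\delta^2A_{L_c,\Omega^-_{\gamma,1/4}}(u,u)+\|u\|^2_{L^2(\Omega^-_{\gamma,1/4})}\bigr).
\]

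Next I would extend the inequality from $L_c$ to $L_f$ by a short chaining argument. Every $\mathbf{x}\in L_f$ lies within $\delta/2$ of a point that is at least $\delta/2$ closer to $\partial\Omega$, still inside $D$. Iterating the same pairing construction a bounded (geometry-dependent) number of times pushes the $L^2$ mass on $L_f$ down to $L_c$, each step picking up a fixed multiple of $\delta^2 A_{D/\Omega,D}(u,u)$. Since the number of iterations depends only on the ratio $5\delta/4\div\delta/2$ and on the geometry of $\Omega$, it is independent of $\delta$, and the accumulated constant is absorbed into $C=C(\Omega)$. Combining this with the $L_c$ estimate and using $A_{L_c,\Omega^-_{\gamma,1/4}}(u,u)\le A_{D/\Omega,D}(u,u)$ yields the claimed inequality.

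The step I expect to be the main obstacle is the chaining for $L_f$: one must verify that at every stage both endpoints of the pairing lie inside $D$ (so that the absorbed energy is bounded by $A_{D/\Omega,D}$ rather than by a bilinear form over a larger set we have no access to), and that the auxiliary sets $K(\mathbf{x})$ cover $\Omega^-_{\gamma,1/4}$ with bounded multiplicity, otherwise the Fubini step would lose a spurious $\delta^{-1}$ factor. Both issues are geometric bookkeeping items rather than analytic difficulties, and convexity of $\Omega$ together with the uniform thickness assumption $D\subset\widehat{\Omega}^+_{\gamma,5/4}$ makes them manageable, so the argument closes to the inequality as stated.
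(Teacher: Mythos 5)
Your overall strategy --- average $u(\mathbf{x})$ against a reachable inner set, invoke the kernel lower bound to convert the difference term into energy, Fubini the second term, and chain through nested shells --- is the same as the paper's. The paper partitions $D/\Omega$ into the five annular layers $\hat\Omega^+_{\gamma,i/4}$, $i=1,\dots,5$, of thickness $\delta/4$, averages over $\Omega^-_{\gamma,1/4}\cap B_{\delta/2}(\mathbf{x})$, and iterates inward; the constants are controlled by the interior cone condition ($C_1\delta^2\le |\Omega^-_{\gamma,1/4}\cap B_{\delta/2}(\mathbf{x})|\le C_2\delta^2$) exactly as you intend.

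There is, however, a concrete geometric gap in your first step. You take $L_c=\{\mathbf{x}\in D/\Omega:\operatorname{dist}(\mathbf{x},\partial\Omega)<\delta/2\}$ and claim a cone $K(\mathbf{x})\subset\Omega^-_{\gamma,1/4}$ with $|K(\mathbf{x})|\ge c\delta^2$ and $|\mathbf{x}-\mathbf{y}|\le\delta/2$ for every $\mathbf{y}\in K(\mathbf{x})$. Both conditions cannot hold simultaneously for $\mathbf{x}$ near the outer edge of $L_c$: if $\operatorname{dist}(\mathbf{x},\partial\Omega)=\delta/2-\epsilon$, then every $\mathbf{y}\in\Omega$ has $|\mathbf{x}-\mathbf{y}|\ge\delta/2-\epsilon$, so $\Omega\cap B_{\delta/2}(\mathbf{x})$ is a sliver of radial thickness at most $\epsilon$, and $|\Omega^-_{\gamma,1/4}\cap B_{\delta/2}(\mathbf{x})|=O(\epsilon\delta)\to 0$ rather than $\ge c\delta^2$. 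You cannot enlarge the ball to $B_\delta(\mathbf{x})$ to compensate, because (\ref{lowbound}) only furnishes $\widetilde\gamma_\delta\ge c_0\delta^{-4}$ on $[0,\delta/2]$. The repair is to thin the first layer to width $\delta/4$ (so that the inner layer $\Omega^-_{\gamma,1/4}$, also of width $\delta/4$, is fully reachable inside $B_{\delta/2}(\mathbf{x})$ and the cone measure is genuinely $\ge c\delta^2$ uniformly), and then run more chaining steps --- five of them, since $D\subset\widehat\Omega^+_{\gamma,5/4}$ --- which is precisely the paper's bookkeeping. Once the layer width is corrected, the rest of your argument (bounded-multiplicity Fubini for the $u(\mathbf{y})^2$ term via (\ref{supbound}), and keeping both endpoints in $D$ so the energy lands in $A_{D/\Omega,D}$) goes through as you anticipate.
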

\begin{proof}
    Here we note that $\Omega^+_{\gamma,i/4}:=\{\mathbf{x}\in\Omega_\gamma^+:dist(\mathbf{x},\partial\Omega)<i\frac{\delta}{4}\}$. When $i=1$, it has that:
   \begin{align}\label{eqn543new}
       \int_{\Omega^+_{\gamma,1/4}}u(\mathbf{x})^2d\mathbf{x}&=\int_{\Omega^+_{\gamma,1/4}}(\int_{\Omega^-_{\gamma,1/4}\cap B_{\delta/2}(\mathbf{x})}u(\mathbf{x})-u(\mathbf{y})+u(\mathbf{y})d\mathbf{y})^2\frac{1}{S(\Omega^-_{\gamma,1/4}\cap B_{\delta/2}(\mathbf{x}))^2}d\mathbf{x}\nonumber\\&\leq \int_{\Omega^+_{\gamma,1/4}}\int_{\Omega^-_{\gamma,1/4}\cap B_{\delta/2}(\mathbf{x})}(u(\mathbf{x})-u(\mathbf{y})+u(\mathbf{y}))^2d\mathbf{y}\frac{1}{S(\Omega^-_{\gamma,1/4}\cap B_{\delta/2}(\mathbf{x}))}d\mathbf{x}\nonumber\\&\leq 2\int_{\Omega^+_{\gamma,1/4}}\int_{\Omega^-_{\gamma,1/4}\cap B_{\delta/2}(\mathbf{x})}(u(\mathbf{x})-u(\mathbf{y}))^2d\mathbf{y}\frac{1}{S(\Omega^-_{\gamma,1/4}\cap B_{\delta/2}(\mathbf{x}))}d\mathbf{x}\nonumber\\&\quad+ 2\int_{\Omega^+_{\gamma,1/4}}\int_{\Omega^-_{\gamma,1/4}\cap B_{\delta/2}(\mathbf{x})}u(\mathbf{y})^2d\mathbf{y}\frac{1}{S(\Omega^-_{\gamma,1/4}\cap B_{\delta/2}(\mathbf{x}))}d\mathbf{x}.
   \end{align}
   From boundedness of the kernel in (\ref{lowbound}), it has that:
   \begin{align}
       \int_{\Omega^+_{\gamma,1/4}}u(\mathbf{x})^2d\mathbf{x}&\leq 2\int_{\Omega^+_{\gamma,1/4}}\int_{\Omega^-_{\gamma,1/4}\cap B_{\delta/2}(\mathbf{x})}(u(\mathbf{x})-u(\mathbf{y}))^2\gamma_\delta(\mathbf{x},\mathbf{y})d\mathbf{y}\frac{1}{M_1S(\Omega^-_{\gamma,1/4}\cap B_{\delta/2}(\mathbf{x}))}d\mathbf{x}\nonumber\\&\quad+ 2\int_{\Omega^+_{\gamma,1/4}}\int_{\Omega^-_{\gamma,1/4}\cap B_{\delta/2}(\mathbf{x})}u(\mathbf{y})^2d\mathbf{y}\frac{1}{S(\Omega^-_{\gamma,1/4}\cap B_{\delta/2}(\mathbf{x}))}d\mathbf{x}\nonumber.
   \end{align}
   As for $S(\Omega^-_{\gamma,1/4}\cap B_{\delta/2}(\mathbf{x}))$, because that $\Omega$ is a convex region, from the interior cone condition, it has:
   \begin{equation}
       C_1(\Omega)\delta^2\leq S(\Omega^-_{\gamma,1/4}\cap B_{\delta/2}(\mathbf{x}))\leq C_2(\Omega)\delta^2.\nonumber
   \end{equation}
   Then, together with the asymptotically compatible lower bound of $\gamma_\delta$, by exchanging the integration order, it can be obtained that:
   \begin{align}
       \int_{\Omega^+_{\gamma,1/4}}u(\mathbf{x})^2d\mathbf{x}&\leq C\delta^2A_{\Omega^-_{\gamma,1/4},\Omega^+_{\gamma,1/4}}(u,u)+\frac{2S(B_{\delta/2}(\mathbf{x}))}{S(\Omega^-_{\gamma,1/4}\cap B_{\delta/2}(\mathbf{x}))}\int_{\Omega^-_{\gamma,1/4}}u(\mathbf{x})^2d\mathbf{x}\nonumber\\&\leq C(\delta^2 A_{\Omega^+,\Omega^+_{\gamma}}(u,u)+\int_{\Omega^-_{\gamma,1/4}}u(\mathbf{x})^2d\mathbf{x}).\nonumber
   \end{align}
   Totally the same, if we note that $\hat{\Omega}^+_{\gamma,i/4}:=\Omega^+_{\gamma,i/4}-\Omega^+_{\gamma,(i-1)/4},(i=2,3,4,5)$ and $\hat{\Omega}^+_{\gamma,1/4}=\Omega^+_{\gamma,1/4}$, as $i=2,3,4,5$, from the convexity of $\Omega$, we can get the following inequality:
   \begin{equation}
       \int_{\hat{\Omega}^+_{\gamma,i/4}\cap D}u(\mathbf{x})^2d\mathbf{x}\leq C(\delta^2 A_{D,D/\Omega}(u,u)+\int_{\hat{\Omega}^+_{\gamma,(i-1)/4}}u(\mathbf{x})^2d\mathbf{x})\leq C(\delta^2 A_{D,D/\Omega}(u,u)+\int_{\Omega^-_{\gamma,(i-1)/4}}u(\mathbf{x})^2d\mathbf{x}).\nonumber
   \end{equation}
   Summing both sides of the inequality over $i$ at the same time and the conclusion can be got.
\end{proof}

Then, we can give a finite element analysis of the outer Neumann problem (\ref{problemNbigD}) without considering the approximate ball. 

\begin{theorem}\label{thmfemerrorN*D}(Finite element error for the outer Neumann problem without considering the approximate ball )
    Assuming that the conditions of Lemma \ref{Luerrorsym} hold, as for the solution of (\ref{nonlocalcpvar}) and (\ref{nonlocalcpouterfem}), it has that:
    \begin{equation}
            \|u_\delta^{*,D}-u_{h,\delta}^{*,D}\|_{L^2(\Omega)}= \mathcal{O}(\delta^2)+\mathcal{O}(\delta^{-1.5}h^2),
    \end{equation}
    where $h$ is the maximum grid side length, $u_\delta^{*,D}$ is the solution of the outer Neumann problem defined on $D$ and $u_{h,\delta}^{*,D}$ is the finite element solution of (\ref{nonlocalcpouterfem}). 
\end{theorem}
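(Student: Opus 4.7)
The plan is to imitate the argument for the inner Neumann problem in Theorem \ref{thmfemerror}, now carried out on the extended triangulated domain $D$, and adapted to the fact that the boundary condition is enforced on the two-dimensional layer $D/\Omega$ rather than on $\Omega_\gamma^-$. The strategy is the usual two-step split: introduce an auxiliary solution whose data are the discrete projections $f_h$, $df_h^{out,p}$ and $\widetilde{u_{0,h}}$ but whose test space is the continuous space $S'_\delta(D)$, control the \emph{perturbation/consistency error} between the continuous solution $u_\delta^{*,D}$ and this auxiliary, and then bound the \emph{Galerkin projection error} between the auxiliary and the finite element solution by a C\'ea-type best-approximation inequality.

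For the perturbation step, I would define $u_{au,\delta}^{*,D}\in S'_\delta(D)$ with $\int_\Omega u_{au,\delta}^{*,D}\,d\mathbf{x}=0$ by imposing the same bilinear equation as in \eqref{nonlocalcpouterfem} but tested against every $v\in S'_\delta(D)$. Subtracting this identity from the continuous variational form \eqref{nonlocalcpvar} produces three consistency contributions: $(f-f_h,v)_\Omega$, bounded by $Ch^2\|v\|_{L^2(\Omega)}$; the compatibility-constant perturbation $(df^{out,p}-df_h^{out,p},v)_\Omega$, bounded by $C\delta^{-1}h^2\|v\|_{L^2(\Omega)}$ exactly as in Theorem \ref{thmfemerror} via \eqref{supbound} and the fact that $|D/\Omega|=\mathcal{O}(\delta)$; and the boundary-layer term $\int_{D/\Omega} v\,\N_D^{*,r}(\widetilde{u_0}-\widetilde{u_{0,h}})\,d\mathbf{x}$.

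The last of these is the main obstacle, because the ``boundary data'' live in a two-dimensional strip of measure $\mathcal{O}(\delta)$ and the test function $v$ is a priori only controlled on $\Omega$, not on $D/\Omega$. Using the pointwise interpolation bound $|\N_D^{*,r}(\widetilde{u_0}-\widetilde{u_{0,h}})(\mathbf{x})|\le C\delta^{-2}h^2$, Cauchy--Schwarz, and $|D/\Omega|=\mathcal{O}(\delta)$, this term is controlled by $C\delta^{-3/2}h^2\|v\|_{L^2(D/\Omega)}$. The factor $\|v\|_{L^2(D/\Omega)}$ cannot be dominated by $\|v\|_{L^2(\Omega)}$ directly, and this is precisely where Lemma \ref{middlelemma} enters: it gives $\|v\|_{L^2(D/\Omega)}^2\le C(\delta^2 A_{D/\Omega,D}(v,v)+\|v\|_{L^2(\Omega_{\gamma,1/4}^-)}^2)$, so that $\|v\|_{L^2(D/\Omega)}\le C(\delta|v|_{\delta,D}+\|v\|_{L^2(\Omega)})$. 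Setting $v=u_\delta^{*,D}-u_{au,\delta}^{*,D}$, applying the nonlocal Poincar\'e inequality of Lemma \ref{ballpoincare} to the zero-mean function $v$, and absorbing the $|v|_{\delta,D}$ contribution on the left by Young's inequality, I would obtain $\|u_\delta^{*,D}-u_{au,\delta}^{*,D}\|_{L^2(\Omega)}\le C(h^2+\delta^{-3/2}h^2)$.

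The Galerkin projection step then follows the pattern of \eqref{minimizefemN-}--\eqref{errorseperate}: since $V_h^\delta(D)\subset S'_\delta(D)$, subtracting the FEM identity from the auxiliary identity yields orthogonality in the bilinear form $A_{D,D}$, and hence $|u_{au,\delta}^{*,D}-u_{h,\delta}^{*,D}|_{\delta,D}\le |u_{au,\delta}^{*,D}-I_hu_0|_{\delta,D}$, where $I_hu_0$ denotes the Lagrange interpolant of $u_0$ on $\mathcal{T}_\delta^h(D)$. Inserting $u_\delta^{*,D}$ and $u_0$ via the triangle inequality, the three resulting seminorms are estimated by the perturbation bound just obtained, the model error $\mathcal{O}(\delta^2)$ from Theorem \ref{thm59regionD}, and $|u_0-I_hu_0|_{\delta,D}\le C\delta^{-1}\|u_0-I_hu_0\|_{L^2(D)}=\mathcal{O}(\delta^{-1}h^2)$, which follows from the inverse-type right-hand side of Lemma \ref{ballpoincare} together with the standard $C^4$ interpolation estimate. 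A final application of the Poincar\'e inequality converts the energy seminorm back into $\|\cdot\|_{L^2(\Omega)}$, and the triangle inequality produces the stated rate $\mathcal{O}(\delta^2)+\mathcal{O}(\delta^{-3/2}h^2)$.
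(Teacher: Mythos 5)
Your proposal is correct and follows essentially the same route as the paper: the same auxiliary problem with discretized data tested in the continuous space $S'_\delta(D)$, the same three-way split of the consistency error, the same invocation of Lemma \ref{middlelemma} to transfer control of $\|v\|_{L^2(D/\Omega)}$ to the energy seminorm plus an interior $L^2$ norm, and the same C\'ea-type Galerkin orthogonality followed by the triangle inequality and the interpolation/Poincar\'e estimates. The only cosmetic difference is that where you invoke Young's inequality to absorb the energy seminorm, the paper instead uses the monotonicity $A_{\Omega,\Omega}(v,v)+A_{D/\Omega,D}(v,v)\le A_{D,D}(v,v)$ together with the Poincar\'e inequality to put the bound into a self-improving form $A_{D,D}(v,v)\le C(h^2+\delta^{-1.5}h^2)\sqrt{A_{D,D}(v,v)}$; the two absorptions yield the identical rate.
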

\begin{proof}
Again, we introduce an auxiliary problem that  $\forall v\in S_\delta^\prime(D)$, finding $u_{au,\delta}^{*,D} \in S_\delta^\prime(D)$, such that:
    \begin{align}\label{auxiliaryouterfem}
        \frac{1}{2}&\int_{D}\int_{D} (v(\mathbf{x})-v(\mathbf{y}))(u_{au,\delta}^{*,D}(\mathbf{x})-u_{au,\delta}^{*,D}(\mathbf{y}))\gamma_\delta(\mathbf{x},\mathbf{y})d\mathbf{y}d\mathbf{x}\nonumber\\&=\int_\Omega v(\mathbf{x})(f_h(\mathbf{x})+df_h^{out,p})d\mathbf{x}+\int_{D/\Omega} v(\mathbf{x})\int_{D}(\widetilde{u_{0,h}}(\mathbf{x})-\widetilde{u_{0,h}}(\mathbf{y}))\gamma_\delta(\mathbf{x},\mathbf{y})d\mathbf{y}d\mathbf{x},
    \end{align}
    where $u_{au,\delta}^{*,D}$ also satisfies a mean value of 0 in $\Omega$. The same as the proof of inequality (\ref{eqn555newmid}), it has     \begin{align}
        \frac{1}{2}&\int_{D}\int_{D} ((u_{\delta}^{*,D}-u_{au,\delta}^{*,D})(\mathbf{x})-(u_{\delta}^{*,D}-u_{au,\delta}^{*,D})(\mathbf{y}))^2\gamma_\delta(\mathbf{x},\mathbf{y})d\mathbf{y}d\mathbf{x}\nonumber\\&\leq
        C(h^2+\delta^{-1}h^2)\|u_{\delta}^{*,D}-u_{au,\delta}^{*,D}\|_{L^2(\Omega)}+C\delta^{-2}h^2\sqrt{S(D/\Omega)}\|u_{\delta}^{*,D}-u_{au,\delta}^{*,D}\|_{L^2(D/\Omega)}.
    \end{align}
   
    From Lemma \ref{middlelemma}, we can get that:
    \begin{align}\label{eqn562new}
        \frac{1}{2}&\int_{D}\int_{D} ((u_{\delta}^{*,D}-u_{au,\delta}^{*,D})(\mathbf{x})-(u_{\delta}^{*,D}-u_{au,\delta}^{*,D})(\mathbf{y}))^2\gamma_\delta(\mathbf{x},\mathbf{y})d\mathbf{y}d\mathbf{x}\nonumber\\&\leq C\delta^{-1.5}h^2\sqrt{\delta^2A_{D/\Omega,D}(u_{\delta}^{*,D}-u_{au,\delta}^{*,D},u_{\delta}^{*,D}-u_{au,\delta}^{*,D})+\|u_{\delta}^{*,D}-u_{au,\delta}^{*,D}\|^2_{L^2(\Omega_{\gamma,1/4}^-)}}
        \nonumber\\&\quad+C(h^2+\delta^{-1}h^2)\|u_{\delta}^{*,D}-u_{au,\delta}^{*,D}\|_{L^2(\Omega)}\nonumber\\&\leq C(h^2+\delta^{-1}h^2+\delta^{-1.5}h^2)\sqrt{A_{D/\Omega,D}(u_{\delta}^{*,D}-u_{au,\delta}^{*,D},u_{\delta}^{*,D}-u_{au,\delta}^{*,D})+\|u_{\delta}^{*,D}-u_{au,\delta}^{*,D}\|^2_{L^2(\Omega)}}.
    \end{align}
     
    Mentioned that $A_{\Omega,\Omega}(u,u)+A_{D/\Omega,D}(u,u)\leq A_{D,D}(u,u)$, from equation (\ref{eqn562new}), we can get that
    \begin{align}
        \|u_{\delta}^{*,D}-u_{au,\delta}^{*,D}&\|^2_{L^2(\Omega)}\leq \|u_{\delta}^{*,D}-u_{au,\delta}^{*,D}\|^2_{L^2(\Omega)}+A_{D/\Omega,D}(u_{\delta}^{*,D}-u_{au,\delta}^{*,D},u_{\delta}^{*,D}-u_{au,\delta}^{*,D})\nonumber\\&
        \leq A_{\Omega,\Omega}(u_{\delta}^{*,D}-u_{au,\delta}^{*,D},u_{\delta}^{*,D}-u_{au,\delta}^{*,D})+A_{D/\Omega,D}(u_{\delta}^{*,D}-u_{au,\delta}^{*,D},u_{\delta}^{*,D}-u_{au,\delta}^{*,D})\nonumber\\&\leq A_{D,D}(u_{\delta}^{*,D}-u_{au,\delta}^{*,D},u_{\delta}^{*,D}-u_{au,\delta}^{*,D})\nonumber\\&\leq C(h^2+\delta^{-1.5}h^2)\sqrt{A_{D/\Omega,D}(u_{\delta}^{*,D}-u_{au,\delta}^{*,D},u_{\delta}^{*,D}-u_{au,\delta}^{*,D})+\|u_{\delta}^{*,D}-u_{au,\delta}^{*,D}\|^2_{L^2(\Omega)}},
    \end{align}
    which implies that
    \begin{align}\label{estimationneedalotstr}
        \|u_{\delta}^{*,D}-u_{au,\delta}^{*,D}\|^2_{L^2(\Omega)}&\leq A_{\Omega,\Omega}(u_{\delta}^{*,D}-u_{au,\delta}^{*,D},u_{\delta}^{*,D}-u_{au,\delta}^{*,D})+A_{D/\Omega,D}(u_{\delta}^{*,D}-u_{au,\delta}^{*,D},u_{\delta}^{*,D}-u_{au,\delta}^{*,D})\nonumber\\&\leq C(h^2+\delta^{-1.5}h^2)^2.
    \end{align}
By the nonlocal Poincar\'e inequality in Lemma \ref{ballpoincare}, Theorem \ref{thm59regionD} and  (\ref{estimationneedalotstr}), we have
    \begin{align}\label{errorseperateN*}
        \|u_{au,\delta}^{*,D}&-u_{h,\delta}^{*,D}\|^2_{L^2(\Omega)}\leq (|u_{au,\delta}^{*,D}-u_{h,\delta}^{*,D}|_\delta^{D})^2\leq (|u_{au,\delta}^{*,D}-I_hu_0|_\delta^{D})^2\nonumber\\&
        \leq (|u_{au,\delta}^{*,D}-u_\delta^{*,D}|_\delta^{D})^2 +(|u_0-u_\delta^{*,D}|_\delta^{D})^2 +(|u_0-I_hu_0|_\delta^{D})^2\nonumber\\&
        \leq C(h^2+\delta^{-1.5}h^2)^2+C\delta^4+C\delta^{-2}\|u_0-I_hu_0\|_{L^2(D)}^2\nonumber\\&\leq C((h^2+\delta^{-1.5}h^2)^2+\delta^4+\delta^{-2}h^4)\leq C(\delta^4+\delta^{-3}h^4),
    \end{align}
    where $I_hu_0$ is the Lagrangian type interpolation of $u_0$ on $\mathcal{T}_\delta^h(D)$
    This implies
    \begin{align}
        \|u_{\delta}^{*,D}-u_{h,\delta}^{*,D}\|_{L^2(\Omega)}&\leq \|u_\delta^{*,D}-u_{au,\delta}^{*,D}\|_{L^2(\Omega)}+\|u_{au,\delta}^{*,D}-u_{h,\delta}^{*,D}\|_{L^2(\Omega)}
        \leq C\delta^2+C\delta^{-1.5}h^2.
    \end{align}
    The proof is completed.
\end{proof}

Together with Theorem $\ref{thm59regionD}$, we can get the following corollary:
\begin{corollary}[Asymptotic Compatibility for the inner Neumann problem without the approximate ball]\label{corollary43}
    Assuming that $u_0$ is the solution of the local problem and $u_{h,\delta}^*$ is the finite element solution of outer Neumann problem in (\ref{nonlocalcpouterfem}), if the conditions of Lemma \ref{Luerrorsym} hold, it has:
     \begin{equation}
       \|u_0-u_{h,\delta}^{*, D}\|_{L^2(\Omega)} = \mathcal{O}(\delta^2) +\mathcal{O}(\delta^{-1.5}h^2).
    \end{equation}
\end{corollary}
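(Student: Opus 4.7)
The proof is essentially a triangle–inequality splitting of the total error into a model error and a discretization error, both of which have already been controlled in the preceding theorems. First I would write
\begin{equation*}
\|u_0-u_{h,\delta}^{*,D}\|_{L^2(\Omega)}
\;\le\;
\|u_0-u_\delta^{*,D}\|_{L^2(\Omega)}
+\|u_\delta^{*,D}-u_{h,\delta}^{*,D}\|_{L^2(\Omega)} ,
\end{equation*}
where $u_\delta^{*,D}$ is the exact solution of the nonlocal outer Neumann problem (\ref{problemNbigD}) posed on the enlarged domain $D$, which we have introduced as the intermediate object precisely in order to separate modelling from discretization.

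Next I would invoke Theorem \ref{thm59regionD} to bound the first term on the right by $C\delta^2$; this step uses only the compatibility of the modified source term $df_{out}^{p}$, the classical Taylor estimate (\ref{classicalC^4result}), and the nonlocal Poincar\'e inequality of Lemma \ref{poincaresysp} applied on $D$, as was done inside (\ref{DerrorL2}). For the second term I would apply Theorem \ref{thmfemerrorN*D}, which yields the bound $C\delta^2+C\delta^{-1.5}h^2$ and which in turn relies on the auxiliary discrete problem (\ref{auxiliaryouterfem}), the boundary-layer bridging Lemma \ref{middlelemma}, and Galerkin best approximation via the Lagrangian interpolant $I_hu_0$. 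Summing the two bounds and absorbing $\mathcal{O}(\delta^2)+\mathcal{O}(\delta^2)$ into a single $\mathcal{O}(\delta^2)$ term gives exactly
\begin{equation*}
\|u_0-u_{h,\delta}^{*,D}\|_{L^2(\Omega)}= \mathcal{O}(\delta^2)+\mathcal{O}(\delta^{-1.5}h^2),
\end{equation*}
which is the claim.

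Since both ingredients are already established, there is no genuine obstacle at this step: all of the analytical difficulty (the boundary-layer estimate that produced the $\delta^{-1.5}$ weight, the verification that choosing the enlarged triangulation domain $D\subset\widehat{\Omega}_{\gamma,5/4}^+$ introduces no additional geometric error, and the control of the modified compatibility constant $df_{h}^{out,p}$) has been handled inside Theorems \ref{thm59regionD} and \ref{thmfemerrorN*D}. The only item worth emphasizing in the write-up is that the two $\mathcal{O}(\delta^2)$ contributions come from conceptually different sources, one from the nonlocal-to-local model error on $D$ and one from the quadrature/finite element consistency of the nonlocal kernel, so the final rate is not degraded by adding them.
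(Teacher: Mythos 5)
Your proof is correct and coincides with the paper's: the corollary is obtained by the triangle inequality, splitting $\|u_0-u_{h,\delta}^{*,D}\|_{L^2(\Omega)}$ through the exact nonlocal solution $u_\delta^{*,D}$, then invoking Theorem \ref{thm59regionD} for the $\mathcal{O}(\delta^2)$ model error and Theorem \ref{thmfemerrorN*D} for the $\mathcal{O}(\delta^2)+\mathcal{O}(\delta^{-1.5}h^2)$ discretization error. Your added commentary about where the two $\mathcal{O}(\delta^2)$ terms originate is accurate but not part of the paper's (one-line) argument.
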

We next discuss the error estimate of finite element method caused by the approximate ball with the Neumann boundary condition $N^*$. We first define the outer Neumann problem over domain $D$ by 
\begin{equation}\label{problemsymN*D}
    \left\{
    \begin{aligned}
    &\L_{\delta,sym} u= f_{out,sym}^p, \quad in~ \Omega,\\
    &\N^{*,r}_{D,sym} u=\N^{*,r}_{D,sym} \widetilde{u_0}, \quad in ~ D/\Omega,\quad\int_{\mathbf{x}\in\Omega} u d\mathbf{x}=0,
\end{aligned}
    \right.
\end{equation}
where the Neumann boundary operator considering both parts of geometry errors is 
\begin{equation}
    \N^{*,r}_{D,sym} u(\mathbf{x})=\int_{D\cap B_\delta(\mathbf{x})}(u(\mathbf{x})-u(\mathbf{y}))\gamma_{\delta,sym}(\mathbf{x},\mathbf{y})d\mathbf{y} \quad (\mathbf{x}\in D/\Omega),
\end{equation}
and the right-hand side term $f_{out,sym}^p$ is defined as 
\begin{equation}
    f_{out,sym}^{p}:=f+\frac{1}{S(\Omega)}\left(\int_{\partial\Omega} gd\mathbf{x}-\int_{D/\Omega} \N_{D,sym}^{*,r} \widetilde{u_0}d\mathbf{x}\right):=f+df_{sym}^{out,p}.
\end{equation}
By Lemma \ref{Luerrorsym} and using the similar proofs for Theorems \ref{thm56approxball} and  \ref{thm59regionD}, we have the following result. 
\begin{theorem}[Asymptotic compatibility for the outer Neumann problem with the approximate ball]
    Suppose conditions of Lemma \ref{Luerrorsym} hold, the solution of equation (\ref{problemsymN*D}) is $u_{\delta,sym}^{*,D}$, then it holds
    \begin{equation}\label{eqn599new}
    \|u_{\delta,sym}^{*,D}-u_0\|_{L^2(\Omega)}=\mathcal{O}(\delta^{-1}n_\delta^{-\lambda})+\mathcal{O}(\delta^2).
    \end{equation}
\end{theorem}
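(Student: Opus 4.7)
The plan is to parallel the two previous arguments and fuse them: combine the symmetric-kernel consistency of Theorem \ref{thm56approxball} with the enlarged-domain strategy of Theorem \ref{thm59regionD}. First, I would rewrite the governing equation of (\ref{problemsymN*D}) as
\begin{equation*}
\L_{\delta,sym}(u_{\delta,sym}^{*,D}-\widetilde{u_0})(\mathbf{x})=-\Delta u_0(\mathbf{x})-\L_{\delta,sym}\widetilde{u_0}(\mathbf{x})+df_{sym}^{out,p},\quad \mathbf{x}\in\Omega,
\end{equation*}
and invoke Lemma \ref{Luerrorsym} to bound the first two right-hand-side terms by $\mathcal{O}(\delta^{-1}n_\delta^{-\lambda})+\mathcal{O}(\delta^2)$.

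Next, I would control the compatibility correction $df_{sym}^{out,p}$ exactly as in the derivation of (\ref{df-result}) and (\ref{df*result}): by the nonlocal Green identity associated to $\N^{*,r}_{D,sym}$ on $D$, one has $(\L_{\delta,sym}\widetilde{u_0},1)_\Omega=-(\N^{*,r}_{D,sym}\widetilde{u_0},1)_{D\setminus\Omega}$, so
\begin{equation*}
S(\Omega)\,df_{sym}^{out,p}=(\Delta u_0+\L_{\delta,sym}\widetilde{u_0},1)_\Omega,
\end{equation*}
and Lemma \ref{Luerrorsym} once more yields $|df_{sym}^{out,p}|\le C(\delta^{-1}n_\delta^{-\lambda}+\delta^2)$.

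Then, I would test the equation with $v=u_{\delta,sym}^{*,D}-\widetilde{u_0}$. Using the nonlocal Green identity $(\L_{\delta,sym}u,v)_\Omega=A_{D,D}^{sym}(u,v)-(\N^{*,r}_{D,sym}u,v)_{D\setminus\Omega}$ on the enlarged domain $D$, the boundary term cancels due to the Neumann condition $\N^{*,r}_{D,sym}u_{\delta,sym}^{*,D}=\N^{*,r}_{D,sym}\widetilde{u_0}$ in (\ref{problemsymN*D}). This leaves $(|u_{\delta,sym}^{*,D}-\widetilde{u_0}|_{\delta,D}^{sym})^2$ on the left, while Cauchy--Schwarz on the right produces a factor $\|u_{\delta,sym}^{*,D}-u_0\|_{L^2(\Omega)}$ multiplied by $\mathcal{O}(\delta^{-1}n_\delta^{-\lambda}+\delta^2)$. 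A direct application of the modified Poincar\'e inequality in Lemma \ref{poincaresysp} (using the zero-mean constraint on $\Omega$) then gives
\begin{equation*}
\|u_{\delta,sym}^{*,D}-u_0\|_{L^2(\Omega)}\le C(\delta^{-1}n_\delta^{-\lambda}+\delta^2),
\end{equation*}
which is (\ref{eqn599new}).

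The main obstacle I expect is the Poincar\'e step: Lemma \ref{poincaresysp} is phrased on $\Omega$ with the $A_{\Omega,\Omega}^{sym}$ seminorm, whereas here the natural energy is the larger $A_{D,D}^{sym}$. To close the estimate I would either apply the interior lemma on $\Omega$ and exploit $A_{\Omega,\Omega}^{sym}\le A_{D,D}^{sym}$, since the only quantity I need to bound in $L^2(\Omega)$ is controlled via the zero-mean constraint, or, if the energy on $\Omega$ is insufficient, invoke Lemma \ref{middlelemma} to transfer mass from $D\setminus\Omega$ into $\Omega_{\gamma,1/4}^-$ at the subdominant cost $\delta^2 A_{D\setminus\Omega,D}$. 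A minor technical point is that the truncated symmetric kernel loses strict symmetry only for $\mathbf{x}$ near $\partial D$; the hypothesis $D\subset\widehat{\Omega}_{\gamma,5/4}^+$ confines any such defect to a boundary layer of width at most $\delta/4$, whose contribution is already of order $\delta^2$ and absorbed into the model error.
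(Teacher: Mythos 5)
Your proposal is correct and follows essentially the same route the paper intends: it states that this result follows ``By Lemma \ref{Luerrorsym} and using the similar proofs for Theorems \ref{thm56approxball} and \ref{thm59regionD}'' without writing out the details, and your reconstruction fuses exactly those two arguments — the nonlocal Green identity on $D$ to cancel the Neumann term, Lemma \ref{Luerrorsym} for both the consistency residual and the compatibility correction $df_{sym}^{out,p}$, and the Poincar\'e inequality on $\Omega$ together with the seminorm monotonicity $A^{sym}_{\Omega,\Omega}\le A^{sym}_{D,D}$. The only superfluous remark is your closing worry about loss of symmetry near $\partial D$: the kernel $\gamma_{\delta,sym}(\mathbf{x},\mathbf{y})=\tfrac12\bigl(\gamma_{\delta,pol}(\mathbf{x},\mathbf{y})+\gamma_{\delta,pol}(\mathbf{y},\mathbf{x})\bigr)$ is symmetric by construction everywhere, so no boundary-layer patch is needed, and your preferred option (Poincar\'e on $\Omega$ with domain monotonicity) already closes the estimate without invoking Lemma \ref{middlelemma}.
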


   The variational form of problem (\ref{problemsymN*D}) is as follows:
\begin{equation}\label{nonlocalcpouterballvar}
    \begin{aligned}
        &\int_\Omega v(\mathbf{x})\int_\Omega (u_{\delta,sym}^{*,D}(\mathbf{x})-u_{\delta,sym}^{*,D}(\mathbf{y}))\gamma_{\delta,sym}(\mathbf{x},\mathbf{y})d\mathbf{y}d\mathbf{x}\\&\quad=\int_\Omega v(\mathbf{x})f_{out,sym}^p(\mathbf{x})d\mathbf{x}+\int_{D/\Omega} v(\mathbf{x})\int_{D} (\widetilde{u_0}(\mathbf{x})-\widetilde{u_0}(\mathbf{y}))\gamma_{\delta,sym}(\mathbf{x},\mathbf{y})d\mathbf{y}d\mathbf{x}, 
    \end{aligned}
\end{equation} 
where the mean value of $u_{\delta,sym}^{*,D}$ on $\Omega$ is zero and $v\in S_{\delta,sym}^\prime(\Omega)$.
And the finite element scheme (\ref{nonlocalcpouterballvar}) changes to the following form: Find $u^{*,D}_{h,b}\in V_h^\delta(D)$ such that $\forall v_h\in V_h^\delta(D)$, 
\begin{equation}\label{nonlocalcpouterfemball}
    \begin{aligned}
        &\frac{1}{2}A_{D,D}^{sym}(u_{h,b}^{*,D},v_h)=\int_{\Omega}v_h(\mathbf{x})(f_h(\mathbf{x})+df_{h,sym}^{out,p})d\mathbf{x}-\int_{D/\Omega}v_h(\mathbf{x})\N^{*,r}_{D,sym}u_{h,b}^{*,D}d\mathbf{x},
    \end{aligned}
\end{equation}
where the mean value of $u_{h,b}^{*,D}$ on $\Omega$ is zero and 
\begin{equation}
    df_{h,sym}^{out,p}=\frac{1}{S(\Omega)}\Big(\int_{\partial\Omega} gd\mathbf{x}-\int_{D} \N_{D,sym}^{*,r} \widetilde{u_{0,h}}d\mathbf{x}\Big).
\end{equation}
When taking the approximate ball into account, similar to Lemma \ref{middlelemma}, it has the following corollary:
\begin{corollary}\label{lastproofremark}
    Suppose that the conditions of Lemma \ref{middlelemma} hold and the approximate ball is generated by the methods shown in \cite{ChenMaZhang}, if $\delta\geq 2h$, then, it holds 
    \begin{equation}
        \|u\|^2_{L^2(D)}\leq C(\delta^2A^{sym}_{D/\Omega,D}(u,u)+\|u\|^2_{L^2(\Omega_{\gamma,1/4}^-)}),
    \end{equation}
    where $\Omega^-_{\gamma,1/4}:=\{\mathbf{x}\in\Omega:dist(\mathbf{x},\partial\Omega)<\frac{\delta}{4}\}$.
\end{corollary}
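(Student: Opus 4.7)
The plan is to revisit the proof of Lemma~\ref{middlelemma} and identify the one place where the properties of the kernel are actually used, namely the pointwise lower bound (\ref{lowbound}) applied to pairs $(\mathbf{x},\mathbf{y})$ with $|\mathbf{x}-\mathbf{y}|\le\delta/2$. Every other step of that proof (the averaging representation of $u(\mathbf{x})^2$ via $\Omega_{\gamma,1/4}^-\cap B_{\delta/2}(\mathbf{x})$, the Cauchy--Schwarz bound $(a+b)^2\le 2a^2+2b^2$, the convex-domain area estimate $S(\Omega_{\gamma,1/4}^-\cap B_{\delta/2}(\mathbf{x}))\asymp\delta^2$, and the telescoping over the five annular strips $\hat{\Omega}^+_{\gamma,i/4}$) is purely geometric and does not see which kernel we are using. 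Hence it suffices to prove the symmetric analogue of (\ref{lowbound}): there exists $c>0$ independent of $\delta$ such that
\[
\gamma_{\delta,\mathrm{sym}}(\mathbf{x},\mathbf{y})\ \ge\ c\,\delta^{-4},\qquad \text{whenever } |\mathbf{x}-\mathbf{y}|\le \delta/2.
\]

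To establish this pointwise bound I would exploit the approximate-ball construction of \cite{ChenMaZhang}, whose inscribed polygons $B_{\delta,\mathrm{pol}}(\mathbf{x})$ have vertices on $\partial B_\delta(\mathbf{x})$ and longest chord controlled by the mesh size $h$. Elementary plane geometry then gives that the inradius of such a polygon is at least $\sqrt{\delta^2-(h/2)^2}$, which under the hypothesis $\delta\ge 2h$ is strictly larger than $\delta/2$. Consequently $B_{\delta/2}(\mathbf{x})\subset B_{\delta,\mathrm{pol}}(\mathbf{x})$, so $\gamma_{\delta,\mathrm{pol}}(\mathbf{x},\mathbf{y})=\gamma_\delta(\mathbf{x},\mathbf{y})$ on that ball; swapping $\mathbf{x}\leftrightarrow\mathbf{y}$ and using the radial symmetry of $\widetilde{\gamma}_\delta$ gives the same identity for $\gamma_{\delta,\mathrm{pol}}(\mathbf{y},\mathbf{x})$. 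Averaging yields $\gamma_{\delta,\mathrm{sym}}(\mathbf{x},\mathbf{y})=\gamma_\delta(\mathbf{x},\mathbf{y})$ on $|\mathbf{x}-\mathbf{y}|\le\delta/2$, and (\ref{lowbound}) supplies the desired lower bound.

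With the lower bound in hand, I would replay the argument of Lemma~\ref{middlelemma} line by line, substituting $\gamma_{\delta,\mathrm{sym}}$ for $\gamma_\delta$ in the integrals and $A^{sym}_{D/\Omega,D}(u,u)$ for $A_{D/\Omega,D}(u,u)$ on the right-hand side. The strip decomposition $\hat{\Omega}^+_{\gamma,i/4}$ for $i=1,\dots,5$ still covers $D$ because $D\subset \widehat{\Omega}_{\gamma,5/4}^+$ by the standing assumption on the extended region; the convexity of $\Omega$ is invoked exactly as before to guarantee $S(\Omega^-_{\gamma,1/4}\cap B_{\delta/2}(\mathbf{x}))\ge C(\Omega)\delta^2$; and summing the inductive bound across strips produces the stated inequality.

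The main obstacle, in my view, is the geometric calibration step: verifying that the specific polygon construction of \cite{ChenMaZhang} does enjoy a uniform chord-length bound $H_{\delta,\mathbf{x}}\le c_0 h$ with $c_0$ small enough for $\delta\ge 2h$ to imply inradius $>\delta/2$. If the construction only guarantees a weaker bound (e.g., with a larger constant $c_0$), then the threshold $\delta\ge 2h$ would have to be replaced by $\delta\ge c_0'h$ for some $c_0'>2$; the analytical content of the argument would be unaffected. All other steps are routine modifications of the proof of Lemma~\ref{middlelemma}.
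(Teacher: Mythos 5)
Your proposal is correct and matches the paper's proof in every essential respect: both reduce the corollary to a pointwise comparison between $\gamma_{\delta,\mathrm{sym}}$ and $\gamma_\delta$ on pairs with $|\mathbf{x}-\mathbf{y}|\le\delta/2$, both invoke the nesting $B_{\delta/2}(\mathbf{x})\cap D\subset B_{\delta,\mathrm{pol}}(\mathbf{x})\cap D$ from \cite{ChenMaZhang} under $\delta\ge 2h$, and both then replay the strip-telescoping argument of Lemma~\ref{middlelemma} with $\gamma_{\delta,\mathrm{sym}}$ in place of $\gamma_\delta$. Two small points worth noting: your inclusion claim should be stated modulo the intersection with $D$ (near $\partial D$ the polygon $B_{\delta,\mathrm{pol}}(\mathbf{x})$ is inscribed in $B_\delta(\mathbf{x})\cap D$, so $B_{\delta/2}(\mathbf{x})\not\subset B_{\delta,\mathrm{pol}}(\mathbf{x})$ in general), and the paper settles for the one-sided bound $\gamma_\delta(\mathbf{x},\mathbf{y})\le 2\gamma_{\delta,\mathrm{sym}}(\mathbf{x},\mathbf{y})$ rather than the equality $\gamma_{\delta,\mathrm{sym}}=\gamma_\delta$ you aim for — the weaker bound already suffices and only requires one of the two directional polygons to contain the $\delta/2$ ball, which is more robust near $\partial D$. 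Your caveat about the vertex-on-sphere geometric model is also well placed: for the Nocaps construction the polygon boundary consists of interior mesh edges, so the relevant inradius estimate is more like $\delta-h$ than $\sqrt{\delta^2-(h/2)^2}$, which is exactly why the threshold is $\delta\ge 2h$.
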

\begin{proof}
    As $\mathbf{y}\in \Omega_{\gamma,1/4}^-$, $\mathbf{x}\in\Omega_{\gamma,1/4}^+$ when $\delta\geq h$, then $r_{\delta,\mathbf{y}}>\delta /2$, 
    which implies that:
    \begin{equation}
        \gamma_\delta(\mathbf{x},\mathbf{y})=\gamma_{\delta,pol}(\mathbf{x},\mathbf{y})\leq2\gamma_{\delta,sym}(\mathbf{x},\mathbf{y}).\nonumber
    \end{equation}
  It follows from (\ref{eqn543new}) that
    \begin{align}
       \int_{\Omega^+_{\gamma,1/4}}u(\mathbf{x})^2d\mathbf{x}&\leq 4\int_{\Omega^+_{\gamma,1/4}}\int_{\Omega^-_{\gamma,1/4}\cap B_{\delta/2}(\mathbf{x})}(u(\mathbf{x})-u(\mathbf{y}))^2\gamma_{\delta,sym}(\mathbf{x},\mathbf{y})d\mathbf{y}\frac{1}{M_1S(\Omega^-_{\gamma,1/4}\cap B_{\delta/2}(\mathbf{x}))}d\mathbf{x}\nonumber\\&\quad+ 2\int_{\Omega^+_{\gamma,1/4}}\int_{\Omega^-_{\gamma,1/4}\cap B_{\delta/2}(\mathbf{x})}u(\mathbf{y})^2d\mathbf{y}\frac{1}{S(\Omega^-_{\gamma,1/4}\cap B_{\delta/2}(\mathbf{x}))}d\mathbf{x}.
   \end{align}
 Thus, similar to the proof for Lemma \ref{middlelemma}, we have 
   \begin{align}\label{eqn551new}
       \int_{\Omega^+_{\gamma,1/4}}u(\mathbf{x})^2d\mathbf{x}&\leq C\Big(\delta^2 A^{sym}_{D,D/\Omega}(u,u)+\int_{\Omega^-_{\gamma,1/4}}u(\mathbf{x})^2d\mathbf{x}\Big).
   \end{align}
   Based on the proof presented in this corollary, the core of our proof lies in the assertion that for \( \mathbf{y} \in \hat{\Omega}_{\gamma,i/4}^+ \) and \( \mathbf{x} \in \hat{\Omega}_{\gamma,(i+1)/4}^+ \cap D\), where \( i = 1,2,3,4 \), the inequality \( \gamma_\delta(\mathbf{x},\mathbf{y}) \leq 2\gamma_{\delta,\mathrm{sym}}(\mathbf{x},\mathbf{y}) \) holds. It follows from \cite{ChenMaZhang} that, when $\delta\geq 2h$, the approximate ball strategies imply the following relationship:
   \begin{equation}
       B_{\delta/2}(\mathbf{x})\cap D\subset B_\delta^{pol}(\mathbf{x})\cap D\subset B_{\delta}(\mathbf{x})\cap D,~(\mathbf{x}\in D),\nonumber
   \end{equation}
   which means that $\forall \mathbf{x}\in \hat{\Omega}_{\gamma,(i+1)/4}\cap D$, if $\mathbf{y}\in \hat{\Omega}_{\gamma,i/4}\cap B_{\delta/2}(\mathbf{x})$, then, $\gamma_\delta(\mathbf{x},\mathbf{y})=\gamma_{\delta,pol}(\mathbf{x},\mathbf{y})\leq2\gamma_{\delta,sym}(\mathbf{x},\mathbf{y})$. 
Using the same argument as the proof of Lemma \ref{middlelemma} and Eq. (\ref{eqn551new}), we finish the proof.
   \end{proof}

\begin{theorem}[Error estimate of FEM for the outer Neumann problem with the approximate ball]\label{finalresultwithapproximateballN*}
    Assuming that the conditions of Lemma \ref{Luerrorsym} hold, as for the solution of (\ref{nonlocalcpouterballvar}) and (\ref{nonlocalcpouterfemball}), it holds
    \begin{equation}
       \|u_{\delta,sym}^{*,D}-u_{h,b}^{*,D}\|_{L^2(\Omega)}= \mathcal{O}(\delta^{-1}n_\delta^{-\lambda})+\mathcal{O}(\delta^2)+\mathcal{O}(\delta^{-1.5}h^2),
    \end{equation}
    where $u_{\delta,sym}^{*,D}$ is the solution of the nonlocal outer Neumann problem (\ref{problemsymN*D}) and $u_{h,b}^{*,D}$ is the finite element solution in (\ref{nonlocalcpouterfemball}).
\end{theorem}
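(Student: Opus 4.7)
The plan is to combine the auxiliary–problem argument used in the proof of Theorem~\ref{finalresultwithapproximateball} with the boundary–layer control used in the proof of Theorem~\ref{thmfemerrorN*D}, so that the three error sources — the polygonal approximation error $\mathcal{O}(\delta^{-1}n_\delta^{-\lambda})$, the nonlocal modelling error $\mathcal{O}(\delta^2)$, and the finite element discretization error $\mathcal{O}(\delta^{-1.5}h^2)$ — appear additively. Concretely, I would first introduce an auxiliary problem of the form \eqref{auxiliaryouterfem} but with the symmetric polygonal kernel $\gamma_{\delta,\mathrm{sym}}$ and the symmetric operators from \eqref{problemsymN*D}, seeking $u_{ab,\delta}^{*,D}\in S_{\delta,\mathrm{sym}}'(D)$ with zero mean on $\Omega$, so that the right-hand side of the auxiliary problem matches that of the finite element scheme \eqref{nonlocalcpouterfemball}. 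This separates the triangle inequality into a modelling/data-interpolation part $\|u_{\delta,\mathrm{sym}}^{*,D}-u_{ab,\delta}^{*,D}\|_{L^2(\Omega)}$ and a Galerkin part $\|u_{ab,\delta}^{*,D}-u_{h,b}^{*,D}\|_{L^2(\Omega)}$.

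For the first part, subtracting \eqref{nonlocalcpouterballvar} from the symmetric auxiliary problem yields a residual driven by $f-f_h$, $df_{sym}^{out,p}-df_{h,sym}^{out,p}$, and $\N^{*,r}_{D,\mathrm{sym}}(\widetilde{u_0}-\widetilde{u_{0,h}})$. Exactly as in the derivation of \eqref{eqn555newmid}, the interpolation estimate $\|\widetilde{u_0}-\widetilde{u_{0,h}}\|_{W^{0,\infty}}\lesssim h^2$ together with the kernel bound \eqref{supbound} produces the contributions $\mathcal{O}(h^2)\|v\|_{L^2(\Omega)}$ on $\Omega$ and $\mathcal{O}(\delta^{-2}h^2)\|v\|_{L^2(D\setminus\Omega)}$ on the outer ring. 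Here the key geometric difficulty that distinguishes this theorem from Theorem~\ref{finalresultwithapproximateball} is that $v$ is now also nonzero on $D\setminus\Omega$, so one cannot estimate $\|v\|_{L^2(D\setminus\Omega)}$ directly by a constant; instead I would invoke Corollary~\ref{lastproofremark} (the symmetric analogue of Lemma~\ref{middlelemma}) to bound $\|v\|_{L^2(D\setminus\Omega)}^2$ by $\delta^2 A_{D\setminus\Omega,D}^{sym}(v,v)+\|v\|_{L^2(\Omega_{\gamma,1/4}^-)}^2$. Choosing $v=u_{\delta,\mathrm{sym}}^{*,D}-u_{ab,\delta}^{*,D}$ and absorbing the energy term on the left (as done in passing from \eqref{eqn562new} to \eqref{estimationneedalotstr}) yields
\[
\|u_{\delta,\mathrm{sym}}^{*,D}-u_{ab,\delta}^{*,D}\|_{L^2(\Omega)}^2 + A_{D\setminus\Omega,D}^{sym}(\cdot,\cdot)\ \le\ C(h^2+\delta^{-1.5}h^2)^2.
\]

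For the Galerkin part, restricting the test function to $V_h^\delta(D)\subset S_{\delta,\mathrm{sym}}'(D)$ and subtracting \eqref{nonlocalcpouterfemball} from the symmetric auxiliary problem gives the Galerkin orthogonality
\[
\tfrac12 A_{D,D}^{sym}\!\bigl(u_{ab,\delta}^{*,D}-u_{h,b}^{*,D},\,v_h\bigr)=0,\qquad \forall v_h\in V_h^\delta(D),
\]
from which the Céa-type minimization $|u_{ab,\delta}^{*,D}-u_{h,b}^{*,D}|_{\delta,D}^{sym}\le|u_{ab,\delta}^{*,D}-I_h u_0|_{\delta,D}^{sym}$ follows by taking $v_h=u_{h,b}^{*,D}-I_h u_0$. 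Splitting this best-approximation term as in \eqref{errorseperateball} and \eqref{errorseperateN*} into three pieces involving $u_{ab,\delta}^{*,D}-u_{\delta,\mathrm{sym}}^{*,D}$, $u_0-u_{\delta,\mathrm{sym}}^{*,D}$, and $u_0-I_h u_0$, and bounding them respectively by the previous step, by \eqref{eqn599new}, and by the standard inverse-type estimate $|u_0-I_hu_0|_{\delta,D}^{sym}\le|u_0-I_hu_0|_{\delta,D}\le C\delta^{-1}\|u_0-I_h u_0\|_{L^2(D)}\le C\delta^{-1}h^2$ from Lemma~\ref{poincaresysp}, I obtain
\[
\|u_{ab,\delta}^{*,D}-u_{h,b}^{*,D}\|_{L^2(\Omega)}^2\le C\bigl((h^2+\delta^{-1.5}h^2)^2+(\delta^2+\delta^{-1}n_\delta^{-\lambda})^2+\delta^{-2}h^4\bigr).
\]
Applying the triangle inequality and taking square roots produces the claimed rate.

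The main obstacle is the second paragraph: controlling $\|v\|_{L^2(D\setminus\Omega)}$ in a way that does not spoil the $\delta^{-1.5}h^2$ rate. This is exactly why Corollary~\ref{lastproofremark}, rather than a trivial bound $\|v\|_{L^2(D\setminus\Omega)}\le C\|v\|_{L^2(D)}$, is indispensable — it is the symmetrized version of Lemma~\ref{middlelemma} adapted to the polygonal kernel and requires the containment $B_{\delta/2}(\mathbf{x})\cap D\subset B_{\delta,\mathrm{pol}}(\mathbf{x})\cap D$ (hence the hypothesis $\delta\ge 2h$). Once this tool is in place, the rest of the argument is a clean parallel to Theorems~\ref{finalresultwithapproximateball} and~\ref{thmfemerrorN*D}.
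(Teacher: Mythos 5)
Your proposal is correct and follows essentially the same route as the paper: the same symmetric-kernel auxiliary problem on $D$, the same use of Corollary~\ref{lastproofremark} to make the boundary-layer estimate of Lemma~\ref{middlelemma} available for $\gamma_{\delta,\mathrm{sym}}$, the same absorption argument leading to $\|u_{\delta,\mathrm{sym}}^{*,D}-u_{ab,\delta}^{*,D}\|_{L^2(\Omega)}^2 + A_{D\setminus\Omega,D}^{sym}\lesssim (h^2+\delta^{-1.5}h^2)^2$, and the same Céa-type three-way split of the best-approximation error using \eqref{eqn599new} and the interpolation/inverse estimate. The only difference is cosmetic: you spell out the residual terms and the Galerkin orthogonality explicitly where the paper invokes ``totally the same as \eqref{estimationneedalotstr}'', and you correctly record the first auxiliary bound as $C(h^2+\delta^{-1.5}h^2)$ rather than the paper's apparent typo $C\delta^2+C\delta^{-1.5}h^2$ in \eqref{eqn580new}.
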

\begin{proof}
Again, we introduce an auxiliary problem:
    $\forall v\in S^\prime_{\delta,sym}(D)$, finding $u_{ab,\delta}^{*,D} \in S^\prime_{\delta,sym}(D)$ such that:
    \begin{align}
        \frac{1}{2}&\int_{D}\int_{D} (v(\mathbf{x})-v(\mathbf{y}))(u_{ab,\delta}^{*,D}(\mathbf{x})-u_{ab,\delta}^{*,D}(\mathbf{y}))\gamma_{\delta,sym}(\mathbf{x},\mathbf{y})d\mathbf{y}d\mathbf{x}\nonumber\\&=\int_\Omega v(\mathbf{x})(f_h(\mathbf{x})+df_{h,sym}^{out,p})d\mathbf{x}-\int_{D/\Omega} v(\mathbf{x})\int_{D}(\widetilde{u_{0,h}}(\mathbf{x})-\widetilde{u_{0,h}}(\mathbf{y}))\gamma_{\delta,sym}(\mathbf{x},\mathbf{y})d\mathbf{y}d\mathbf{x},
    \end{align}
    where $u_{ab,\delta}^{*,D}$ also satisfies a mean value of 0 in $\Omega$.
    Because of Corollary \ref{lastproofremark}, Lemma \ref{middlelemma} is still holds. Totally the same with the proof of estimation (\ref{estimationneedalotstr}), it holds that
    \begin{align}\label{eqn580new}
        \|u_{\delta}^*-u_{ab,\delta}^*\|_{L^2(\Omega)}\leq C\delta^2+C\delta^{-1.5}h^2.
    \end{align}
    With the right-hand side of the nonlocal Poincar\'e inequality in Lemma \ref{ballpoincare}, (\ref{eqn580new}) and (\ref{eqn599new}), we have
    \begin{align}\label{errorseperateN*ball}
        \|u_{ab,\delta}^{*,D}&-u_{h,\delta}^{*,D}\|^2_{L^2(\Omega)}\leq (|u_{ab,\delta}^{*,D}-u_{h,b}^{*,D}|^{sym}_{\delta,D})^2\leq (|u_{ab,\delta}^{*,D}-I_hu_0|^{sym}_{\delta,D})^2\nonumber\\&
        \leq (|u_{ab,\delta}^{*,D}-u_{\delta,sym}^{*,D}|^{sym}_{\delta,D})^2 +(|u_0-u_{\delta,sym}^{*,D}|^{sym}_{\delta,D} )^2+(|u_0-I_hu_0|^{sym}_{\delta,D})^2\nonumber\\&
        \leq C(h^2+\delta^{-1.5}h^2)^2+C(\delta^2+\delta^{-1}n_\delta^{-\lambda})^2+C\delta^{-2}\|u_0-I_hu_0\|_{L^2(D)}^2\nonumber\\&\leq C((h^2+\delta^{-1.5}h^2)^2+(\delta^2+\delta^{-1}n_\delta^{-\lambda})^2+\delta^{-2}h^4)\leq C(\delta^{-2}n_\delta^{-2\lambda}+\delta^4+\delta^{-3}h^4),
    \end{align}
    where $I_hu_0$ is the Lagrangian type interpolation of $u_0$ on $\mathcal{T}_\delta^h(D)$. That implies
    \begin{align}
        \|u_{\delta,sym}^{*,D}-u_{h,b}^{*,D}\|_{L^2(\Omega)}&\leq \|u_{\delta,sym}^{*,D}-u_{ab,\delta}^{*,D}\|_{L^2(\Omega)}+\|u_{ab,\delta}^{*,D}-u_{h,\delta}^{*,D}\|_{L^2(\Omega)}\nonumber\\&
        \leq C(\delta^{-1}n_\delta^{-\lambda}+\delta^2+\delta^{-1.5}h^2).
    \end{align}
    
    The proof is completed.
\end{proof}

Together with Theorem $\ref{thm59regionD}$, we have the following corollary.
\begin{corollary}[Asymptotic compatibility for the outer Neumann problem considering the approximate ball] \label{finalresultwithapproximateballN*col}
Assume $u_0$ is the solution of the local problem and $u_{h,\delta}^{*,D}$ is the finite element solution of outer Neumann problem in (\ref{nonlocalcpouterfemball}), if the conditions of Lemma \ref{Luerrorsym} hold, it has:
    \begin{equation}
        \|u_0-u_{h,b}^{*,D}\|_{L^2(\Omega)}= \mathcal{O}(\delta^{-1}n_\delta^{-\lambda})+\mathcal{O}(\delta^2)+\mathcal{O}(\delta^{-1.5}h^2).
    \end{equation}
If taking  $n_\delta=\mathcal{O}(\delta/h)$, it holds
    \begin{equation}
        \|u_0-u_{h,b}^{*,D}\|_{L^2(\Omega)}= \mathcal{O}(\delta^{-1-\lambda}h^\lambda)+\mathcal{O}(\delta^2)+\mathcal{O}(\delta^{-1.5}h^2).
    \end{equation}
\end{corollary}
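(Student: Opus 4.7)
The plan is to derive this corollary as a direct consequence of the triangle inequality combined with the two preceding results: the model-error bound \eqref{eqn599new} from the theorem preceding the variational formulation (\ref{nonlocalcpouterballvar}), and the discretization bound established in Theorem~\ref{finalresultwithapproximateballN*}. In other words, no new analysis is required; the corollary is purely a bookkeeping step that bundles the model error (nonlocal-to-local) together with the finite element error (nonlocal-to-discrete) into a single estimate against the local solution $u_0$.

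Concretely, I would first write
\begin{equation*}
\|u_0 - u_{h,b}^{*,D}\|_{L^2(\Omega)} \;\le\; \|u_0 - u_{\delta,\mathrm{sym}}^{*,D}\|_{L^2(\Omega)} \;+\; \|u_{\delta,\mathrm{sym}}^{*,D} - u_{h,b}^{*,D}\|_{L^2(\Omega)} .
\end{equation*}
The first term on the right is controlled by \eqref{eqn599new}, which gives $\mathcal{O}(\delta^{-1} n_\delta^{-\lambda}) + \mathcal{O}(\delta^2)$. The second term is controlled by Theorem~\ref{finalresultwithapproximateballN*}, which gives $\mathcal{O}(\delta^{-1} n_\delta^{-\lambda}) + \mathcal{O}(\delta^2) + \mathcal{O}(\delta^{-1.5} h^2)$. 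Adding and absorbing the duplicated terms into the same $\mathcal{O}$ symbols yields the first displayed estimate of the corollary.

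For the second displayed estimate, I would simply substitute the calibration $n_\delta = \mathcal{O}(\delta/h)$ into the first term: this converts $\mathcal{O}(\delta^{-1} n_\delta^{-\lambda})$ into $\mathcal{O}\!\bigl(\delta^{-1}(\delta/h)^{-\lambda}\bigr) = \mathcal{O}(\delta^{-1-\lambda} h^{\lambda})$, while the other two contributions remain unchanged. No further manipulation is needed.

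There is no genuine obstacle in this step, since all hard estimates have already been absorbed into Theorem~\ref{finalresultwithapproximateballN*} (whose proof relied on the auxiliary problem, the polygonal Poincaré inequality from Lemma~\ref{poincaresysp}, the boundary-to-interior transfer lemma in its symmetrized form via Corollary~\ref{lastproofremark}, and the interpolation error in the $|\cdot|^{sym}_{\delta,D}$ seminorm) and into the model-error theorem giving \eqref{eqn599new} (which relied on Lemma~\ref{Luerrorsym} for the consistency $-\Delta u_0 - \L_{\delta,\mathrm{sym}}\widetilde{u_0} = \mathcal{O}(\delta^{-1} n_\delta^{-\lambda}) + \mathcal{O}(\delta^2)$ together with the compatibility correction $df_{\mathrm{sym}}^{out,p}$). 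The only thing to be careful about is making explicit that the constants in both invocations are independent of $\delta$, $h$, and $n_\delta$ in the admissible regime (in particular $\delta \ge 2h$, required by Corollary~\ref{lastproofremark}), so that the triangle-inequality combination is uniform; this is already the case by inspection of the cited results.
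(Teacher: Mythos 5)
Your proposal is correct and follows essentially the same route the paper takes: a single triangle inequality $\|u_0-u_{h,b}^{*,D}\|\le\|u_0-u_{\delta,\mathrm{sym}}^{*,D}\|+\|u_{\delta,\mathrm{sym}}^{*,D}-u_{h,b}^{*,D}\|$, bounding the first term by the model-error estimate~\eqref{eqn599new} and the second by Theorem~\ref{finalresultwithapproximateballN*}, and then substituting $n_\delta=\mathcal{O}(\delta/h)$. One small bookkeeping note: you correctly invoke \eqref{eqn599new} as the model-error bound, whereas the paper's lead-in sentence cites Theorem~\ref{thm59regionD} (the non-symmetrized problem) — this appears to be a citation slip in the paper, since the intermediary must be $u_{\delta,\mathrm{sym}}^{*,D}$, and your choice is the one that actually closes the chain.
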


\section{Numerical experiment}
We now verify our theoretical results by providing some numerical examples. It follows from Eqs. (\ref{errorseperateball}) and (\ref{errorseperateN*ball}) that the error of the solution 
is mainly composed of three parts: (i) the model error between (\ref{problemNbigD}), (\ref{nonlocalcpinner}) and (\ref{local}), (ii) the error brought by the approximate ball and; (iii) the error arosn from the finite element method.  The first two numerical examples are  to investigate AC convergence order of the \emph{Nocaps} approximate ball strategy for the inner Neumann problem (\ref{nonlocalcpinner}) and the outer Neumann problem (\ref{nonlocalcp}) by choosing $u_0$ as a polynomial. In Section \ref{sectiont61}, other two examples are provided to investigate the finite element error and model error by using  the high-accuracy \emph{Approxcaps} strategy for the inner Neumann problem and the outer Neumann problem, respectively. 


\subsection{Numerical validation of the errors considering the approximate ball }\label{sec61}
Take $\Omega=(0,1)\times(0,1)$, the local solution  $u_0(x,y)=1.5x^2y-2xy+0.25$, and the kernel  $\gamma_\delta$ as 
\begin{equation}\label{constantkernel}
    \gamma_\delta(\mathbf{x},\mathbf{y})=\left\{
    \begin{aligned}
        &\frac{4}{\pi\delta^4}, \quad\quad &&|\mathbf{x}-\mathbf{y}|\leq\delta,\\
        &0, &&|\mathbf{x}-\mathbf{y}|>\delta.
    \end{aligned}
    \right.
\end{equation}
The auxiliary solutions $\widetilde{u}_0(x,y)=1.5x^2y-2xy+0.25$, 
 the Nocaps strategy is used to approximate the $\delta$-neighborhood. In order to maintain a more stable grid density, we use Delaunay triangulation method to generate a consistent mesh by Gmsh.
 
 
 
\begin{example}\label{exp64}
Here we  investigate the $L^2$-asymptotic compatible convergence order of the Nocaps  finite element strategy \eqref{nonlocalcpinnerfemball} for the nonlocal inner Neumann problem \eqref{nonlocalcpinner}.
To this end, the $\delta$-convergence is tested by fixing $m=\delta/h$ as a constant and refining $\delta$. In this situation with the parameter $\lambda=2$, Corollary \ref{finalresultwithapproximateballcol} shows the asymptotic compatible error estimate as 
    \begin{equation}
        \|u_0-u_{h,\delta}^-\|_{L^2(\Omega)}= \mathcal{O}(\delta^{-3}h^2)= \mathcal{O}(\delta^{-1}m^{-2}).
    \end{equation}
Table \ref{Tabel7} shows the errors in $L^2$-norm and the convergence orders for different $m$. On the one hand, when $h$ is fixed, the error decreases in a second-order rate as $m$ increases. On the other hand, when we fix $m$, the error has no obvious change as $h$ is refined. These phenomena indicate that the numerical convergence order seems to be 
 \begin{equation}\label{rightorderball}
     \|u_\delta^--u_{h,b}^-\|_{L^2(\Omega)}= O(m^{-2})+\mathcal{O}(\delta^2)+O(h^2)=\mathcal{O}(\delta^{-2}h^2)+\mathcal{O}(\delta^2),
 \end{equation}
 which is aligned with the results in \cite{du2024errorestimatesfiniteelement}. 

We further take $h=C\delta^\beta$ and change $\delta$. For $0<\beta<1$, we set the largest horizon $\delta_0$ and set $h_0=\delta/6$,  and compute the test pairs $\{\delta_i\}_{i=0}^4$ and $\{h_i\}_{i=0}^4$ by 
    \begin{equation}\label{testpairchoose}
        (\frac{\delta_0}{\delta_i})^\beta=\frac{(6-i)\delta_0}{6\delta_i}.
    \end{equation}
For $\beta>1$, we take  $h_{i}=1/(25(i+1))$ and set $\delta_0=0.08$. The relation $h=C\delta^\beta$ is then used to calculate the corresponding $\{\delta_i\}$. The Corollary 
\ref{finalresultwithapproximateballcol} shows that the theoretical convergence order is of  $2\beta-3$. However, Table \ref{Tabel8} indicates the convergence order in $L^2$-norm is close to be $2\beta-2$ for different $\delta$. Such results are aligned with Table 5 and Figure 9(b) in \cite{du2024errorestimatesfiniteelement} for nonlocal Dirichlet problem. This implies that the estimation in Corollary \ref{finalresultwithapproximateballcol} 
    might be overestimated and the numerical results shows that the estimate (\ref{rightorderball}) is a sharper order.
    

 We point out that the convergence order presented in Table \ref{Tabel8} oscillates near $2\beta-2$. This oscillation may arise from variations in the ratio between $n_\delta$ and $d/h$ due to mesh partitioning. The reason is that, noting the huge computational costs for nonlocal model, we cannot reduce $h$ and $\delta$ sufficiently small to achieve higher precision in capturing the convergence order. Nevertheless, the overall experimental results substantially align with the convergence order of $2\beta-2$. For the remainder results in the following examples, a similar phenomenon will be observed too.
\end{example}
\begin{table}[htbp]
\centering
\caption{Errors in $L^2$ norm and corresponding convergence rates for problem (\ref{problemsymN-}) with Nocaps strategy and $\delta=mh$}
\begin{tabular}{c|cc|cc|cc|cc|cc}
\toprule
 & \multicolumn{2}{c|}{$h$ =1/ 32} & \multicolumn{2}{c|}{$h$ =1/ 64} & 
 \multicolumn{2}{c|}{$h$ =1/ 96} & 
 \multicolumn{2}{c|}{$h$ =1/ 128} & 
 \multicolumn{2}{c}{$h$ =1/ 160} \\
\midrule
$m$ & Error & Order & Error & Order & Error & Order & Error & Order& Error & Order \\
\midrule
3 & 6.774e-4 & -- &6.918e-4&--&	7.101e-4&--&	7.173e-4 &--& 6.989e-4 &--\\
4 & 3.756e-4& -2.050	&4.057e-4& -1.855	&4.239e-4&-1.793& 4.083e-4&-1.959 & 4.139e-4 &-1.821\\ 
        
5 &2.820e-4& -1.285	&2.819e-4	&	-1.631&2.750e-4&	-1.940 &2.757e-4&	-1.761 &  2.776e-4&	-1.790  \\ 
6 &1.682e-4& -2.833	&1.867e-4&	-2.260	& 1.874e-4&-2.102 & 1.902e-4&	-2.037& 1.814e-4 &	-2.335  \\ 
\bottomrule
\end{tabular}
\label{Tabel7}
\end{table}
\begin{table}[htbp]
\centering
\caption{Errors in $L^2$ norm and corresponding convergence rates for problem (\ref{problemsymN-}) with Nocaps strategy and  $h=C\delta^\beta$}
\resizebox{\textwidth}{!}{
\begin{tabular}{c|cc|c|cc|c|cc|c|cc}
\toprule
 & \multicolumn{2}{c|}{$\beta$=0.3} & &\multicolumn{2}{c|}{$\beta$=0.5} & &
 \multicolumn{2}{c|}{$\beta$=1.4} & &
 \multicolumn{2}{c}{$\beta$=1.7}  \\
\midrule
$\delta$ & Error & Order& $\delta$& Error & Order&$\delta$ & Error & Order&$\delta$ & Error & Order \\
\midrule
0.125 &1.685e-4 & -- &0.125&1.685e-4&--&0.08&	1.580e-3&--&0.08	&1.580e-3 &-- \\
        
0.096 &2.918e-4& -2.108&0.087	&2.640e-4	&-1.232&0.049&9.240e-4&	1.083&0.053&8.640e-4&	1.480   \\ 
0.070 &3.410e-4& -0.489	&0.056&4.075e-4&	-0.973&0.037	& 8.321e-4&0.362 &0.042 &6.124e-4&	1.443  \\ 
0.046 &6.877e-4& -1.707	&0.031&7.101e-4&	-0.965&0.030	& 7.004e-4& 0.838&0.035 & 5.159e-4&	1.014  \\ 
0.026 &1.415e-3& -1.246&0.014	&1.397e-3&	-0.835&0.025	& 5.660e-4& 1.337& 0.031& 4.102e-4&	1.747  \\ 
\bottomrule
\end{tabular}
}
\label{Tabel8}
\end{table}

\begin{example}\label{exp65}

We use the same setup and conditions as in Example \ref{exp64} to investigate the $L^2$-asymptotically  compatible convergence order of the Nocaps finite element strategy \eqref{nonlocalcpouterfemball} for the nonlocal outer Neumann problem \eqref{problemNbigD}. The outer region $D$ in (\ref{problemNbigD}) is taken as $(-\delta,1+\delta)\times(-\delta,1+\delta)$. The $\delta$-convergence is  tested by fixing $m=\delta/h$ and refining $\delta$. Thus, from Corollary \ref{finalresultwithapproximateballN*col}, the error estimate turns out to be
    \begin{equation}
        \|u_0-u_{h,\delta}^*\|_{L^2(\Omega)}= \mathcal{O}(\delta^{-3}h^{2})=\mathcal{O}(\delta^{-1}m^{-2}).
    \end{equation}
   Table \ref{Tabel10} provides the errors in $L^2$-norm and the convergence orders for different $m$. For fixed $h$, when $m$ increases, the error decreases in a second-order rate. In the meantime, as $m$ is fixed, the error has no obvious change. These two phenomena are similar to phenomena in Table \ref{Tabel7}. It indicates that Corollary 
   \ref{finalresultwithapproximateballN*col} may be overestimated, and the numerical convergence order seems to be:
   \begin{equation}\label{example52newres}
   	\|u_0-u_{h,\delta}^*\|_{L^2(\Omega)}= \mathcal{O}(\delta^{-2}h^2)+\mathcal{O}(\delta^2).
   \end{equation} 
    
    We further take $h=C\delta^\beta$ and change $\delta$. The selection of specific $\delta-h$ pairs is consistent with Example \ref{exp64}. The Corollary 
    \ref{finalresultwithapproximateballN*col} shows that the theoretical convergence rate is of $2\beta-3$. Table \ref{Tabel9} indicates the convergence order in $L^2$-norm  is close to be  $2\beta-2$ for different $\delta$, which is similar with the phenomenon in Table \ref{Tabel8}. This further implies that the estimation in Corollary 
    \ref{finalresultwithapproximateballN*col}  may be overestimated, and (\ref{example52newres}) might be right.
\end{example}

\begin{table}[htbp]
\centering
\caption{Errors in $L^2$ norm and corresponding convergence rates for problem (\ref{problemsymN*D}) by Nocaps strategy with fixing $\delta=mh$}
\begin{tabular}{c|cc|cc|cc|cc|cc}
\toprule
 & \multicolumn{2}{c|}{$h$ =1/ 32} & \multicolumn{2}{c|}{$h$ =1/ 64} & 
 \multicolumn{2}{c|}{$h$ =1/ 96} & 
 \multicolumn{2}{c|}{$h$ =1/ 128} & 
 \multicolumn{2}{c}{$h$ =1/ 160} \\
\midrule
m & Error & Order & Error & Order & Error & Order & Error & Order& Error & Order \\
\midrule
3 &5.598e-4 & -- &	6.332e-4&--&	6.674e-4&--&	6.845e-4 &--& 6.740e-4 &--\\
4 & 2.997e-4& -2.172	&3.614e-4& -1.950	&3.923e-4&-1.847& 3.845e-4&-2.005 & 3.941e-4 &-1.866\\ 
        
5 &2.050e-4& -1.702	&2.426e-4	&	-1.786&2.487e-4&	-2.042 &2.554e-4&	-1.834 &  2.611e-4&	-1.845  \\ 
6 &1.237e-4& -2.771	&1.565e-4&	-2.405	& 1.661e-4&-2.215 & 1.740e-4&	-2.105& 1.687e-4 &	-2.394  \\ 
\bottomrule
\end{tabular}
\label{Tabel10}
\end{table}

\begin{table}[htbp]
\centering
\caption{Errors in $L^2$ norm and corresponding convergence rates for finite element solutions of problem (\ref{problemsymN-}) by Nocaps strategy with taking $h=C\delta^\beta$}
\resizebox{\textwidth}{!}{
\begin{tabular}{c|cc|c|cc|c|cc|c|cc}
\toprule
 & \multicolumn{2}{c|}{$\beta$=0.3} & &\multicolumn{2}{c|}{$\beta$=0.5} & &
 \multicolumn{2}{c|}{$\beta$=1.4} & &
 \multicolumn{2}{c}{$\beta$=1.7}  \\
\midrule
$\delta$ & Error & Order& $\delta$& Error & Order&$\delta$ & Error & Order&$\delta$ & Error & Order \\
\midrule
0.125 &1.343e-4 & -- &0.125&1.343e-4&--&0.08&	1.361e-3&--&0.08	&1.361e-3 &-- \\
        
0.096 &2.401e-4& -2.230&0.087	&2.231e-4	&-1.392&0.049&8.410e-4&	0.972&0.053&7.750e-4&	1.381   \\ 
0.070 &3.011e-4& -0.709	&0.056&3.682e-4&	-1.123&0.037	& 7.730e-4&0.291 &0.042 &5.598e-4&	1.364  \\ 
0.046 &6.288e-4& -1.792	&0.031&6.674e-4&	-1.033&0.030	& 6.585e-4& 0.780&0.035 & 4.809e-4&	0.898  \\ 
0.026 &1.360e-3& -1.332&0.014	&1.366e-3&	-0.883&0.025	& 5.352e-4& 1.301& 0.031& 3.875e-4&	1.645  \\ 
\bottomrule
\end{tabular}
}
\label{Tabel9}
\end{table}

\subsection{Numerical validation of model errors and finite element errors}\label{sectiont61}
Take $\Omega=(0,1)\times(0,1)$, the local solution $u_0(x,y)=x^2y+y^2-0.5$ and the kernel $\gamma_\delta$ as a constant kernel defined in (\ref{constantkernel}). The auxiliary solution $\widetilde{u_0}(x,y)=x^2y+y^2-0.5$.  $(x,y)\in \Omega^+$, the Approxcaps strategy in \cite{Cookbook} is used to approximate the $\delta$-neighborhood. Although the Approxcaps strategy still satisfies the estimation in Corollary \ref{finalresultwithapproximateballcol} and \ref{finalresultwithapproximateballN*col}, we can refine the caps  (here in our test, we refined $O_r>-\log_2h$ times) to reduce the effect of the approximate ball. This strategy is easier to implement programmatically and exhibits high numerical stability.

\begin{example}\label{exp61}
We now investigate the $L^2$ asymptotically compatible convergence order of the finite element strategy (\ref{nonlocalcpinnerfem}) for the nonlocal inner Neumann problem (\ref{nonlocalcpinner}). To this end, the $\delta$-convergence is tested by fixing $m=\delta/h$ as a constant and refine $h$. In this situation, when the approximate ball error is sufficiently small, Corollary \ref{corollary41} shows the asymptotic compatible error estimate as:
    \begin{equation}
        \|u_0-u_{h,\delta}^-\|_{L^2(\Omega)}= \mathcal{O}(\delta^{-1.5}h^2).
    \end{equation}
     Table \ref{Tabel1} shows the errors in $L^2$-norm and the convergence orders for different $h$. On the one hand, for fixed $m$, when $h$ becomes smaller, the asymptotic compatible error converges at a second-order rate. On the other hand, for fixed $h$, the errors have no obvious change as $m$ increases. Such two phenomena indicate that the result in Corollary \ref{corollary41} may be overestimated and the error estimation in $L^2$-norm might be
     \begin{equation}\label{wrongmodified result}
        \|u_0-u_{h,\delta}^-\|_{L^2(\Omega)}= O(h^2),
    \end{equation}
    when $u_0$ is a polynomial, which is aligned with Table $3$ in \cite{du2024errorestimatesfiniteelement}.
     
    Next, we take $h=C\delta^\beta$ and change $\delta$. Similar to the choice of $\delta-h$ pairs in Example \ref{exp64} and \ref{exp65}, when $0<\beta<1$, we choose the largest $\delta_0$ and the corresponding $h_0=\delta_0/6$. Then, we use (\ref{testpairchoose}) to calculate the next test pairs. 
    For $\beta>1$, as $\delta$ get smaller, the corresponding $h$ will get smaller much quicker which will cause a lot difficulties in computation. To reduce the difficulty of validation, after choosing $\delta_0$ and $h_0=\delta/2$, we take $h_i=1/(1/h_0+10i)$  and compute the corresponding $\delta$ from the condition $h=C\delta^\beta(\beta>1)$.  The Corollary \ref{corollary41} shows the error in $L^2$-norm as:
    \begin{equation}
        \|u_0-u_{h,\delta}^-\|_{L^2(\Omega)}=\mathcal{O}(\delta^{-1.5}h^2)=\mathcal{O}(\delta^{2\beta-1.5}).
    \end{equation}
    In Table \ref{Tabel3}, the errors show a convergence order close to $2\beta$. In other words, this indicates once again that the result in Corollary \ref{corollary41} may be overestimated, and (\ref{wrongmodified result}) might be  the sharper order.
    
\end{example}

\begin{table}[htbp]
\centering
\caption{Errors in $L^2$ norm and corresponding convergence rates for finite element solutions of problem (\ref{nonlocalcpinner}) using approxcaps strategy (fixing $\delta=mh$)}
\begin{tabular}{c|cc|cc|cc|cc}
\toprule
 & \multicolumn{2}{c|}{m=2} & \multicolumn{2}{c|}{m=3} & 
 \multicolumn{2}{c|}{m=4} & 
 \multicolumn{2}{c}{m=5}  \\
\midrule
h & Error & Order & Error & Order & Error & Order & Error & Order \\
\midrule
1/24 &1.592e-4 & -- &	1.431e-4&--&	1.528e-4&--&	1.478e-4 &-- \\
1/36 & 7.083e-5& 1.997	&6.153e-5& 2.081	&7.116e-5&1.886& 6.884e-5&1.884 \\ 
        
1/48 &4.278e-5& 1.753	&3.599e-5	&	1.865&4.004e-5&	1.999 &3.903e-5&	1.973   \\ 
1/60 &2.676e-5& 2.103	&2.320e-5&	1.967	& 2.530e-5&2.057 & 2.401e-5&	2.177  \\ 
1/72 &1.854e-5& 2.011	&1.624e-5&	1.955	& 1.816e-5& 1.818 & 1.705e-5&	1.880  \\ 
\bottomrule
\end{tabular}
\label{Tabel1}
\end{table}

\begin{table}[htbp]
\centering
\caption{Errors in $L^2$ norm and corresponding convergence rates for finite element solutions of problem (\ref{nonlocalcpinner}) using approxcaps strategy (taking $h=C\delta^\beta$)}
\resizebox{\textwidth}{!}{
\begin{tabular}{c|cc|c|cc|c|cc|c|cc}
\toprule
 & \multicolumn{2}{c|}{$\beta$=0.5} & &\multicolumn{2}{c|}{$\beta$=0.7} & &
 \multicolumn{2}{c|}{$\beta$=1.3} & &
 \multicolumn{2}{c}{$\beta$=1.8}  \\
\midrule
$\delta$ & Error & Order& $\delta$& Error & Order&$\delta$ & Error & Order&$\delta$ & Error & Order \\
\midrule
0.25 &1.493e-4 & -- &0.4&3.872e-4&--&0.4&	4.029e-3&--&0.4	&4.029e-3 &-- \\
        
0.174 &1.088e-4& 0.866&0.2	&1.357e-4	&1.513&0.172&4.113e-4&	2.700&0.217&3.746e-4&	3.893   \\ 
0.111 &7.116e-5& 0.952	&0.1&5.382e-5&	1.334&0.116	& 1.508e-4&2.554 &0.164 &1.327e-4&	3.656  \\ 
0.046 &3.599e-5& 1.185	&0.042&1.624e-5&	1.368&0.090	& 6.652e-5& 3.157&0.136 & 6.987e-5&	3.430  \\ 
0.026 &1.854e-5& 0.818&0.010	&2.922e-6&	1.237&0.074	& 4.518e-5& 2.006& 0.118& 4.653e-5&	2.909  \\ 
\bottomrule
\end{tabular}
}
\label{Tabel3}
\end{table}

\begin{example}\label{exp62}

Here we investigate the $L^2$ asymptotic compatible convergence order of the finite element method (\ref{nonlocalcpouterfem}) for the nonlocal outer Neumann problem (\ref{nonlocalcp}), with the same experiment setup and conditions as in Example \ref{exp61}. The region $D$ in (\ref{problemsymN*D}) is chosen as $(-\delta,1+\delta)\times(-\delta,1+\delta)$. From Corollary \ref{corollary43}, if we choose $u_0(x,y)=x^2y+y^2-0.5$ and the approximate ball error is sufficiently small, the asymptotic compatible error is expected that:
    \begin{equation}
        \|u_0-u_{h,\delta}^*\|_{L^2(\Omega)}= O(\delta^{-1.5}h^2).
    \end{equation}
Table \ref{Tabel2} provides the errors in $L^2$-norm and the convergence orders for different $h$. Similar to the phenomenon in Table \ref{Tabel1}, when $h$ becomes smaller, the errors converge at a second-order rate for fixed $m$. For fixed $h$, the errors have no obvious change as $m$ increases.
This two phenomena indicates that Corollary \ref{corollary43} may be overestimated. When $u_0$ is a polynomial, the error estimation in $L^2$-norm might improve as:
     \begin{equation}\label{lastexamplefinal}
        \|u_0-u_{h,\delta}^*\|_{L^2(\Omega)}= O(h^2).
    \end{equation} 
 We further take $h=C\delta^\beta$ and change $\delta$. The $\delta-h$ pairs are selected aligned  with Example \ref{exp61}. By Corollary \ref{corollary43}, the theoretical convergence order is of $2\beta-1.5$. In Table \ref{Tabel4} the convergence orders appear to be $2\beta$ which is similar to Table \ref{Tabel3}. This further indicates Corollary \ref{corollary43} may be overestimated and (\ref{lastexamplefinal})might be right.
\end{example}

\begin{table}[htbp]
\centering
\caption{Errors in $L^2$ norm and corresponding convergence rates for finite element solutions of problem (\ref{nonlocalcpinner}) using approxcaps strategy (fixing $\delta=mh$)}
\begin{tabular}{c|cc|cc|cc|cc}
\toprule
 & \multicolumn{2}{c|}{m=2} & \multicolumn{2}{c|}{m=3} & 
 \multicolumn{2}{c|}{m=4} & 
 \multicolumn{2}{c}{m=5}  \\
\midrule
h & Error & Order & Error & Order & Error & Order & Error & Order \\
\midrule
1/24 &1.516e-4 & -- &	1.441e-4&--&	1.501e-4&--&	1.454e-4 &-- \\
1/36 & 6.524e-5& 2.079	&6.057e-5& 2.138	&6.254e-5&2.159& 6.764e-5&1.888 \\ 
        
1/48 &3.832e-5& 1.850	&3.689e-5	&	1.724&3.808e-5&	1.724 &3.846e-5&	1.963   \\ 
1/60 &2.396e-5& 2.105	&2.211e-5&	2.294	& 2.338e-5&2.187 & 2.396e-5&	2.121  \\ 
1/72 &1.655e-5& 2.028	&1.554e-5&	1.935	& 1.637e-5& 1.955 & 1.643e-5&	2.068  \\ 
\bottomrule
\end{tabular}
\label{Tabel2}
\end{table}

\begin{table}[htbp]
\centering
\caption{Errors in $L^2$ norm and corresponding convergence rates for finite element solutions of problem (\ref{nonlocalcp}) using approxcaps strategy (taking $h=C\delta^\beta$)}
\resizebox{\textwidth}{!}{
\begin{tabular}{c|cc|c|cc|c|cc|c|cc}
\toprule
 & \multicolumn{2}{c|}{$\beta$=0.5} & &\multicolumn{2}{c|}{$\beta$=0.7} & &
 \multicolumn{2}{c|}{$\beta$=1.3} & &
 \multicolumn{2}{c}{$\beta$=1.8}  \\
\midrule
$\delta$ & Error & Order& $\delta$& Error & Order&$\delta$ & Error & Order&$\delta$ & Error & Order \\
\midrule
0.25 &1.480e-4 & -- &0.4&3.791e-4&--&0.4&	3.665e-3&--&0.4	&3.665e-3 &-- \\
        
0.174 &1.056e-4& 0.925&0.2	&1.342e-4	&1.498&0.172&3.889e-4&	2.654&0.217&3.375e-4&	3.909   \\ 
0.111 &6.254e-5& 1.174	&0.1&5.177e-5&	1.374&0.116	& 1.290e-4&2.810 &0.164 &1.265e-4&	3.459  \\ 
0.046 &3.689e-5& 0.917	&0.042&1.554e-5&	1.375&0.090	& 6.394e-5& 2.709&0.136 & 6.677e-5&	3.415  \\ 
0.026 &1.655e-5& 0.988&0.010	&2.377e-6&	1.354&0.074	& 4.168e-5& 2.214& 0.118& 4.035-5&	3.604  \\ 
\bottomrule
\end{tabular}
}
\label{Tabel4}
\end{table}

\section{Conclusion}
A systematic analysis of nonlocal Neumann problems governed by nonlocal Poisson equations is presented. First, a family of nonlocal Neumann operators are derived directly from nonlocal Green’s identities. By means of Taylor expansions and a localized geometric treatment of boundary layers, we establish first-order weak convergence of these operators to the classical normal derivative under weaker regularity requirements than previously assumed. Subsequently, we formulate interior and exterior nonlocal Neumann problems, prove their second-order asymptotic compatibility with the corresponding local limit, and thereby quantify the modeling error precisely. Second, the AC convergence analysis is established for the approximate ball-based  finite element method and a complete error estimation is provided that separately accounts for (i) model error, (ii) finite element approximation, and (iii) geometric quadrature induced by the approximate ball strategy. Domain perturbations caused by inexact representation of the outer boundary are additionally incorporated, yielding the first fully discrete asymptotic compatibility result for nonlocal Neumann problems. Numerical experiments corroborate the theoretical convergence rates and illustrate the practical relevance of the analysis.

Future work will address time-dependent and multi-physics extensions, together with the design of fast, robust solvers that reduce the computational burden of nonlocal boundary integrals and enhance the overall efficiency of nonlocal simulations.

\section{Appendix}

\subsection{Lemmas for the existence and uniqueness of intersection point $p$}\label{append2}
We firstly introduce the following variable transformation $V_S:(\mathbf{p},\theta,s,t)\rightarrow (\mathbf{r_1},\mathbf{r_2})$ as 
\begin{equation}\label{variable21}
\begin{aligned}
\mathbf{r_1} = 
\begin{pmatrix}
    x_1\\y_1
\end{pmatrix} =
\mathbf{p} - t
\begin{pmatrix}
    \cos(\theta)&-\sin(\theta)\\\sin(\theta)&\cos(\theta)
\end{pmatrix}
\begin{pmatrix}
    n_{1p}\\n_{2p}
\end{pmatrix}; \;
\mathbf{r_2} = 
\begin{pmatrix}
    x_2\\y_2
\end{pmatrix} =
\mathbf{p} +s
\begin{pmatrix}
    \cos(\theta)&-\sin(\theta)\\\sin(\theta)&\cos(\theta)
\end{pmatrix}
\begin{pmatrix}
    n_{1p}\\n_{2p}
\end{pmatrix}.
\end{aligned}
\end{equation}

 First, we take a segment \( \Gamma \) of length \( l<\infty \) along the \( y \)-axis. Within this segment, we assume that \( \Gamma \) is parameterized by arc length as \( (x(\eta), y(\eta)) \) where \( \eta \in [0, l] \). Consequently, the intersection point \( \mathbf{p} \in \Gamma \) defined in Eq. (\ref{variable2}) can be represented by the parameter \( \eta \) as 
\[
\mathbf{p} = \left(x(\eta), y(\eta)\right)^\top=(0,y(\eta))^\top.
\]  
Then, the outer normal vector at   $\mathbf{p}$ is that
\begin{equation}
    (n_{1p},n_{2p})=(-\frac{y^\prime(\eta)}{\sqrt{x^\prime(\eta)^2+y^\prime(\eta)^2}},\frac{x^\prime(\eta)}{\sqrt{x^\prime(\eta)^2+y^\prime(\eta)^2}})=(1,0).
\end{equation} 
The variable substitution (\ref{variable2}) then can be written as:
\begin{equation}\label{coordtrans}
\left\{
\begin{aligned}
    &x_1=x(\eta)-t(\cos(\theta)n_{1p}-\sin(\theta)n_{2p}),\quad y_1=y(\eta)-t(\sin(\theta)n_{1p}+\cos(\theta)n_{2p}),\\
    &x_2=x(\eta)+s(\cos(\theta)n_{1p}-\sin(\theta)n_{2p}), \quad y_2=y(\eta)+s(\cos(\theta)n_{1p}-\sin(\theta)n_{2p}).
\end{aligned}
\right.
\end{equation}
For simplicity, we use the following notation:
\begin{equation}
\begin{aligned}
        &N_1= \cos(\theta)\frac{\partial n_{1p}}{\partial \eta}-\sin(\theta)\frac{\partial n_{2p}}{\partial \eta},\quad
        N_2=\sin(\theta)n_{1p}+\cos(\theta)n_{2p},\\&
        N_3=\sin(\theta)\frac{\partial n_{1p}}{\partial \eta}+\cos(\theta)\frac{\partial n_{2p}}{\partial \eta},\quad
        N_4=\cos(\theta)n_{1p}-\sin(\theta)n_{2p}.
\end{aligned}
\end{equation}
Then, the Jacobian matrix is given by:
$$
J = \frac{\partial(x_1,y_1,x_2,y_2)}{\partial(\eta,\theta,s,t)}
= \left(\begin{array}{cccc}
x^\prime(\eta)-tN_1 & tN_2& 0 & -N_4 \\
y^\prime(\eta)-tN_3 & -tN_4 & 0 &-N_2\\
x^\prime(\eta)+sN_1 & -sN_2  & N_4& 0  \\
y^\prime(\eta)+sN_3 & sN_4  & N_2 & 0
\end{array}\right). 
$$
It's easy to find that the determinant of the Jacobian matrix:
\begin{align}\label{detJ}
\det(J)&
=(s+t)(x^\prime(\eta),y^\prime(\eta))\left(\begin{array}{cc}
\sin(\theta) & \cos(\theta)\\
-\cos(\theta)& \sin(\theta)
\end{array}\right)\left(\begin{array}{c} n_{1p}\\n_{2p}\end{array}\right)\nonumber\\&
=(s+t)\cos(\theta)\sqrt{(x^\prime(\eta))^2+(y^\prime(\eta))^2}(n_{1p}^2+n_{2p}^2)\nonumber\\&
=(s+t)\cos(\theta)\sqrt{(x^\prime(\eta))^2+(y^\prime(\eta))^2}.
\end{align}
\begin{lemma}\label{convex}
For a convex and bounded region \( \Omega \), if the line segment \( \overline{\mathbf{r_1}\mathbf{r_2}} \) has a finite number of intersection points with the boundary \( \partial\Omega \), then the intersection point \( \mathbf{p} \) under the mapping \( V_n \) is uniquely determined.
\end{lemma}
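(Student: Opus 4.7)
The plan is to parameterize the segment and exploit convexity of $\Omega$ together with the transversality constraint $\theta\in(-\pi/2,\pi/2)$ built into the map $V_S$. Write $\gamma(\tau)=\mathbf{r}_1+\tau(\mathbf{r}_2-\mathbf{r}_1)$ for $\tau\in[0,1]$; since $\mathbf{r}_1\in\Omega_\gamma^-\subset\Omega$ (open) and $\mathbf{r}_2\in\Omega_\gamma^+\subset\Omega^c$, we have $\gamma(0)\in\Omega$ and $\gamma(1)\notin\bar\Omega$. The preimage $A:=\gamma^{-1}(\bar\Omega)$ is closed (as the preimage of a closed set under the continuous map $\gamma$) and convex (as the preimage of a convex set under an affine map), hence a closed subinterval $[0,b]\subset[0,1)$. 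Since $\Omega$ is open, $\gamma(b)$ cannot lie in $\Omega$, for otherwise a small right neighborhood of $b$ would still map into $\Omega\subset\bar\Omega$, contradicting $b=\sup A$; therefore $\gamma(b)\in\partial\Omega$, and I designate $\mathbf{p}:=\gamma(b)$ as the candidate crossing point.

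For uniqueness, the key observation is that formula (\ref{variable21}) forces $\mathbf{r}_2-\mathbf{p}=s R_\theta\mathbf{n}_p$ with $s>0$ and $\theta\in(-\pi/2,\pi/2)$, so $(\mathbf{r}_2-\mathbf{p})\cdot\mathbf{n}_p>0$, i.e., $\mathbf{r}_2$ lies strictly on the outer side of the supporting line to $\Omega$ at $\mathbf{p}$; analogously $(\mathbf{r}_1-\mathbf{p})\cdot\mathbf{n}_p<0$. Thus any admissible $\mathbf{p}$ must be a \emph{transversal exit point} in the sense that the segment switches sides of the supporting line at $\mathbf{p}$. Suppose a second candidate $\mathbf{p}'=\gamma(\tau')\in\partial\Omega$ were admissible. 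If $\tau'>b$, then $\gamma(\tau')\notin\bar\Omega$, contradiction. If $\tau'<b$, then both $\mathbf{r}_1=\gamma(0)$ and $\gamma(b)$ lie in $\bar\Omega$ on the \emph{same} side of $\mathbf{p}'$ along the line (since $[0,b]\ni0,b$ and convexity keeps everything in between in $\bar\Omega$), forcing $(\mathbf{r}_1-\mathbf{p}')\cdot\mathbf{n}_{p'}$ and $(\mathbf{r}_2-\mathbf{p}')\cdot\mathbf{n}_{p'}$ to have the same sign, which violates the opposing-sign requirement extracted from $V_S$.

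The finite-intersection hypothesis is essential to rule out the degenerate case in which a portion of the segment lies along a flat piece of $\partial\Omega$, producing a continuum of intersections and an ill-defined $\mathbf{p}$. The main technical obstacle is handling tangential touches at interior parameters $\tau_0\in(0,b)$, which the finite-intersection hypothesis still permits: at such a touch the segment direction is tangent to $\partial\Omega$, so the angle between $\mathbf{r}_2-\gamma(\tau_0)$ and the outer normal at $\gamma(\tau_0)$ equals $\pm\pi/2$, lying outside the admissible range of $\theta$ and hence excluded as a candidate for $\mathbf{p}$. For non-smooth convex boundaries (e.g.\ polygonal vertices on the segment), the outer normal is not single-valued; the fix is to replace $\mathbf{n}_p$ by the normal of any supporting line at the touch and repeat the sign analysis---convexity guarantees the existence of such supporting lines, so the argument closes uniformly, leaving $\mathbf{p}=\gamma(b)$ as the unique point compatible with $V_S$.
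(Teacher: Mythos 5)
Your proof takes a genuinely different route from the paper's. The paper argues by contradiction using the supporting hyperplane theorem: if two intersection points $\mathbf{p}_1,\mathbf{p}_2$ existed on the segment, the supporting line at $\mathbf{p}_1$ would force one of the subsegments $\overline{\mathbf{r}_1\mathbf{p}_1}$ or $\overline{\mathbf{p}_1\mathbf{p}_2}$ to lie inside $\partial\Omega$, producing infinitely many intersections and contradicting the finiteness hypothesis. You instead parametrize $\gamma(\tau)=\mathbf{r}_1+\tau(\mathbf{r}_2-\mathbf{r}_1)$, show via convexity that $\gamma^{-1}(\bar\Omega)=[0,b]$ with $\gamma(b)\in\partial\Omega$, and exploit the sign constraints forced by (\ref{variable21}), namely $(\mathbf{r}_2-\mathbf{p})\cdot\mathbf{n}_p=s\cos\theta>0$ and $(\mathbf{r}_1-\mathbf{p})\cdot\mathbf{n}_p=-t\cos\theta<0$, to rule out every candidate other than $\gamma(b)$. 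This is a more targeted statement than the paper's: you establish that at most one intersection can be \emph{admissible} for $V_S$, rather than that there is only one geometric intersection; both suffice for the application.

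There is, however, a genuine logical kink in the $\tau'<b$ case. You write that $\mathbf{r}_1=\gamma(0)$ and $\gamma(b)$ lie ``on the same side of $\mathbf{p}'$ along the line''; since $0<\tau'<b$ they lie on \emph{opposite} sides of $\gamma(\tau')$ on the segment, so the claim is false as stated, and the subsequent assertion that $(\mathbf{r}_1-\mathbf{p}')\cdot\mathbf{n}_{p'}$ and $(\mathbf{r}_2-\mathbf{p}')\cdot\mathbf{n}_{p'}$ must therefore have the same sign does not follow from what is written. The inference you need is different: $\gamma(b)$ and $\mathbf{r}_2=\gamma(1)$ lie on the same side of $\mathbf{p}'$ along the line (both have parameter $>\tau'$), so $\gamma(b)-\mathbf{p}'$ and $\mathbf{r}_2-\mathbf{p}'$ are positive multiples of the same vector; since $\gamma(b)\in\bar\Omega$ lies on the non-positive side of the supporting line at $\mathbf{p}'$, one gets $(\mathbf{r}_2-\mathbf{p}')\cdot\mathbf{n}_{p'}\le 0$, contradicting the strict inequality required by (\ref{variable21}). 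Equivalently, the affine function $\tau\mapsto(\gamma(\tau)-\mathbf{p}')\cdot\mathbf{n}_{p'}$ vanishes at $\tau'$ and is $\le 0$ at both $0$ and $b$, which bracket $\tau'$, so it vanishes identically, forcing the interior point $\mathbf{r}_1$ onto a supporting line---a contradiction. Either patch closes the gap, but the step must be rewritten.

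Finally, your closing remark that the finite-intersection hypothesis is essential to rule out the segment running along a flat piece of $\partial\Omega$ is off the mark: because $\mathbf{r}_1\in\Omega_\gamma^-\subset\Omega$ is an interior point, it cannot lie on any supporting line, so the segment can never run along $\partial\Omega$; in fact a standard convexity argument already gives $\gamma(\tau)\in\Omega$ for every $\tau\in[0,b)$, so the segment meets $\partial\Omega$ exactly once without any finiteness assumption. Your sign-based argument never actually invokes that hypothesis, which is a strength of the approach and worth saying plainly rather than presenting the hypothesis as load-bearing.
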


\begin{proof}
Without loss of generality, we work within the closure \( \overline{\Omega} \) of the region \( \Omega \). Suppose, for the sake of contradiction, that there are two distinct intersection points \( \mathbf{p_1} \) and \( \mathbf{p_2} \). Since \( \Omega \) is convex, all points on the line segments \( \overline{\mathbf{r_1}\mathbf{p_1}} \) and \( \overline{\mathbf{p_1}\mathbf{p_2}} \) are contained within \( \Omega \). Since \( \mathbf{p_1} \) lies on the boundary \( \partial\Omega \) , by the supporting hyperplane theorem of the convex region, the line segment \( \overline{\mathbf{p_1}\mathbf{p_2}} \) or \( \overline{\mathbf{r_1}\mathbf{p_1}} \) must also be part of \( \partial\Omega \). This conclusion contradicts our initial assumption that \( \overline{\mathbf{r_1}\mathbf{r_2}} \) has a finite number of intersection points with \( \partial\Omega \). Thus, the intersection point \( \mathbf{p} \) is unique.
\end{proof}

In the following, we will give the intersection property of a $C^2$ region with the neighborhood of boundary points.
\begin{lemma}\label{intersection}
We assume that $\Omega$ has a closed $C^2$ boundary $\partial \Omega=\Gamma(s)(0\leq s\leq \beta) $. 
Then, there exists $\delta_0>0$, for each $s_0~in~[0,\beta]$, $\forall\delta<\delta_0$, $B(\Gamma(s_0),\delta)$ has and only has two intersection points with $\Gamma(s)$.
\end{lemma}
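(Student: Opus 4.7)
The plan is to interpret the claim as asserting that the sphere $\partial B(\Gamma(s_0),\delta)$ meets the simple closed curve $\Gamma$ in exactly two points, and then reduce it to a local monotonicity argument combined with a uniform positive lower bound on the ``chord distance'' between far-apart arc parameters. Throughout I use that $\Gamma$ is arc-length parametrized, so the unit tangent $T(s) := \Gamma'(s)$ satisfies $|T(s)|=1$, and differentiating $T\cdot T \equiv 1$ yields $T(s)\cdot \Gamma''(s) = 0$; hence $\Gamma''$ is purely normal with magnitude the curvature $\kappa(s)$. Because $\Gamma$ is $C^2$ on the compact parameter interval $[0,\beta]$ (with endpoints identified), $\kappa$ is uniformly bounded by some $K<\infty$ and $\Gamma''$ is uniformly continuous.

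The first step is a local analysis near $s_0$. For fixed $s_0$, set $f(s) := |\Gamma(s) - \Gamma(s_0)|^2$. Taylor expanding and using $T(s_0)\cdot \Gamma''(s_0)=0$ to kill the cubic term yields
$$f(s) = (s-s_0)^2 + O\!\bigl(K^2 (s-s_0)^4\bigr).$$
Consequently there exists $r_0>0$, depending only on $K$ and the modulus of continuity of $\Gamma''$ (hence, by uniform continuity, independent of $s_0$), such that $f$ is strictly monotone on each of $(s_0 - r_0, s_0)$ and $(s_0, s_0 + r_0)$ and $f(s_0 \pm r_0) \ge r_0^2/2$. This yields exactly one solution of $f(s)=\delta^2$ on each side within this local window whenever $\delta < r_0/\sqrt{2}$.

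The second step bootstraps this to a global statement by excluding additional intersections far from $s_0$. Define
$$m := \min_{s_0 \in [0,\beta]} \ \min_{s\, :\, r_0 \le |s-s_0|_{\beta} \le \beta/2} |\Gamma(s) - \Gamma(s_0)|,$$
where $|\cdot|_{\beta}$ denotes arc-length distance modulo $\beta$. Since $\Omega$ is convex, $\Gamma$ is a simple closed curve, so the continuous function being minimized is strictly positive on a compact set, and hence $m>0$. Choosing $\delta_0 := \min(r_0/\sqrt{2},\,m)$ guarantees that for every $\delta < \delta_0$ no point of $\Gamma$ at arc-length distance at least $r_0$ from $\Gamma(s_0)$ can lie on $\partial B(\Gamma(s_0),\delta)$, so only the two local intersections survive.

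The main obstacle is making $r_0$ truly uniform in $s_0$: the quartic Taylor remainder involves the variation of $\Gamma''$ on $(s_0-r_0, s_0+r_0)$, and a naive pointwise argument would give an $r_0$ depending on $s_0$. This is precisely where uniform continuity of $\Gamma''$ on the compact parameter circle is used; together with the uniform curvature bound $K$, it allows a single $r_0$ to work for all $s_0$. Once uniformity is established, both the local and the global steps combine cleanly to deliver the claimed $\delta_0$.
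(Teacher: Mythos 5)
Your proof is correct, and it takes a genuinely different (and in fact cleaner) route than the paper's. Where the paper represents $\partial\Omega$ locally as a graph $y=f(x)$, reduces the circle--curve intersection to the scaled scalar equation $F(y)=(1+f^{(1)}(\xi)^2)y^2=1$ and argues uniqueness from $F'>0$, then globalizes by taking a finite subcover $\{B(\mathbf{p}_i,r(\mathbf{p}_i))\}$ of $\Gamma$ and ordering the resulting intersection parameters $a_1<\cdots<a_{2n}$ to choose $\delta_0$ as a fraction of the minimum gap, you instead work directly with the arc-length parametrization and the squared chord length $f(s)=|\Gamma(s)-\Gamma(s_0)|^2$. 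You show $f(s)=(s-s_0)^2+\mathcal{O}(K^2(s-s_0)^4)$ by killing the cubic term via $T(s_0)\cdot\Gamma''(s_0)=0$, deduce strict monotonicity of $f$ on each side of $s_0$ within a uniformly sized window, and then kill distant intersections with the positive minimum $m$ of $|\Gamma(s)-\Gamma(s_0)|$ over $\{|s-s_0|_\beta\ge r_0\}$, which is positive by compactness and simplicity of $\Gamma$. Your globalization step is more standard and easier to verify than the paper's finite-cover-plus-ordering argument, and it also makes the uniformity of $\delta_0$ in $s_0$ explicit. Two small remarks: first, your remainder bound actually needs the observation $T(s_0)\cdot\Gamma''(\xi)=(T(s_0)-T(\xi))\cdot\Gamma''(\xi)=\mathcal{O}(K^2|s-s_0|)$ (since $T(\xi)\cdot\Gamma''(\xi)=0$, not $T(s_0)\cdot\Gamma''(\xi)=0$), but the conclusion $f(s)=(s-s_0)^2+\mathcal{O}(K^2(s-s_0)^4)$ still follows and $r_0$ depends only on $K$, so uniform continuity of $\Gamma''$ is not even needed; second, simplicity of $\Gamma$ follows already from the stated parametrization of $\partial\Omega$ as a closed curve, so the appeal to convexity, while harmless, is not strictly necessary for this lemma.
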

\begin{proof}
Because that $\Gamma(s)$ is $C^2$ embedded in $\R^2$, for arbitrary $s_0\in[0,\beta]$, assuming that $\mathbf{x_0}=\Gamma(s_0)$, there exists an $r_0$ and a $C^2$ mapping $f(x):~\R\rightarrow \R$ such that upon relabeling and reorienting the coordinate axes if necessary, we have $\{B(\mathbf{x_0},r_0)\cap \Omega\}=\{\mathbf{x}=(x,y)\in B(\mathbf{x_0},r_0)|y>f(x) \}:=R_c(\mathbf{x_0}) $. Take $\mathbf{x_1}\in B(\mathbf{x_0},r_0)\cap \partial \Omega$, for arbitrary $\mathbf{x}\in B(\mathbf{x_0},r_0)\cap \partial \Omega$, (here we use $x/x_1$ to represent the x-coordinate of point $\mathbf{x}/\mathbf{x_1}$) using Taylor expansion, it has:
\begin{align*}
    f(x)= f(x_1)+f^{(1)}(\xi)(x-x_1),
\end{align*}
where $\xi$ is a point between $x$ and$x_1$. The intersection point between $\partial \Omega$ and $B(\mathbf{x_1},r)$ satisfies:
\begin{align*}
    (x-x_1)^2+f^{(1)}(\xi)^2(x-x_1)^2=r^2.
\end{align*}
Since $f\in C^2(\R)$, denote that  
there exists $C_{f1}>0$ and $C_{f2}>0$, $|f^{(1)}(x)|\leq C_{f1}$ and $|f^{(2)}(x)|\leq C_{f2}$ when $(x,f(x))\in B(\mathbf{x_0},r_0)\cap\partial\Omega$. Set $x-x_1=ry$, the equation above can be written as:
\begin{align}\label{geometryequation}
    (1+f^{(1)}(\xi)^2)y^2=1.
\end{align}
From the boundedness of $|f^{(1)}(x)|$, the solution $y_0$ of equation (\ref{geometryequation}) must satisfies $y_0\in[\frac{1}{\sqrt{1+C_{f1}^2}},1]$
Taking the left hand side of (\ref{geometryequation}) as $F(y)$ and considering the first order derivation of $F$:
\begin{align*}
    \frac{d}{dy}F(y)=2(1+f^{(1)}(\xi)^2)y+(1+2t_{\xi}rf^{(2)}(\xi))y^2,
\end{align*}
where $t_{\xi}\in[0,1]$ satisfies $\xi=x_1+t_{\xi}ry$. Since the boundedness of $|f^{(2)}(x)|$, there exists $r_1$, as $y>0$, $\frac{d}{dy}F(y)>0$, which means that (\ref{geometryequation}) has only one solution bigger than 0, the negative part is the same. Take $r(\mathbf{x_0})=\min\{r_0(\mathbf{x_0}), r_1(\mathbf{x_0}))\}$, $B(\mathbf{x_0},r_c)(r_c\leq r(\mathbf{x_0}))$ has two intersection points with $\Gamma$. 

It can be seen that in the proof above, for arbitrary point $\mathbf{x_1}$ in $B(\mathbf{x_0},r(\mathbf{x_0}))\cap \partial\Omega$, if we choose $r\leq r(\mathbf{x_0})-|\mathbf{x_1}-\mathbf{x_0}|$, $B(\mathbf{x_1},r)\subset B(\mathbf{x_0},r(\mathbf{x_0}))$ and has two intersection points with $\Gamma$.
From the compactness of $\partial\Omega$, for each $\mathbf{x}$, there exists the above $r_0(\mathbf{x})~and~r_1(\mathbf{x})$ and $\bigcup\limits_{x}B(\mathbf{x},r(\mathbf{x}))$ is an open cover of $\Gamma$. Then, there exists $\mathbf{p_1}\cdots \mathbf{p_n}$, $\bigcup\limits_{i=1\cdots n}B(\mathbf{p_i},r(\mathbf{p_i}))$ is the finite open cover. We assume $s_{i1}$ and $s_{i2}$ are the two intersection point of $B(\mathbf{p_i},r(\mathbf{p_i}))$ and $\Gamma(s)$. Without loss of generality, the increasing arrangement is
\begin{align*}
    s_{11},~s_{n2},~s_{21},~s_{12},~s_{31},~s_{22}\cdots ,~s_{(n-1)1},~s_{(n-2)2},~s_{n1},~s_{(n-1)2}.
\end{align*}
Renumber them as $a_1\cdots a_{2n}$ and take $\delta_0=0.49\min\{a(i)-a(i-1)(i=2\cdots 2n),~a(1)+\beta-a(2n)\}$, which is desired.
\end{proof}

Nextly, we will propose another lemma which build up the bridge between $\N_\Omega^-$ and $\N_\Omega^*$:
\begin{theorem}\label{locallimitnew}
       Considering that either the conditions in Theorem \ref{mainresult} or in Theorem \ref{mainresult} is satisfied, it has the following result:
    \begin{equation}\label{C3resultnew2}
       (\N_{\Omega}^{*} u,v)_{\Omega_{\gamma}^+}=(\sigma\frac{\partial u}{\partial \mathbf{n}},v)_{\partial\Omega}+\mathcal{O}(\delta).
    \end{equation}
\end{theorem}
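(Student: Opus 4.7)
The plan is to bootstrap Theorem \ref{locallimitnew} from the weak convergence already established for $\N_\Omega^+$ in Theorem \ref{mainresult} (or Theorem \ref{rectangleright}), by decomposing $\N_\Omega^*$ into $\N_\Omega^+$ plus a residual boundary-layer bilinear form that we can control by a crude Lipschitz estimate. No new Taylor-expansion calculation near $\partial\Omega$ is needed.

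First I would set up the algebraic identity. Fix $\mathbf{x}\in\Omega_\gamma^+$. Writing $\Omega^+ = \Omega \cup \Omega_\gamma^+$ (disjoint up to a set of measure zero) and noting that if $\mathbf{y}\in\Omega\cap B_\delta(\mathbf{x})$ then $\mathbf{y}\in\Omega$ lies within distance $\delta$ of $\mathbf{x}\notin\bar\Omega$, so $\mathbf{y}\in\Omega_\gamma^-$, the integral defining $\N_\Omega^*$ splits cleanly as
\begin{equation*}
\N_\Omega^* u(\mathbf{x})=\int_{\Omega_\gamma^-}[u(\mathbf{x})-u(\mathbf{y})]\gamma_\delta(\mathbf{x},\mathbf{y})\,d\mathbf{y}+\int_{\Omega_\gamma^+}[u(\mathbf{x})-u(\mathbf{y})]\gamma_\delta(\mathbf{x},\mathbf{y})\,d\mathbf{y}=\N_\Omega^+ u(\mathbf{x})+R_\delta u(\mathbf{x}).
\end{equation*}
Pairing with $v$ over $\Omega_\gamma^+$ and applying the standard variable-swap symmetrization $\mathbf{x}\leftrightarrow\mathbf{y}$ to the cross-term (using that $\gamma_\delta$ is symmetric), the residual contributes exactly the symmetric bilinear form, giving
\begin{equation*}
(\N_\Omega^*u,v)_{\Omega_\gamma^+}=(\N_\Omega^+u,v)_{\Omega_\gamma^+}+A_{\Omega_\gamma^+,\Omega_\gamma^+}(u,v).
\end{equation*}

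Second, the first term on the right-hand side is handled directly by Theorem \ref{mainresult} when $\partial\Omega$ is a closed $C^2$ curve, or by Theorem \ref{rectangleright} when $\Omega$ is a convex polygon, giving $(\N_\Omega^+u,v)_{\Omega_\gamma^+}=\langle\sigma\partial_\mathbf{n}u,v\rangle_{\partial\Omega}+\mathcal{O}(\delta)$. It therefore remains to show $A_{\Omega_\gamma^+,\Omega_\gamma^+}(u,v)=\mathcal{O}(\delta)$. Since $u\in C^2(\Omega_\gamma^+\cup\Omega_\gamma^-)$ and $v\in C^1(\Omega_\gamma^+\cup\Omega_\gamma^-)$, both gradients are bounded on $\Omega_\gamma^+$, and the Lipschitz estimates $|u(\mathbf{x})-u(\mathbf{y})|\le\|\nabla u\|_\infty|\mathbf{x}-\mathbf{y}|$ and analogously for $v$ yield
\begin{equation*}
|A_{\Omega_\gamma^+,\Omega_\gamma^+}(u,v)|\le\tfrac{1}{2}\|\nabla u\|_\infty\|\nabla v\|_\infty\!\int_{\Omega_\gamma^+}\!\int_{\R^2}|\mathbf{z}|^2\gamma_\delta(|\mathbf{z}|)\,d\mathbf{z}\,d\mathbf{x}\le C\sigma\,|\Omega_\gamma^+|,
\end{equation*}
where the inner integral is controlled by the bounded second moment of $\gamma_\delta$ from Definition \ref{kerneldef}. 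Because $\partial\Omega$ is a rectifiable closed curve of finite length, the boundary strip satisfies $|\Omega_\gamma^+|\le C\,\mathrm{length}(\partial\Omega)\,\delta=\mathcal{O}(\delta)$, which closes the argument.

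The main obstacle is essentially bookkeeping rather than analysis: one must verify cleanly that $\Omega\cap B_\delta(\mathbf{x})\subset\Omega_\gamma^-$ for every $\mathbf{x}\in\Omega_\gamma^+$ so that the $\Omega$-portion of the $\N_\Omega^*$ integrand coincides with the $\N_\Omega^+$ integrand, and that the area estimate $|\Omega_\gamma^+|=\mathcal{O}(\delta)$ is uniform across the two admissible geometries (smooth boundary and convex polygon) so that the constant in the $\mathcal{O}(\delta)$ bound does not deteriorate at corners. Once these two geometric facts are in hand, the proof is essentially a one-line consequence of the previously established convergence of $\N_\Omega^+$.
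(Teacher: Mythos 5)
Your proof is correct and follows essentially the same route as the paper: you split the $\mathbf{y}$-integral in $\N_\Omega^*$ over $\Omega^+ = \Omega_\gamma^-\cup\Omega_\gamma^+$, identify the $\Omega_\gamma^-$ part with $(\N_\Omega^+u,v)_{\Omega_\gamma^+}$ and invoke Theorem~\ref{mainresult}/\ref{rectangleright}, and reduce the $\Omega_\gamma^+$ part by the $\mathbf{x}\leftrightarrow\mathbf{y}$ symmetrization to $A_{\Omega_\gamma^+,\Omega_\gamma^+}(u,v)$, which you then show is $\mathcal{O}(\delta)$ via the strip area $|\Omega_\gamma^+|=\mathcal{O}(\delta)$ and the bounded second moment of $\gamma_\delta$. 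The only difference is cosmetic: the paper estimates $A_{\Omega_\gamma^+,\Omega_\gamma^+}(u,v)$ by a Taylor expansion with Hessian remainders in polar coordinates, whereas your plain Lipschitz bound reaches the same conclusion more directly.
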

\begin{proof}
    We can first divide the integration region into two parts as follows:
    \begin{align}\label{eqndiv}
        (\N_{\Omega}^{*} u,v)_{\Omega_{\gamma}^+}&=\int_{\mathbf{x}\in\Omega_\gamma^+}\int_{\mathbf{y}\in\Omega^+}[u(\mathbf{x})-u(\mathbf{y})]\gamma_\delta(\mathbf{x},\mathbf{y})v(\mathbf{x})d\mathbf{y}d\mathbf{x}\nonumber\\&
        =\int_{\mathbf{x}\in\Omega_\gamma^+}\int_{\mathbf{y}\in\Omega_\gamma^+}[u(\mathbf{x})-u(\mathbf{y})]\gamma_\delta(\mathbf{x},\mathbf{y})v(\mathbf{x})d\mathbf{y}d\mathbf{x}\nonumber\\&\quad+\int_{\mathbf{x}\in\Omega_\gamma^+}\int_{\mathbf{y}\in\Omega_\gamma^-}[u(\mathbf{x})-u(\mathbf{y})]\gamma_\delta(\mathbf{x},\mathbf{y})v(\mathbf{x})d\mathbf{y}d\mathbf{x}.
    \end{align}
    Similar with the proof of Theorem \ref{oneonone}, Theorem \ref{mainresult} and Theorem \ref{rectangleright}, the last part in (\ref{eqndiv}) can be approximated as follows:
    \begin{equation}\label{pastre}
        \int_{\mathbf{x}\in\Omega_\gamma^+}\int_{\mathbf{y}\in\Omega_\gamma^-}[u(\mathbf{x})-u(\mathbf{y})]\gamma_\delta(\mathbf{x},\mathbf{y})v(\mathbf{x})d\mathbf{y}d\mathbf{x}=(\sigma\frac{\partial u}{\partial \mathbf{n}},v)_{\partial\Omega}+\mathcal{O}(\delta).
    \end{equation}
    As for the first term to the right of the equation, we have:
    \begin{align}\label{eqnapart}
        \int_{\mathbf{x}\in\Omega_\gamma^+}&\int_{\mathbf{y}\in\Omega_\gamma^+}[u(\mathbf{x})-u(\mathbf{y})]\gamma_\delta(\mathbf{x},\mathbf{y})v(\mathbf{x})d\mathbf{y}d\mathbf{x}\nonumber\\&=\frac{1}{2} \int_{\mathbf{x}\in\Omega_\gamma^+}\int_{\mathbf{y}\in\Omega_\gamma^+}[u(\mathbf{x})-u(\mathbf{y})][v(\mathbf{x})-v(\mathbf{y})]\gamma_\delta(\mathbf{x},\mathbf{y})d\mathbf{y}d\mathbf{x}\\&=\frac{1}{2} \int_{\mathbf{x}\in\Omega_\gamma^+}\int_{\mathbf{y}\in\Omega_\gamma^+}[(\mathbf{x}-\mathbf{y})^T\nabla u(\mathbf{x})+(\mathbf{x}-\mathbf{y})^T H_u(\mathbf{\xi})(\mathbf{x}-\mathbf{y})]\nonumber\\&\quad\times
        [(\mathbf{x}-\mathbf{y})^T\nabla v(\mathbf{x})+(\mathbf{x}-\mathbf{y})^TH_v(\mathbf{\eta})(\mathbf{x}-\mathbf{y})]\gamma_\delta(\mathbf{x},\mathbf{y})d\mathbf{y}d\mathbf{x},\nonumber
    \end{align}
    where $H_u(\mathbf{x})$ is the hessian matrix of function $u$ in point $\mathbf{x}$. Denoting $(\cos(\theta),\sin(\theta))$ as $V(\theta)$ and substituting $\mathbf{y}$ with $\mathbf{x}+V(\theta)^T$, we have:
    \begin{align}
         \int_{\mathbf{x}\in\Omega_\gamma^+}&\int_{\mathbf{y}\in\Omega_\gamma^+}[u(\mathbf{x})-u(\mathbf{y})]\gamma_\delta(\mathbf{x},\mathbf{y})v(\mathbf{x})d\mathbf{y}d\mathbf{x}\nonumber\\&= \frac{1}{2} \int_{\mathbf{x}\in\Omega_\gamma^+}\int_{\theta=-\pi}^\pi\int_{s=0}^{r_u(\mathbf{x},\theta)}s^3[V(\theta)\cdot\nabla u(\mathbf{x})+sV(\theta) H_u(\xi)V(\theta)^T]\nonumber\\&\quad\times
        [V(\theta)\cdot\nabla v(\mathbf{x})+sV(\theta) H_v(\eta)V(\theta)^T]\widetilde{\gamma}_\delta(s)dsd\theta d\mathbf{x},
    \end{align}
    where $r_u(\mathbf{x},\theta)=\max_r\{\mathbf{x}+rV(\theta)^T\in\Omega^+_\gamma\}$. From the regularity of function u and v, with the condition that $\delta$ is small enough, we can get that:
    \begin{align}\label{eqn450}
        \int_{\mathbf{x}\in\Omega_\gamma^+}&\int_{-\pi}^\pi\int_{0}^{r_u(\mathbf{x},\theta)}s^3[V(\theta)\cdot\nabla u(\mathbf{x})+sV(\theta) H_u(\xi)V(\theta)^\prime]\nonumber\\&\quad\cdot
        [V(\theta)\cdot\nabla v(\mathbf{x})+sV(\theta) H_v(\eta)V(\theta)^\prime]\widetilde{\gamma}_\delta(s)dsd\theta d\mathbf{x}\nonumber\\&
        \leq \int_{\mathbf{x}\in\Omega_\gamma^+}\int_{-\pi}^\pi\int_{0}^{r_u(\mathbf{x},\theta)} (s^3\vert V(\theta)\cdot\nabla u(\mathbf{x})V(\theta)\cdot\nabla v(\mathbf{x})\vert+Cs^4)\widetilde{\gamma}_\delta(s)dsd\theta d\mathbf{x}\nonumber\\&
        \leq \int_{\mathbf{x}\in\Omega_\gamma^+}C +\mathcal{O}(\delta)d\mathbf{x}\leq C\delta.
    \end{align}
    From (\ref{eqndiv}), (\ref{pastre}) and (\ref{eqn450}), it has that:
    \begin{equation}
        (\N_{\Omega}^{*} u,v)_{\Omega_{\gamma}^+}=(\sigma\frac{\partial u}{\partial \mathbf{n}},v)_{\partial\Omega}+\mathcal{O}(\delta).
    \end{equation}
    The theorem has been proved.
\end{proof}
\subsection{The proof of Theorem \ref{mainresult}}\label{append3}
\begin{proof}
   \begin{figure}[h]
    \centering
    \includegraphics[width=120mm]{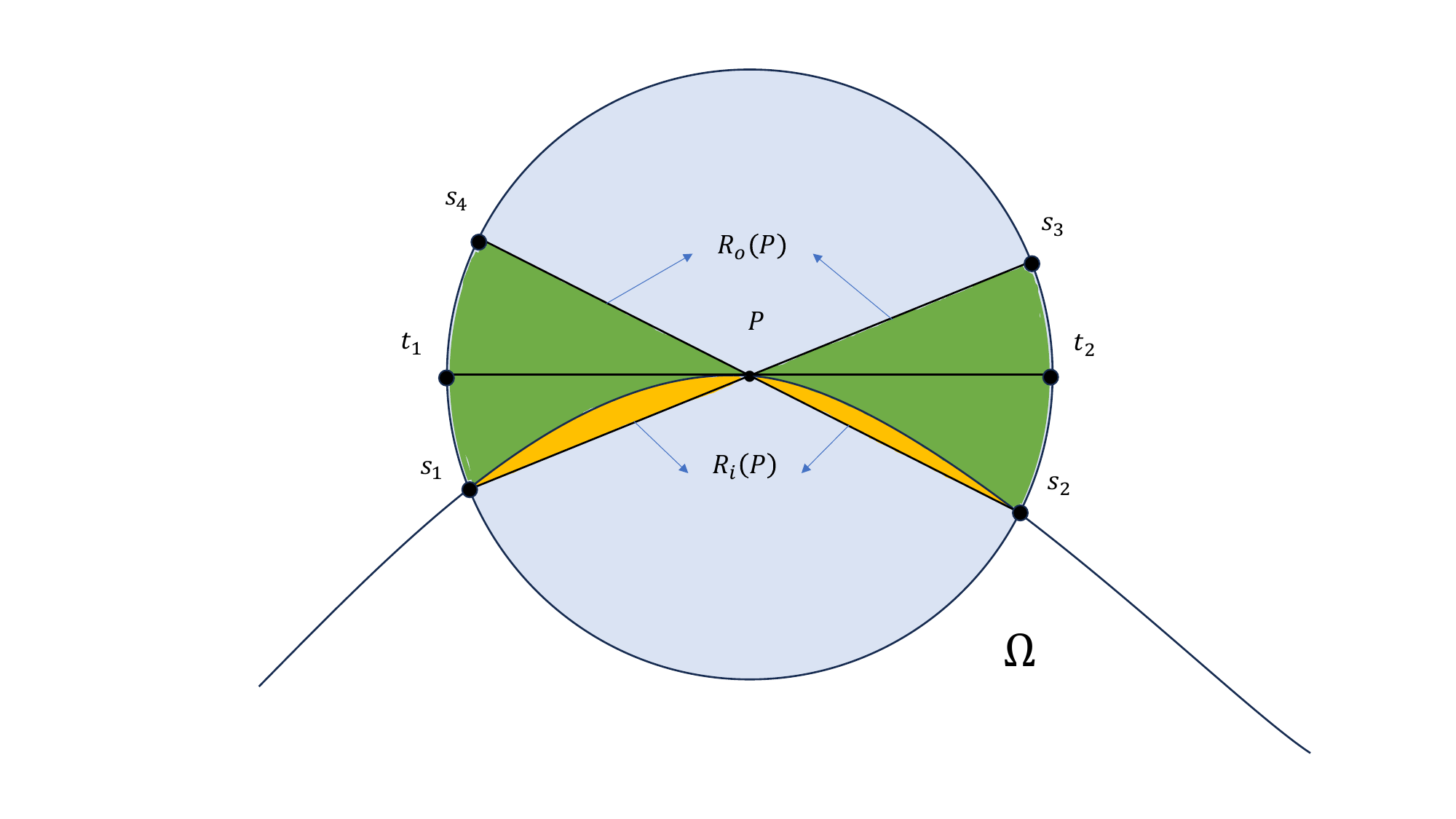}
    \caption{Schematic when the region is $\Omega$}
    \label{fig2:env}
    \end{figure}
    Taking $\mathbf{p}$ as a point in $\partial\Omega$, from Lemma \ref{convex} and \ref{intersection}, it can be got that there exists $\delta_1>0$, when $\delta<\delta_1$, there are  always two intersection points of $B(\mathbf{p},\delta)$ and $\partial\Omega$.
    With the convex assumption, the line whose vertices are $\forall \mathbf{p_1} \in \Omega_\gamma^+$ and $\forall \mathbf{p_2} \in \mathring{\Omega}_\gamma^-$ has only one intersection point with $\partial\Omega$. Then, the variable substitution (\ref{coordtrans}) is reasonable. As illustrated in Figure \ref{fig2:env}, denote the two intersection points of $B(\mathbf{p}, \delta)$ with $\partial\Omega$ as $\mathbf{s_1}$ and $\mathbf{s_2}$, and the two intersection points with the tangent line of $\partial\Omega$ at $\mathbf{p}$ as $\mathbf{t_1}$ and $\mathbf{t_2}$. For $\mathbf{p_1}$ lying in the sector $\mathbf{s_1}\mathbf{x}\mathbf{s_2}$, the variables $(s, t)$ are restricted to $s \in (0, \delta)$ and $t \in (0, \delta - s)$, ensuring the mapping $V_n$ has a straightforward structure within this region. Using $\overset{\frown}{abc}$ to denote the arc sector $abc$, the remaining region is partitioned into two components:
    \begin{equation}
    \left\{
    \begin{aligned}
    &R_i(\mathbf{p})=(\mathring{\Omega}_\gamma^-\cap B(\mathbf{p},\delta))/\overset{\frown} {\mathbf{s_1}\mathbf{p}\mathbf{s_2}},\\
    &R_o(\mathbf{p})=(\Omega_\gamma^+\cap B(\mathbf{p},\delta))/\overset{\frown} {\mathbf{s_3}\mathbf{p}\mathbf{s_4}},
\end{aligned}
    \right.
\end{equation}
where $\mathbf{s_3}$ and $\mathbf{s_4}$ are the another points line $\mathbf{s_1} \mathbf{p}$ and $\mathbf{s_2} \mathbf{p}$ intersects with $B(\mathbf{p},\delta)$. 
 Divided $\mathring{\Omega}_\gamma^-\cap B(\mathbf{p},\delta)$ into two parts and use $\mathbf{p}$, $\mathbf{x}$ and $|\mathbf{p}\mathbf{y}|$ to represent $\mathbf{y}$, it can be obtained that:
    \begin{align}\label{divide2}
    &(\N_{\Omega}^- u,v)_{\Omega_{\gamma}^-}\nonumber\\&
    =\int_{\mathbf{x}\in\Omega_{\gamma}^-}\int_{\mathbf{y}\in\Omega_{\gamma}^+}[u(\mathbf{y})-u(\mathbf{x})]v(\mathbf{x})\gamma_\delta(\mathbf{x},\mathbf{y})d\mathbf{y}d\mathbf{x}\nonumber\\&
    =\int_{\mathbf{p}\in\partial\Omega}\int_{\mathbf{x}\in \Omega_{\gamma}^-}\int_{s=0}^{rg_s}[u(\mathbf{p}+s(\frac{\mathbf{p}-\mathbf{x}}{|\mathbf{p}-\mathbf{x}|}))-u(\mathbf{x})]v(\mathbf{x})\gamma_\delta(\mathbf{x},\mathbf{p}+s(\frac{\mathbf{p}-\mathbf{x}}{|\mathbf{p}-\mathbf{x}|}))\det(J_y)dtd\mathbf{x}d\mathbf{p}\nonumber
    \\&
    =\int_{\mathbf{p}\in\partial\Omega}\int_{\mathbf{x}\in \overset{\frown} {\mathbf{s_1}\mathbf{p}\mathbf{s_2}}}\int_{s=0}^{rg_s}[u(\mathbf{p}+s(\frac{\mathbf{p}-\mathbf{x}}{|\mathbf{p}-\mathbf{x}|}))-u(\mathbf{x})]v(\mathbf{x})\gamma_\delta(\mathbf{x},\mathbf{p}+s(\frac{\mathbf{p}-\mathbf{x}}{|\mathbf{p}-\mathbf{x}|}))\det(J_y)dsd\mathbf{x}d\mathbf{p}\nonumber\\&
    \quad+\int_{\mathbf{p}\in\partial\Omega}\int_{\mathbf{x}\in R_i(\mathbf{p})}\int_{s=0}^{rg_s}[u(\mathbf{p}+s(\frac{\mathbf{p}-\mathbf{x}}{|\mathbf{p}-\mathbf{x}|}))-u(\mathbf{x})]v(\mathbf{x})\gamma_\delta(\mathbf{x},\mathbf{p}+s(\frac{\mathbf{p}-\mathbf{x}}{|\mathbf{p}-\mathbf{x}|}))\det(J_y)dsd\mathbf{x}d\mathbf{p},
    \end{align} 
where $rg_s=\max\{0, \delta-|\mathbf{p}-\mathbf{x}|\}$, $\det(J_y)$ is the determinant of Jacobian matrix of the $\mathbf{y}$-substitution process. Noting $\mathbf{p}+s(\frac{\mathbf{p}-\mathbf{x}}{|\mathbf{p}-\mathbf{x}|})$ as $\mathbf{y}(\mathbf{p},\mathbf{x},s)$, then, (\ref{divide2}) can be written as follows:
    \begin{align}\label{divide3part328}
        &(\N_{\Omega}^- u,v)_{\Omega_{\gamma}^-}\nonumber\\&
        =\int_{\mathbf{p}\in\partial\Omega}\int_{\mathbf{x}\in \overset{\frown} {\mathbf{t_1}\mathbf{p}\mathbf{t_2}}}\int_{s=0}^{rg_s}[u(\mathbf{y}(\mathbf{p},\mathbf{x},s))-u(\mathbf{x})]v(\mathbf{x})\gamma_\delta(\mathbf{x},\mathbf{y}(\mathbf{p},\mathbf{x},s))\det(J_y)dsd\mathbf{x}d\mathbf{p} \nonumber\\&
\quad-\int_{\mathbf{p}\in\partial\Omega}\int_{\mathbf{x}\in \overset{\frown} {\mathbf{t_1}\mathbf{p}\mathbf{s_1}}\cup\overset{\frown} {\mathbf{s_2}\mathbf{p}\mathbf{t_2}}}\int_{s=0}^{rg_s}[u(\mathbf{y}(\mathbf{p},\mathbf{x},s))-u(\mathbf{x})]v(\mathbf{x})\gamma_\delta(\mathbf{x},\mathbf{y}(\mathbf{p},\mathbf{x},s))\det(J_y)dsd\mathbf{x}d\mathbf{p}\nonumber\\&
\quad
+\int_{\mathbf{p}\in\partial\Omega}\int_{\mathbf{x}\in R_i(\mathbf{p})}\int_{s=0}^{rg_s}[u(\mathbf{y}(\mathbf{p},\mathbf{x},s))-u(\mathbf{x})]v(\mathbf{x})\gamma_\delta(\mathbf{x},\mathbf{y}(\mathbf{p},\mathbf{x},s))\det(J_y)dsd\mathbf{x}d\mathbf{p}\\&=:A_1-A_2+A_3.\nonumber
    \end{align}
We consider the term $A_1$ firstly and assume that the parametric expressions for $\partial\Omega$ is $\Gamma(\eta)=(x(\eta),y(\eta)),~\eta\in[0,\beta]$. Comparing with the map $g$ mentioned in Theorem \ref{oneonone}, it can be easily found that as we substitute $\mathbf{x}$ with $\mathbf{p}(\eta)-tV_c(\theta,\eta)$, where $t=|\mathbf{p}\mathbf{x}|$, $A_1$ is totally the same as (\ref{defweakformtaylor}) in Theorem \ref{oneonone}. From the result in Theorem \ref{oneonone}, it has:
\begin{align}\label{oldresult}
    A_1&=\int_{s=0}^{\delta}\int_{t=0}^{\delta-s}\int_{\eta=0}^{\beta} \int_{\theta=-\frac{\pi}{2}}^{\frac{\pi}{2}}\left[(s+t) V_c(\theta,\eta)^T \cdot \nabla u(\mathbf{p}(\eta)) +(s^2-t^2)[u(\theta,\eta)]_\mathcal{H}+\mathcal{O}(\delta^3) \right] \nonumber\\&\quad
\det(J)\widetilde{\gamma}_\delta(s+t)\left[v(\mathbf{p}(\eta)) - tV_c(\theta,\eta)^T\cdot \nabla v(\mathbf{p}(\eta))+ \mathcal{O}(\delta^2)
\right] d\theta d\eta dtds\nonumber
\\&=\langle\sigma\partial_\mathbf{n} u,v\rangle_{\partial\Omega}+\mathcal{O}(\delta).
\end{align}
Next, we proceed to derive an upper bound in terms of $\delta$  for the remaining two components of (\ref{divide2}). Consider a Cartesian coordinate system centered at $\mathbf{p}\in\partial\Omega$
, where the x-axis is aligned with the tangent vector at $\mathbf{p}$ and the y-axis with the outer normal vector at $\mathbf{p}$. 
    Then, we can get that:
    \begin{align}
        \vert A_2-A_3\vert&=|\int_{\mathbf{p}\in\partial\Omega}\int_{\mathbf{x}\in R_i(\mathbf{p})}\int_{s=0}^{rg_s}[u(\mathbf{y}(\mathbf{p},\mathbf{x},s))-u(x)]v(x)\gamma_\delta(\mathbf{x},\mathbf{y}(\mathbf{p},\mathbf{x},s))\det(J_y)dsd\mathbf{x}d\mathbf{p}\nonumber\\&\quad-\int_{\mathbf{p}\in\partial\Omega}\int_{x\in \overset{\frown} {\mathbf{t_1}\mathbf{p}\mathbf{s_1}}\cup\overset{\frown} {\mathbf{s_2}\mathbf{p}\mathbf{t_2}}}\int_{s=0}^{rg_s}[u(\mathbf{y}(\mathbf{p},\mathbf{x},s))-u(x)]v(x)\gamma_\delta(\mathbf{x},\mathbf{y}(\mathbf{p},\mathbf{x},s))\det(J_y)dsd\mathbf{x}d\mathbf{p}|\nonumber\\&
        =\int_{\eta=0}^{\beta} \int_{\theta\in ts(\eta)}\int_{s=0}^{\delta}\int_{t=\min\{\delta-s,tmax\{p,\theta\}\}}^{\delta-s}\widetilde{\gamma}_\delta(s+t)|\det(J)(v(\mathbf{p}(\eta))+\mathcal{O}(\delta)| \nonumber\\&\quad \times\left|(s+t) V_c(\theta,\eta)^T\cdot \nabla u(\mathbf{p}(\eta))+\mathcal{O}(\delta^2)\right| d\theta d\eta dtds, 
\end{align}
where 
$tmax=\mathop{\sup}\limits_{t} \{\mathbf{p}(\eta)-tV_c(\theta,\eta) \in \Omega_{\gamma}^-\}$ and $ts(\eta)=(-\frac{\pi}{2},-\frac{\pi}{2}+\angle t_1\mathbf{p}(\eta)s_1)\cup(\frac{\pi}{2}-\angle t_2\mathbf{p}(\eta)s_2,\frac{\pi}{2}) $.
Then, by the enlargement of the integration area, it has the following inequality:
    \begin{align}\label{eqn331new}
         \vert A_2-A_3\vert &\leq
         \int_{s=0}^{\delta}\int_{t=0}^{\delta-s}\int_{\eta=0}^{\beta} \int_{\theta\in ts(\eta)}\widetilde{\gamma}_\delta(s+t)
\vert v(\mathbf{p}(\eta))+\mathcal{O}(\delta)\vert\nonumber\\&\quad \times\left|\det(J)((s+t) V_c(\theta,\eta)^T\cdot \nabla u(\mathbf{p}(\eta))+\mathcal{O}(\delta^2))\right|  d\theta d\eta dtds  \nonumber\\&
\leq
\int_{r=0}^{\delta}\int_{\eta=0}^{\beta} \int_{\theta\in ts(\eta)}r^3\widetilde{\gamma}_\delta(r)\sqrt{(x^\prime(\eta))^2+(y^\prime(\eta))^2}
\vert v(\mathbf{p}(\eta))\vert\left|\cos(\theta) V_c(\theta,\eta)^T\cdot \nabla u(\mathbf{p}(\eta))\right| d\theta d\eta dr\nonumber\\&\quad+\int_{s=0}^{\delta}\int_{t=0}^{\delta-s}\int_{\eta=0}^{\beta} \int_{\theta\in ts(\eta)}\widetilde{\gamma}_\delta(s+t)
\vert \det(J)(\mathcal{O}(\delta)(s+t) V_c(\theta,\eta)^T\cdot \nabla u(\mathbf{p}(\eta))\nonumber\\&\quad\quad+\mathcal{O}(\delta^2)v(\mathbf{p}(\eta))+\mathcal{O}(\delta^3))\vert d\theta d\eta dtds,
    \end{align}
    Noting that in this coordinate system, $u(x)$ is the expression of boundary and $\vert x_1-x\vert\leq\delta$, using taylor expansion, it can be obtained that:
    \begin{align}
        u(x_1)&=u(x)+\frac{du}{d\mathbf{x}}(x)(x_1-x)+\frac{1}{2}\frac{d^2u}{d\mathbf{x}^2}(x)(x_1-x)^2+O(x_1-x)^3\nonumber\\&
        =\frac{1}{2}\kappa(x)(x_1)^2+O(x_1^3).
    \end{align}
    Next, we can approximate $\angle \mathbf{t_1}\mathbf{p}\mathbf{s_1}$ and $\angle \mathbf{t_2}\mathbf{p}\mathbf{s_2}$. From the properties of convex functions, we can get that:
    \begin{equation}\label{angleapp}
        \max\{\tan\angle \mathbf{t_1}\mathbf{p}\mathbf{s_1},\tan\angle \mathbf{t_2}\mathbf{p}\mathbf{s_2}\}\leq \frac{u(\delta)}{\delta}=\frac{1}{2}\kappa(\mathbf{p})\delta+\mathcal{O}(\delta^2).
    \end{equation}
From (\ref{angleapp}) and the regularity of the boundary, we can get the approximation to sine and cosine values of the angle:
\begin{align}
    &\max\{{\angle \mathbf{t_1}\mathbf{p}\mathbf{s_1},\angle \mathbf{t_2}\mathbf{p}\mathbf{s_2}}\}\leq \arctan \frac{u(\delta)}{\delta}\leq \frac{K}{2}\delta+\mathcal{O}(\delta^2),\\
    &\min\{\cos\angle \mathbf{t_1}\mathbf{p}\mathbf{s_1},\cos\angle \mathbf{t_2}\mathbf{p}\mathbf{s_2}\}\geq \frac{1}{\sqrt{1+(\frac{u(\delta)}{\delta})^2}}\geq1-\frac{K^2}{8}\delta^2+\mathcal{O}(\delta^3).
\end{align}
Then, as for $\theta\in (\frac{\pi}{2}-\angle \mathbf{t_2}\mathbf{p}\mathbf{s_2},\frac{\pi}{2})$, it has that:
\begin{equation}
    \begin{aligned}
    &\cos(\theta)=\sin(\frac{\pi}{2}-\theta)\leq \sin(\angle \mathbf{t_2}\mathbf{p}\mathbf{s_2})\leq \frac{K}{2}\delta+\mathcal{O}(\delta^2),\\&
    1\geq\sin(\theta)=\cos(\frac{\pi}{2}-\theta)\geq \cos(\angle \mathbf{t_2}\mathbf{p}\mathbf{s_2})\geq 1-\frac{K^2}{8}\delta^2+\mathcal{O}(\delta^3).
    \end{aligned}
\end{equation}
And for $\theta\in ts(\eta)=(-\frac{\pi}{2},-\frac{\pi}{2}+\angle \mathbf{t_1}\mathbf{p}\mathbf{s_1})$, it has that:
\begin{equation}
    \begin{aligned}
        &\cos(\theta)=\sin(\frac{\pi}{2}-\theta)\leq \sin(\angle \mathbf{t_1}\mathbf{p}\mathbf{s_1})\leq \frac{K}{2}\delta+\mathcal{O}(\delta^2),\\&
    -1\leq\sin(\theta)=\cos(\frac{\pi}{2}-\theta)\leq \cos(\angle \mathbf{t_1}\mathbf{p}\mathbf{s_1})\leq 1-\frac{K^2}{8}\delta^2+\mathcal{O}(\delta^3).
    \end{aligned}
\end{equation}
Then, we have:
\begin{align}
    &\int_{\theta\in ts(\eta)}\vert\cos(\theta) \left(\begin{array}{cc}
    \cos(\theta) & \sin(\theta) \\
     -\sin(\theta)& \cos(\theta)
\end{array}\right)\vert d\theta\nonumber\\&
 \leq\int_{\theta\in ts(\eta)}(\frac{K}{2}\delta+\mathcal{O}(\delta^2))\vert\left(\begin{array}{cc}
    \cos(\theta) & \sin(\theta) \\
     -\sin(\theta)& \cos(\theta)
\end{array}\right)\vert d\theta\nonumber\\&
\leq \frac{1}{4}\left(\begin{array}{cc}
   K^3\delta^3+\mathcal{O}(\delta^4) & 2K^2\delta^2+\mathcal{O}(\delta^3) \\
     2K^2\delta^2+\mathcal{O}(\delta^3)&  K^3\delta^3+\mathcal{O}(\delta^4)
\end{array}\right).
\end{align}
Due to the boundedness of $\Omega$ and the regularity $u \in C^3(\Omega^+)$, $v \in C^2(\Omega^+)$, we impose the a priori bounds $|\partial^\alpha u(\mathbf{x})| \leq C_u$ for $|\alpha| \leq 2$, $\mathbf{x}\in\Omega^+$ and $|\partial^\alpha v(\mathbf{x})| \leq C_v$ for $|\alpha| \leq 1$, $\mathbf{x}\in\Omega^+$. We have the following estimate for the first term at the right end of the last inequality for (\ref{eqn331new}):
\begin{align}\label{rest0}
\int_{r=0}^{\delta}&\int_{\eta=0}^{\beta} \int_{\theta\in ts(\eta)}r^3\widetilde{\gamma}_\delta(r)\sqrt{(x^\prime(\eta))^2+(y^\prime(\eta))^2}
\vert v(\mathbf{p}(\eta))\vert\left|\cos(\theta) V_c(\theta,\eta)^T\cdot \nabla u(\mathbf{p}(\eta))\right| d\theta d\eta dr
    \nonumber\\&\leq 
    \int_{r=0}^{\delta}r^3\widetilde{\gamma}_\delta(r)\int_{\eta=0}^{\beta} 2\bigg\vert\int_{\theta\in ts(\eta)}\vert \cos(\theta)\left(\begin{array}{cc}
    \cos(\theta) & \sin(\theta) \\
     -\sin(\theta)& \cos(\theta)
\end{array}\right)\vert d\theta\bigg\vert_{\infty}\nonumber
 \\&\quad\cdot \| \left(
     n_{1p} ~ n_{2p}
\right)\|_{1}\|\nabla u(\mathbf{p}(\eta))\|_{1}\nonumber \sqrt{(x^\prime(\eta))^2+(y^\prime(\eta))^2}
\vert v(\mathbf{p}(\eta))\vert  d\eta dr\nonumber\\&
\leq 
\int_{r=0}^{\delta}r^3\widetilde{\gamma}_\delta(r)\int_{\eta=0}^{\beta}8C_uC_v(\frac{K^2\delta^2}{2}+\mathcal{O}(\delta^{3}))\sqrt{(x^\prime(\eta))^2+(y^\prime(\eta))^2} d\eta dr\nonumber\\&
\leq 
4\beta C_uC_vK^2\delta^2 \int_{r=0}^{\delta}r^3\widetilde{\gamma}_\delta(r)(1+\mathcal{O}(\delta)) dr=\mathcal{O}(\delta^2).
\end{align}
Totally the same, as for the second part in (\ref{eqn331new}), by direct scaling and the fact that the length of the integration interval of $\theta$ is $\mathcal{O}(\delta)$, it has that:
\begin{align}\label{eqn340new}
    \int_{s=0}^{\delta}\int_{t=0}^{\delta-s}\int_{\eta=0}^{\beta} &\int_{\theta\in ts(\eta)}\widetilde{\gamma}_\delta(s+t)
\vert \det(J)( \mathcal{O}(\delta)(s+t) V_c(\theta,\eta)^T\cdot \nabla u(\mathbf{p}(\eta))\nonumber\\&\quad\quad+\mathcal{O}(\delta^2)v(\mathbf{p}(\eta))+\mathcal{O}(\delta^3))\vert d\theta d\eta dtds=\mathcal{O}(\delta^2).
\end{align}
From (\ref{divide3part328}), (\ref{oldresult}), (\ref{rest0}) and (\ref{eqn340new}), it's easy to get (\ref{C3result}), the proof is completed.
\end{proof}

\subsection{The proof of Theorem \ref{rectangleright}}\label{append4}

\begin{proof}
    We assuming that the number of vertices is n, the angle whose vertex $\mathbf{a}$ is the vertex of polygon $P$ is $\theta_a$ and the point $\mathbf{p}\in\Gamma(r)$ can be parameterized by arc length as $\mathbf{p(\eta)}=(x(\eta),y(\eta)),~(0\leq\eta\leq\beta)$. Let's first divide the boundary to the following four parts:
    \begin{equation}
    \left\{
    \begin{aligned}
        &\Gamma_{1\delta r}=\{\mathbf{p|\mathbf\{p}~ is~the~vertex~of~polygon\},\\
        &\Gamma_{2\delta r}=\{\mathbf{p}|\mathbf{p}\in\Gamma_r, \mathbf{P}\in \Gamma_{1\delta r}, \theta_P\geq \pi/2, dist(\mathbf{p},\mathbf{P})< \delta\},\\
        &\Gamma_{3\delta r}=\{\mathbf{p}|\mathbf{p}\in\Gamma_r, \mathbf{P}\in \Gamma_{1\delta r}, \theta_P< \pi/2, dist(\mathbf{p},\mathbf{P})< \delta/\sin(\theta_P)\},\\
        &\Gamma_{4\delta r}=\Gamma_r-\Gamma_{1\delta r}-\Gamma_{2\delta r}-\Gamma_{3\delta r}.
    \end{aligned}
    \right.
    \end{equation}
    Fixing $\delta$ is much smaller than the length of the sides of the region 
    without loss of generality, let $\Omega$ be an open region. It is readily shown that the range of mapping $V_n$ is totally the same with the half-plane situation if the intersection point $\mathbf{p}$ lies in $\Gamma_{4\delta r}$. For $\mathbf{p} \in \Gamma_{1\delta r}$, assume the outer normal vector coincides with the angle bisector at vertex $\mathbf{p}$. There exists an angle $\theta_p < \pi/2$ such that $g$ is bijective when $\theta$ ranges over $(-\theta_p, \theta_p)$. Consequently, the inner product $(\N_{\Omega}^- u, \D_\Omega^-v)_{\Omega_\gamma^+}$ can be decomposed into four components:
\begin{align}
&(\N_{\Omega}^- u,\D_\Omega^-v)_{\Omega_\gamma^+}\nonumber\\
&=\int_{s=0}^{\delta}\int_{t=0}^{\delta-s}\int_{\mathbf{p}\in\Gamma_{4\delta r}} \int_{\theta=-\frac{\pi}{2}}^{\frac{\pi}{2}}\left[(s+t)  G_c(\theta,\mathbf{p})+\mathcal{O}(\delta^2)\right]\widetilde{\gamma}_\delta(s+t)\det(J)
(v(\mathbf{p})+\mathcal{O}(\delta))  d\theta d\mathbf{p} dtds\nonumber\\&\quad+
\int_{s=0}^{\delta}\int_{t=0}^{\delta-s}\int_{\mathbf{p}\in\Gamma_{1\delta r}} \int_{\theta=-\theta_p}^{\theta_p}\left[(s+t)  G_c(\theta,\mathbf{p})+\mathcal{O}(\delta^2)\right]\widetilde{\gamma}_\delta(s+t)\det(J)
(v(\mathbf{p})+\mathcal{O}(\delta))  d\theta d\mathbf{p} dtds \nonumber\\& \quad+
(\int_{\theta=-\frac{\pi}{2}}^{\frac{\pi}{2}}\int_{\mathbf{p}\in\Gamma_{2\delta r}} \int_{s=0}^{\delta}\int_{t=0}^{t_{up}}\left[(s+t)  G_c(\theta,\mathbf{p})+\mathcal{O}(\delta^2)\right]\widetilde{\gamma}_\delta(s+t)\det(J)
(v(\mathbf{p})+\mathcal{O}(\delta))  d\theta d\mathbf{p} dtds\nonumber\\& \quad+
(\int_{\theta=-\frac{\pi}{2}}^{\frac{\pi}{2}}\int_{\mathbf{p}\in\Gamma_{2\delta r}} \int_{s=0}^{\delta}\int_{t=0}^{t_{up}}\left[(s+t)  G_c(\theta,\mathbf{p})+\mathcal{O}(\delta^2)\right]\widetilde{\gamma}_\delta(s+t)\det(J)
(v(\mathbf{p})+\mathcal{O}(\delta))  d\theta d\mathbf{p} dtds,
\end{align}
where $G_c(\theta,\mathbf{p})=V_c(\theta,\mathbf{p})^T\cdot \nabla u(\mathbf{p})$ $t_{up}=\min\{\delta-s,tmax(\mathbf{p},\theta)\}$, $tmax=\mathop{\sup}\limits_{t} \{\mathbf{p}(\eta)-tV_c(\theta,\eta) \in \Omega_{\gamma}^- \in \Omega_\gamma^+\}$. As for the second integration part, because the measurement of $\Gamma_{1\delta r}$ equals zero, the result of integration equals zero. Comparing the first integration  with the variable substitution equation (\ref{defweakformtaylor}) in Theorem \ref{oneonone}, it has that:
\begin{align}
\int_{s=0}^{\delta}\int_{t=0}^{\delta-s}&\int_{\mathbf{p}\in\Gamma_{4\delta r}} \int_{\theta=-\frac{\pi}{2}}^{\frac{\pi}{2}}\left[(s+t)  G_c(\theta,\mathbf{p})+\mathcal{O}(\delta^2)\right]\widetilde{\gamma}_\delta(s+t)\det(J)
(v(\mathbf{p})+\mathcal{O}(\delta))  d\theta d\mathbf{p} dtds\nonumber\\&=(\sigma\frac{\partial u}{\partial\mathbf{n}},v)_{\Gamma_{4\delta r}}+\mathcal{O}(\delta).
\end{align}
Together with (\ref{defweakformtaylor}), (\ref{enestresult}) and (\ref{enestresult}),  we can obtain that:
\begin{align}\label{difference_rec}
    &\vert\langle\sigma\partial_\mathbf{n} u,v\rangle_{\Gamma_r}-
    (\N_{\Omega}^- u,\D_\Omega^-v)_{\Omega_\gamma^+}\vert\nonumber\\&
    \leq
    |\int_{\theta=-\frac{\pi}{2}}^{\frac{\pi}{2}}\int_{p\in\Gamma_{2\delta r}} \int_{s=0}^{\delta}\int_{t=t_{up}}^{\delta-s}\left[(s+t)  G_c(\theta,\mathbf{p})+\mathcal{O}(\delta^2)\right]\widetilde{\gamma}_\delta(s+t)\det(J)
(v(\mathbf{p})+\mathcal{O}(\delta))  d\theta d\mathbf{p} dtds
+\mathcal{O}(\delta)|
\nonumber\\&\quad+|\int_{\theta=-\frac{\pi}{2}}^{\frac{\pi}{2}}\int_{p\in\Gamma_{3\delta r}} \int_{s=0}^{\delta}\int_{t=t_{up}}^{\delta-s}\left[(s+t)  G_c(\theta,\mathbf{p})+\mathcal{O}(\delta^2)\right]\widetilde{\gamma}_\delta(s+t)\det(J)
(v(\mathbf{p})+\mathcal{O}(\delta))  d\theta d\mathbf{p} dtds
+\mathcal{O}(\delta)|.
\end{align}
Next, we analyze the first term at the right hand side of the inequality. Substituting $t+s$ with $r$, we have:
\begin{align}\label{eqn349new}
    |&\int_{\theta=-\frac{\pi}{2}}^{\frac{\pi}{2}}\int_{p\in\Gamma_{2\delta r}} \int_{s=0}^{\delta}\int_{t=t_{up}}^{\delta-s}\left[(s+t)  G_c(\theta,\mathbf{p})+\mathcal{O}(\delta^2)\right]\widetilde{\gamma}_\delta(s+t)\det(J)
(v(\mathbf{p})+\mathcal{O}(\delta))  d\theta d\mathbf{p} dtds
+\mathcal{O}(\delta)|\nonumber\\&\leq 
|\int_{\theta=-\frac{\pi}{2}}^{\frac{\pi}{2}}\int_{p(\eta)\in\Gamma_{2\delta r}} \int_{r=t_{up}}^{\delta}\left[r^2(r-t_{up})  G_c(\theta,\mathbf{p})+\mathcal{O}(\delta^3)\right]\widetilde{\gamma}_\delta(r)\cos(\theta)\sqrt{(x^\prime(\eta))^2+(y^\prime(\eta))^2}\nonumber\\&\quad
\times(v(\mathbf{p})+\mathcal{O}(\delta))  d\theta d\mathbf{p} dtds
+\mathcal{O}(\delta)|.
\end{align}
 Due to the boundedness of $\Omega$ and the regularity $u \in C^3(\Omega^+)$, $v \in C^2(\Omega^+)$, we impose the a priori bounds $|\partial^\alpha u(\mathbf{x})| \leq C_u$ for $|\alpha| \leq 2$, $\mathbf{x}\in\Omega^+$ and $|\partial^\alpha v(\mathbf{x})| \leq C_v$ for $|\alpha| \leq 1$, $\mathbf{x}\in\Omega^+$. From the definition of $V_c(\theta,\mathbf{p})^T$, it has that:
 \begin{align}
     |\cos(\theta)G_c(\theta,\mathbf{p})|\leq2C_u.
 \end{align}
 Then, (\ref{eqn349new}) turns out to be:
 \begin{align}\label{eqn349neww}
    |&\int_{\theta=-\frac{\pi}{2}}^{\frac{\pi}{2}}\int_{p\in\Gamma_{2\delta r}} \int_{s=0}^{\delta}\int_{t=t_{up}}^{\delta-s}\left[(s+t)  G_c(\theta,\mathbf{p})+\mathcal{O}(\delta^2)\right]\widetilde{\gamma}_\delta(s+t)\det(J)
(v(\mathbf{p})+\mathcal{O}(\delta))  d\theta d\mathbf{p} dtds
+\mathcal{O}(\delta)|\nonumber\\&\leq 
\int_{r=t_{up}}^{\delta}\int_{p(\eta)\in\Gamma_{2\delta r}}(2\pi C_ur^2(r-t_{up})+\mathcal{O}(\delta^3))(C_v+\mathcal{O}(\delta))\widetilde{\gamma}_\delta(r)dpdr+\mathcal{O}(\delta)\nonumber\\&\leq L(\Gamma_{2\delta r})(2\pi C_uC_v\int_{r=t_{up}}^{\delta}r^2(r-t_{up})\widetilde{\gamma}_\delta(r)dr+\mathcal{O}(\delta))+\mathcal{O}(\delta)\nonumber\\&\leq 2n\delta (2\pi C_uC_v\sigma+\mathcal{O}(\delta))+\mathcal{O}(\delta)\leq C\delta.
\end{align}
Totally the same, as for the second term at the right hand side of the inequality (\ref{difference_rec}), if $\Gamma_{3\delta r}\neq \emptyset$, then, it has the following estimation:
\begin{align}\label{eqn352new}
    |&\int_{\theta=-\frac{\pi}{2}}^{\frac{\pi}{2}}\int_{p\in\Gamma_{3\delta r}} \int_{s=0}^{\delta}\int_{t=t_{up}}^{\delta-s}\left[(s+t)  G_c(\theta,\mathbf{p})+\mathcal{O}(\delta^2)\right]\widetilde{\gamma}_\delta(s+t)\det(J)
(v(\mathbf{p})+\mathcal{O}(\delta))  d\theta d\mathbf{p} dtds
+\mathcal{O}(\delta)|\nonumber\\&\leq 2\frac{\delta}{\sin(C_p)} (2\pi C_uC_v\sigma+\mathcal{O}(\delta))+\mathcal{O}(\delta)\leq C\delta.
\end{align}
From (\ref{difference_rec}), (\ref{eqn349neww}) and (\ref{eqn352new}), it has that:
\begin{equation}
     (\N_{\Omega}^- u,\D_\Omega^-v)_{\Omega_\gamma^+}=\langle\sigma\partial_\mathbf{n} u,v\rangle_{\Gamma_r}+\mathcal{O}(\delta).
\end{equation}
The proof has completed.
\end{proof}

\printbibliography

\end{document}